\tikzset{my_dot/.style={fill, circle, inner sep=0pt,minimum size=3pt}}
\tikzset{my_node/.style={fill, circle, inner sep=0pt,minimum size=3pt}}
\tikzset{inv/.style={fill, circle, inner sep=0pt,minimum size=0pt}}
\newtheorem{theorem}{Theorem}[section]
\newtheorem{definition}[theorem]{Definition}
\newtheorem{proposition}[theorem]{Proposition}
\newtheorem{lemma}[theorem]{Lemma}
\newtheorem{corollary}[theorem]{Corollary}
\newtheorem{claim}[theorem]{Claim}
\theoremstyle{definition}
\newtheorem{remark}[theorem]{Remark}
\newtheorem{example}[theorem]{Example}
\def\A{\ensuremath{\mathbb{A}}}
\def\C{\ensuremath{\mathbb{C}}}
\def\G{\ensuremath{\mathbb{G}}}
\def\Q{\ensuremath{\mathbb{Q}}}
\def\R{\ensuremath{\mathbb{R}}}
\def\Z{\ensuremath{\mathbb{Z}}}
\def\cA{\ensuremath{\mathcal{A}}}
\def\cD{\ensuremath{\mathcal{D}}}
\def\cM{\ensuremath{\mathcal{M}}}
\def\cX{\ensuremath{\mathcal{X}}}
\def\cY{\ensuremath{\mathcal{Y}}}
\DeclareMathOperator{\Aut}{Aut}
\DeclareMathOperator{\colim}{colim}
\DeclareMathOperator{\Gr}{Gr}
\DeclareMathOperator{\Hom}{Hom}
\DeclareMathOperator{\Mod}{Mod}
\DeclareMathOperator{\Out}{Out}
\DeclareMathOperator{\res}{res}
\DeclareMathOperator{\SL}{SL}
\DeclareMathOperator{\val}{val}
\def\col{\colon}
\def\Dgn{\Delta_{g,n}}
\def\Dn{\Delta_{1,n}}
\def\Gmw{{\bf G}}
\def\inj{\mathrm{inj}}
\def\Mgn{M^{\trop}_{g,n}} 
\def\ocM{\overline{\cM}}
\def\ocX{\overline{\cX}}
\def\ov{\overline}
\def\rep{\Delta^{\mathrm{rep}}_{g,n}}
\def\trop{\mathrm{trop}}
\newcommand{\double}{\genfrac..{0pt}1
{\raise -2pt\hbox{$\scriptstyle\longrightarrow$}}{\raise 4pt\hbox
{$\scriptstyle\longrightarrow$}}}
\newcommand{\FFF}[1]{\widetilde{D}([#1])}
\newcommand{\FFFF}[2]{\widetilde{D}_{#1}([#2])}
\newcommand{\EE}{\widetilde{D}}
\newcommand{\EEE}[1]{\widetilde{D}^{[#1]}}
\newcommand{\EEEE}[2]{\widetilde{D}_{#1}^{[#2]}}
\begin{document}

\title[The tropicalization of the moduli space of curves II]{The tropicalization of the moduli space of curves II: Topology and applications}

\author{Melody Chan}
\email{mtchan@math.brown.edu}

\author{S{\o}ren Galatius}
\email{galatius@stanford.edu}

\author{Sam Payne}
\email{sam.payne@yale.edu}

\begin{abstract}
We study the topology of the tropical moduli space parametrizing stable tropical curves of genus $g$ with $n$ marked points in which the bounded edges have total length $1$, and prove a bound on its connectivity. Using the identification of this space with the dual complex of the boundary in the moduli space of stable algebraic curves, we give a simple expression for the top weight cohomology of $\cM_{1,n}$ as a representation of the symmetric group and describe an explicit dual basis in homology consisting of abelian cycles for the pure mapping class group.
 \end{abstract}

\maketitle

\tableofcontents

\section{Introduction}

In this paper, we study the topology of a space $\Dgn$ that parametrizes isomorphism classes of certain vertex-weighted metric graphs with marked points.  The space $\Dgn$ has several natural geometric interpretations: 

\begin{enumerate}
\item as the subspace of the tropical moduli space $\Mgn$ parametrizing tropical curves of genus $g$ with $n$ marked points and total bounded edge length 1;

\item in the absence of marked points, as the topological quotient of the simplicial completion of Culler--Vogtmann outer space, as described in \cite[\S 2.2]{Vogtmann15}, by the action of the outer automorphism group $\Out(F_g)$; 

\item as the topological quotient of Harer's complex of curves on an $n$-punctured surface of genus $g$ by the action of the pure mapping class group; and

\item as the dual complex of the boundary divisor in the algebraic moduli space $\ocM_{g,n}$.  
\end{enumerate}

Combining these interpretations is particularly fruitful.  For instance, using interpretations (1) and (4), we describe and study interesting subcomplexes of the boundary complex of $\ocM_{g,n}$ as parameter spaces for tropical curves with specified properties, such as the property of having {\em repeated markings} (see \S\ref{sec:contractibility}).
This identification also endows the topological invariants of $\Dgn$ with interpretations as algebraic invariants of $\cM_{g,n}$.  For instance, important to our applications is the fact that the reduced rational homology of $\Dgn$ is canonically identified with the top graded piece of the weight filtration on the rational cohomology of the complex moduli space $\cM_{g,n}$.  (This is one instance of a general identification that we recall in \S\ref{sec:topweight} and the Appendix).  This identification facilitates computations such as Theorem~\ref{thm:cohomology}, which we prove using the contractibility of the repeated marking locus in $\Dgn$ (Theorem~\ref{thm:contractible}).  

As another example, using the weight spectral sequence on the logarithmic de Rham complex for $\cM_{g,n}$ and its degeneration at $E_2$, we describe all relations among abelian cycles for the pure mapping class group in middle homological degree in terms of coboundaries on $\Dgn$.  See Theorem~\ref{thm:abelianrelations}.

We now discuss five of our main results, starting with three about the topology of $\Dgn$.  
 
\begin{theorem}\label{thm:contractible}
For all $g>0$ and $n>1$, the subcomplex of $\Dgn$ parametrizing tropical curves with repeated markings is contractible.
\end{theorem}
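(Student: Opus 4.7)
The plan is to construct an explicit deformation retraction of $\rep$ onto a single $0$-cell $[C_0]$, where $C_0$ is the stable tropical curve with a single vertex $v_0$ of weight $g-1$, one self-loop of length $1$, and all $n$ markings attached to $v_0$. Stability holds since $2(g-1)+2+n \geq 4$ when $g \geq 1$ and $n \geq 2$, and $[C_0]$ lies in $\rep$ because the $n \geq 2$ markings share the vertex $v_0$. Note that the total weight bookkeeping works: contracting any cell down to $C_0$ transfers the $h^1$ contributions and external vertex weights so that the final vertex weight is exactly $g-1$.

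For each $C = [G,\ell,m] \in \rep$, the homotopy uses a canonical choice of distinguished vertex $v^*(G) \in V(G)$: the vertex carrying the lexicographically smallest pair of markings. I would define the contracting homotopy $H_t(C)$ as a canonical path in $\rep$ from $C$ to $[C_0]$. Broadly: contract all edges of $G$ not lying on a canonical cycle through $v^*$, merging all other vertices into $v^*$, which absorbs their weights and markings, and then collapse the chosen cycle to a single self-loop at the merged vertex. When $G$ has no cycle through $v^*$, for example if $G$ is a tree with positive weight concentrated at some other vertex $w$, the path must first enter a higher-dimensional adjacent cell by splitting a self-loop off of $w$ at the cost of one unit of weight, then slide this loop onto $v^*$ by contracting the connecting bridge. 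At every stage $v^*$ continues to carry the designated lex-smallest pair of markings, so the homotopy stays inside $\rep$.

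The main obstacle is to ensure that $H_t$ is continuous across the stratification of $\rep$, since the combinatorial choices (of $v^*$, of the canonical cycle, and of where to split off a loop) may change discontinuously at cell boundaries. My plan for handling this cleanly is to recast the argument in the framework of discrete Morse theory on the symmetric $\Delta$-complex $\rep$. Concretely, I would define an acyclic matching on the cells of $\rep$ pairing every cell $\sigma \neq [C_0]$ either with a codimension-one face, obtained by contracting a canonical edge of $G$ away from $v^*$, or with a codimension-one coface, obtained by splitting a self-loop off of $v^*$ when $v^*$ carries positive weight and has no self-loop. Acyclicity is then verified by exhibiting a strict monovariant, such as the sum of weights at vertices other than $v^*$ plus the number of non-loop edges of $G$; each match strictly decreases this quantity, ruling out closed gradient cycles. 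Since $[C_0]$ is the unique critical cell, the discrete Morse theorem for symmetric $\Delta$-complexes yields that $\rep$ is homotopy equivalent to a point, which is the claim.
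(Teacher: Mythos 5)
Your proposal takes a genuinely different route from the paper. The paper filters $\rep$ by the number of core edges, writing $\rep = \Delta_{3g-3+n} \supseteq \cdots \supseteq \Delta_0$, and constructs an explicit strong deformation retraction of each $\Delta_i$ onto $\Delta_{i-1}$ by uniformly shrinking all core edges and redistributing the lost length onto ``distinguished bridges'' (the unique non-core edges at core vertices supporting two or more markings); $\Delta_0$ is then contracted by growing the edge incident to the weight-$g$ vertex. This gives a continuous homotopy directly, without discrete Morse theory, and avoids all of the subtleties of matchings on cells of a generalized $\Delta$-complex.

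Your discrete-Morse approach is plausible in spirit, and a version of discrete Morse theory for symmetric $\Delta$-complexes does exist, but as written there are concrete gaps. First, the monovariant does not strictly decrease under the matching: if the ``canonical edge away from $v^*$'' is a loop at a vertex $u \neq v^*$, then contracting it raises $w(u)$ by $1$ while leaving the number of non-loop edges unchanged, so the monovariant \emph{increases} from the cell to its matched face. Second, the matching is incomplete: consider the graph with a single vertex of weight $0$ carrying all $n$ markings and $g$ loops (stable, in $\rep$, a $(g-1)$-cell). It has no non-loop edges to contract, and $v^*$ has weight $0$, so neither of your two rules applies; for $g > 1$ this cell is not $[C_0]$, so you would have at least two critical cells. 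Third, ``contracting a canonical edge'' is not well-defined: when $\Aut(\Gmw)$ acts nontrivially on the set of candidate edges, there is no canonical $\Aut$-invariant choice, and a matching on a generalized $\Delta$-complex must be compatible with these stabilizers (this is precisely the hypothesis that makes the symmetric version of discrete Morse theory nontrivial). Finally, even granting an acyclic matching, the conclusion ``homotopy equivalent to a point'' for a generalized $\Delta$-complex, whose cells are quotients $\Delta^p/H_x$, requires the non-critical matched pairs to be \emph{regular} pairs in the underlying CW structure; you would need to verify this in the presence of loop-flip automorphisms, which arise every time you split off a self-loop. Any one of these would need to be resolved before the argument is complete.
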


\noindent  As one application of the contractibility of this modular subcomplex, we determine the homotopy type of $\Delta_{1,n}$, as follows.
 
\begin{theorem} \label{thm:boundary}  Both $\Delta_{1,1}$ and $\Delta_{1,2}$ are contractible.  For $n \geq 3$, the tropical moduli space $\Dn$ is homotopy equivalent to a wedge sum of $(n-1)! / 2$ spheres of dimension $n-1$.
\end{theorem}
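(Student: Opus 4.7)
The plan is to apply Theorem~\ref{thm:contractible} to the repeated-markings subcomplex $R \subset \Delta_{1,n}$ and classify the cells lying outside $R$. The case $n=1$ falls outside the scope of Theorem~\ref{thm:contractible} but is handled by direct enumeration: the unique stable genus-$1$ tropical curve with one marked point and at least one bounded edge is a weight-$0$ vertex bearing a loop of length $1$ and one leg, so $\Delta_{1,1}$ is a single point.

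For $n \geq 2$, Theorem~\ref{thm:contractible} together with the cofibration $R \hookrightarrow \Delta_{1,n}$ of CW complexes gives $\Delta_{1,n} \simeq \Delta_{1,n}/R$. I would then classify the cells not in $R$: these correspond to stable weighted marked graphs $G$ of genus $1$ with $n$ markings at $n$ distinct vertices, so $V(G) \geq n$. The genus decomposition $1 = b_1(G) + \sum_v w(v)$ splits into two cases. In the weight-zero case ($b_1 = 1$), stability gives $\sum_v \val(v) \geq 3V$, while the handshake identity gives $\sum_v \val(v) = 2E + n = 2V + n$; hence $V = n$, the graph is trivalent, and each vertex carries exactly one leg. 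The graph-edge subgraph is then $2$-regular, hence a disjoint union of cycles, and connectedness pins this down to a single $n$-cycle. In the tree case ($b_1 = 0$, one weight-$1$ vertex), the analogous count forces the weight-$1$ vertex to be the unique leaf, contradicting the fact that a tree on $\geq 2$ vertices has $\geq 2$ leaves. Thus the only cells of $\Delta_{1,n}$ outside $R$ are $n$-cycles carrying a bijective labeling of the vertices by the $n$ markings.

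For $n \geq 3$, each such labeled $n$-cycle has trivial automorphism group (distinct labels rigidify the dihedral action), so its cell is a genuine $(n-1)$-simplex; contracting any cycle edge merges two distinctly-marked vertices and produces a repeated marking, so the entire boundary of each simplex lies in $R$. Counting labeled $n$-cycles modulo dihedral symmetry gives $n!/(2n) = (n-1)!/2$ such cells. Collapsing $R$ to a point therefore yields $(n-1)!/2$ disjoint $(n-1)$-simplices with all boundaries identified to one basepoint, which is $\bigvee_{(n-1)!/2} S^{n-1}$. For $n=2$, the unique non-$R$ cell is the $2$-cycle with markings $\{1,2\}$, whose automorphism group $\Z/2$ swaps the two parallel edges; the quotient cell is a closed interval with only one endpoint (the contracted-edge degeneration) in $R$, so after collapsing $R$ the space is a contractible interval and $\Delta_{1,2}$ is contractible. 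The main obstacle is the combinatorial classification of non-$R$ cells sketched above; once it is established, the rest is a routine wedge-of-cofibers computation.
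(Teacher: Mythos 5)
Your proof is correct and follows essentially the same strategy as the paper's: contract $\Delta^{\mathrm{rep}}_{1,n}$ using Theorem~\ref{thm:contractible}, show that the only stable combinatorial types outside the repeated-marking locus are $n$-cycles with the $n$ markings placed at $n$ distinct vertices, observe that contracting any cycle edge lands in $\Delta^{\mathrm{rep}}_{1,n}$, and conclude that the quotient is a wedge of $(n-1)!/2$ spheres (handling $n=2$ via the order-two edge-swap automorphism, and $n=1$ directly). Where the paper rules out weight-one vertices and pendant trees by a structural argument on the core (a non-core leaf of weight zero necessarily carries at least two markings), you reach the same classification by a valence-counting inequality combined with the handshake identity; this is a minor variation within the same overall approach, and both are valid.
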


\noindent An analogous result for $g = 0$ is well-known.  The moduli space $\Delta_{0,n}$ parametrizing trees with $n$ marked leaves is homotopic to a wedge sum of $(n-2)!$ spheres of dimension $n-4$ \cite{Vogtmann90}.  Note, however, that $\Dgn$ is not homotopic to a wedge sum of top-dimensional spheres in general.  The complex $\Delta_{2,n}$ can have rational homology in both of the top two degrees \cite[Theorem 1.4]{Chan15}, and can also have torsion in its integral homology \cite[forthcoming]{Chan15}.

For $g > 1$, the contractibility of the repeated marking locus is not enough to determine the homotopy type of the boundary complex.  Nevertheless, using Theorem~\ref{thm:contractible}, we can deduce that the boundary complex is highly connected for large $n$.  Recall that a topological space is $k$-connected if it is path-connected and its homotopy groups $\pi_i$ vanish for $1 \leq i \leq k$.

\begin{theorem}  \label{thm:connected}
For $g>1$, the tropical moduli space $\Dgn$ is $(n - 5g + 4)$-connected.
\end{theorem}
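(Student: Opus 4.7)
The plan is to combine Theorem~\ref{thm:contractible} with a straightforward cell-dimension count on the pair $(\Dgn, \rep)$. Since $\rep$ is contractible, the long exact sequence in homotopy of the pair yields $\pi_k(\Dgn) \cong \pi_k(\Dgn, \rep)$ for all $k \geq 1$ and also shows that $\Dgn$ is path-connected. Thus it suffices to show that $(\Dgn, \rep)$ is $(n-3)$-connected, i.e.\ that every relative cell of dimension at most $n-2$ can be pushed into $\rep$.

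The key combinatorial input is a lower bound on the dimension of cells outside $\rep$. Every cell of $\Dgn$ corresponds to an isomorphism class of stable vertex-weighted marked graphs $G$ of genus $g$ with $n$ markings, and its dimension equals $|E(G)| - 1$. Such a cell lies outside $\rep$ precisely when the $n$ markings are on pairwise distinct vertices of $G$, which forces $|V(G)| \geq n$. Since $G$ is connected of first Betti number $g$,
\[
|E(G)| \;=\; |V(G)| + g - 1 \;\geq\; n + g - 1 \;\geq\; n,
\]
so the cell has dimension at least $n - 1 \geq n - 2$.

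By a standard cellular approximation argument applied to the relative structure on $(\Dgn, \rep)$, this dimension bound implies $\pi_k(\Dgn, \rep) = 0$ for $k \leq n - 2$, so that in fact $\Dgn$ is even $(n-2)$-connected, which is strictly stronger than the claim. The main subtlety I anticipate is that $\Dgn$ is naturally a symmetric $\Delta$-complex — simplices are quotiented both by graph automorphisms and by the $S_n$-action permuting markings — rather than a literal CW complex. To justify the cellular approximation step one should either pass to an honest simplicial structure (for instance the barycentric subdivision of the symmetric $\Delta$-complex), which preserves dimensions of cells and the subcomplex $\rep$, or appeal directly to a version of relative cellular approximation valid for symmetric $\Delta$-pairs; either way, the dimension bound above translates into the stated connectivity of the pair, and hence of $\Dgn$.
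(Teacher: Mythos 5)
There are two genuine gaps here, one in the dimension count and one in the cellular approximation step.

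The dimension count is wrong. The genus of a weighted graph $(G,w)$ is $g(G,w) = b_1(G) + \sum_v w(v)$, so $b_1(G) \leq g$ with equality only when all vertex weights vanish; you have silently assumed $b_1(G) = g$. From $|V(G)| \geq n$ and $b_1(G) \geq 0$ all you get is $|E(G)| = |V(G)| - 1 + b_1(G) \geq n-1$, i.e.\ cells of dimension at least $n-2$, not $n-1$. This is sharp: for $g = 2$, $n = 4$, the path $v_1-v_2-v_3-v_4$ with one marking on each vertex and weights $(1,0,0,1)$ is a stable, nonrepeating genus-$2$ graph with only $3 = n-1$ edges. Your claimed $(n-2)$-connectivity therefore does not follow; the authors explicitly state they do not know whether $\Dgn$ is more than $(n-3)$-connected for $g > 2$, which should have been a warning sign.

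More seriously, even with the corrected bound, the cellular approximation step is not justified by either structure you propose. Barycentric subdivision does make $\Dgn$ a simplicial complex with $\rep$ a subcomplex, but the simplices of $\text{sd}\,\Dgn$ outside $\text{sd}\,\rep$ include many of low dimension---e.g.\ the barycenter of each cell $\sigma^1(\Gmw)$ with $\Gmw$ nonrepeating is a $0$-simplex not in $\rep$---so the pair has relative cells in every dimension and cellular approximation yields nothing. Working directly with the generalized $\Delta$-complex structure does not help either: the ``cells'' outside $\rep$ are quotients $\Delta^p/H_x$ attached along $\partial\Delta^p/H_x$, and such pairs need not be $(p-1)$-connected (e.g.\ $\Z/5 \leq S_5$ acts freely on $\partial\Delta^4 \cong S^3$, with lens-space quotient having $\pi_1 = \Z/5$). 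The paper resolves exactly this issue with a genuinely new idea: it defines \emph{identifiability classes} of edges of a nonrepeating $\Gmw$ and constructs a CW structure on $\Dgn/\rep$ whose cells are polytopes cut out by weak orderings within each class (a coarsening of the barycentric subdivision), then proves by a spanning-tree argument that each nonrepeating $\Gmw$ has at least $n-1$ identifiability classes, so all positive-dimensional cells of $\Dgn/\rep$ have dimension $\geq n-2$. That construction, not the raw edge count, is the heart of the proof.
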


\noindent  The bound given by Theorem~\ref{thm:connected} is not sharp in general.  For instance, the first author has shown that $\Delta_{2,n}$ is $n$-connected \cite[Theorem~1.2]{Chan15}.  


We now discuss two applications to the topology of the complex moduli space $\cM_{g,n}$, in the forms of Theorems~\ref{thm:cohomology} and~\ref{thm:dualbasis} below.  First, as noted above, the reduced rational homology of $\Dgn$ is canonically identified with the top weight cohomology of $\cM_{g,n}$, so Theorem~\ref{thm:boundary} determines the top weight cohomology of $\cM_{1,n}$ as a $\Q$-vector space.  With a little more care, we can work $S_n$-equivariantly and compute this top weight cohomology as a representation of the symmetric group, as follows.

\begin{theorem} \label{thm:cohomology}
For each $n\ge 1$, the top weight cohomology of $\cM_{1,n}$ is
\[
\Gr_{2n}^W H^i(\cM_{1,n}; \Q) \cong \left \{ \begin{array}{ll} \Q^{(n-1)!/2} & \mbox{ \ \ for $n \geq 3$ and $i = n$,} \\	
 												0 & \mbox{ \ \ otherwise.} \end{array} \right.
\]
Moreover, for each $n\ge 3$, the representation of $S_n$ on $\Gr^W_{2n} H^n(\mathcal{M}_{1,n}; \Q)$ induced by permuting marked points is
\[
\mathrm{Ind}_{D_n,\phi}^{S_n} \, \mathrm{Res}^{S_n}_{D_n,\psi} \, \mathrm{sgn}.
\] 
\noindent Here $\phi\colon D_n \rightarrow S_n$ is the dihedral group of order $2n$ acting on the vertices of an $n$-gon, $\psi\colon D_n \rightarrow S_n$ is the action of the dihedral group on the edges of the $n$-gon, and $\mathrm{sgn}$ denotes the sign representation of $S_n$.   
 \end{theorem}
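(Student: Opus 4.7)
My plan is to combine Theorem~\ref{thm:boundary} with the $S_n$-equivariant identification
\[
\Gr^W_{2n} H^i(\cM_{1,n};\Q) \ \cong\ \widetilde{H}_{i-1}(\Dn;\Q)
\]
recalled in Section~\ref{sec:topweight} and the Appendix. By Theorem~\ref{thm:boundary} the reduced rational homology of $\Dn$ is concentrated in degree $n-1$ and has dimension $(n-1)!/2$ for $n \geq 3$, which gives the Betti number calculation. The remaining task is to compute $\widetilde{H}_{n-1}(\Dn;\Q)$ as an $S_n$-representation and match it against $\mathrm{Ind}_{D_n,\phi}^{S_n} \mathrm{Res}^{S_n}_{D_n,\psi} \mathrm{sgn}$.

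By Theorem~\ref{thm:contractible} the repeated-marking subcomplex $\Delta_{1,n}^{\mathrm{rep}}\subseteq \Dn$ is contractible, so $\widetilde{H}_*(\Dn;\Q) \cong \widetilde{H}_*(\Dn/\Delta_{1,n}^{\mathrm{rep}};\Q)$ as $S_n$-modules. Next I would classify the top-dimensional cells of $\Dn$ not lying in $\Delta_{1,n}^{\mathrm{rep}}$: a cell has dimension $n-1$ if and only if its underlying stable weighted graph has $n$ edges. Combining the stability inequality $2 w_v + \deg v + n_v \geq 3$ with the identity $1 = \sum_v w_v + b_1$ and summing over vertices forces $V \leq n$, with equality giving $\sum_v w_v = 0$, $b_1=1$, and $\deg v + n_v = 3$ at every vertex. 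Forbidding repeated markings ($n_v \leq 1$) then forces $\deg v = 2$ and $n_v=1$, so the graph is an $n$-cycle with the $n$ markings distributed bijectively on its vertices. Since the relative complex has no $n$-cells, its top homology coincides with its top rational chain group.

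Finally, I would identify this chain group with $\mathrm{Ind}_{D_n,\phi}^{S_n} \mathrm{Res}^{S_n}_{D_n,\psi} \mathrm{sgn}$. Let $L$ denote the set of bijective labelings of the vertices of the standard $n$-cycle by $[n]$. Then $L$ carries commuting actions: $S_n$ on the left by relabeling, and $D_n$ on the right through $\phi$, the latter being free for $n \geq 3$, so $|L/D_n| = (n-1)!/2$. An oriented top cell is a quotient of the $(n-1)$-simplex whose vertices index the $n$ edges of the cycle, and a symmetry $d \in D_n$ acts on this orientation line by the sign of its permutation on edges, namely $\chi(d) = \mathrm{sgn}(\psi(d))$. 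Thus the top rational chain group is $\Q[L] \otimes_{\Q[D_n]} \Q_\chi$, and identifying $L \cong S_n$ as an $(S_n,D_n)$-biset through $\phi$ yields $\mathrm{Ind}_{D_n,\phi}^{S_n} \chi$. The main obstacle is the orientation bookkeeping in this last step: one must carefully distinguish the roles of the two homomorphisms $\phi$ (which drives the induction through the permutation action on cells) and $\psi$ (which drives the sign twist through the action on the simplex orientation), and verify that when an $S_n$-relabeling on a fixed cell coincides with a dihedral symmetry it acts on the orientation line precisely by $\chi$.
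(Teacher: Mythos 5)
Your proposal is correct and takes essentially the same route as the paper: both reduce via Proposition~\ref{prop:topweight}, Corollary~\ref{cor_dual_complex}, and Theorem~\ref{thm:contractible} to the $S_n$-action on $\widetilde{H}_{n-1}(\Dn/\Delta_{1,n}^{\mathrm{rep}};\Q)$, identify that space with the span of the cells indexed by $n$-cycles with distinct vertex labels, and then track the sign on each orientation line through the edge-permutation $\psi$ while inducing along $\phi$. Your biset reformulation $\Q[L]\otimes_{\Q[D_n]}\Q_\chi$ is a tidier way to phrase the paper's coset-representative computation, and your reclassification of the top cells via the stability inequality reproves what the paper already extracts from Theorem~\ref{thm:boundary}, but neither is a substantive deviation.
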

   
\noindent Note that the signs of these two permutation actions of the dihedral group are different when $n$ is even.  Reflecting a square across a diagonal, for instance, exchanges one pair of vertices and two pairs of edges.

\begin{remark} \label{rem:genus0}
An analogous result is known for $g = 0$. By \cite{Vogtmann90} and the identification of the top weight cohomology of $\cM_{0,n}$ with the reduced rational homology of $\cM_{0,n}$, the top weight cohomology $\Gr_{2n-6}^W H^{*}(\cM_{0,n}; \Q)$ has rank $(n-2)!$, concentrated in degree $n -3$, for $n \geq 3$, and its character as a representation of $S_n$ is computed in \cite{RobinsonWhitehouse96}.  
\end{remark}

\begin{remark}
The fact that the top weight cohomology of $\cM_{1,n}$ is supported in degree $n$ can also be seen without tropical methods, as follows.  The rational cohomology of a smooth Deligne--Mumford stack agrees with that of its coarse moduli space, and the coarse space $M_{1,n}$ is affine.  To see this, note that $M_{1,1}$ is affine, and the forgetful map $M_{g,n+1} \rightarrow M_{g,n}$ is an affine morphism for $n \geq 1$.  It follows that $M_{1,n}$ has the homotopy type of a $n$-dimensional CW-complex, by \cite{AndreottiFrankel59, Karcjauskas77}, and hence $H^*(\cM_{1,n}; \Q)$ is supported in degrees less than or equal to $n$. The weights on $H^k$ are always between $0$ and $2k$, so the top weight $2n$ can appear only in degree $n$.  

Getzler has calculated an expression for the $S_n$-equivariant Serre characteristic of $\cM_{1,n}$ \cite[(5.6)]{Getzler99}.  Since the top weight cohomology is supported in a single degree, it is determined as a representation by this equivariant Serre characteristic.  Nevertheless, we do not know how to deduce the statement of Theorem~\ref{thm:cohomology} from Getzler's formula.  

On the other hand, Petersen explains that it is possible to adapt the methods from \cite{Petersen14} to recover the fact that the top weight cohomology of $\cM_{1,n}$ has rank $(n-1)!/2$.  He uses the Leray spectral sequence for $\cM_{1,n} \rightarrow \cM_{1,1}$ to determine the weight zero compactly supported cohomology dual to top weight cohomology, decomposing $R^q f_{!} \Q$ as a sum of local systems associated to the symmetric power representations of $\SL_2$, computing which summands appear without Tate twists, and determining the weight zero cohomology of each of these local systems using the Eichler--Shimura isomorphism to compare with spaces of Eisenstein series (personal communication, October 2015).
\end{remark}

\begin{remark}
It is tempting to try to fit the sequence of $S_n$-representations given by $\Gr^W_{2n} H^n(\mathcal{M}_{1,n};\Q)$ into the framework of \emph{representation stability} from \cite{ChurchFarb13}.  However, despite the uniform description of the representations given by Theorem~\ref{thm:cohomology} there does not seem to be any natural $S_n$-equivariant map $H^n(\mathcal{M}_{1,n};\Q) \to H^{n+1}(\mathcal{M}_{1,n+1};\Q)$.  Moreover, as shown in \cite{ChurchFarb13}, representation stability implies polynomial dimension growth, whereas the dimension of $\Gr_{2n}^W H^n(\cM_{1,n}; \Q)$ grows super exponentially with $n$.  

If we fix both $k$ and $g$, then $H_k(\Dgn, \Q)$ vanishes for $n \gg 0$, by Theorem~\ref{thm:connected}.  This vanishing might be seen as an analogue of stabilization with respect to marked points for homology of the pure mapping class group, which says that, for fixed $g$ and $k$, the sequence $H_k(\cM_{g,n}, \Q)$ is representation stable \cite{JimenezRolland11}.\end{remark}

\begin{remark}
The weight filtration is one part of Deligne's mixed Hodge structure on the cohomology of an algebraic variety (or Deligne--Mumford stack).  The full mixed Hodge structure on the cohomology of $\cM_{g,n}$ has been computed in only a few positive genus cases, using stratifications and either exact sequences in cohomology \cite{Looijenga93} or point counting over finite fields \cite{Tommasi07, BergstromTommasi07}.  In this paper, we do not compute any full mixed Hodge structures; instead, we focus on one graded piece of the weight filtration that can be isolated and understood separately.
\end{remark}

The last theorem that we state in the introduction refines Theorem~\ref{thm:cohomology} by producing explicit cycles in $\cM_{1,n}$ whose classes form a dual basis for $\Gr_{2n}^W H^n(\cM_{1,n}; \Q)$, for $n \geq 3$.  To describe these cycles, consider the $(n-1)!/2$ isomorphism classes of marked cycle graphs of length $n$, in which the vertices are labeled by distinct elements of $\{1, \ldots, n\}$.  Each such graph is the dual graph of a loop of $n$ smooth rational curves, with one marked point on each component.  An example is shown in Figure~\ref{f:loop}.  

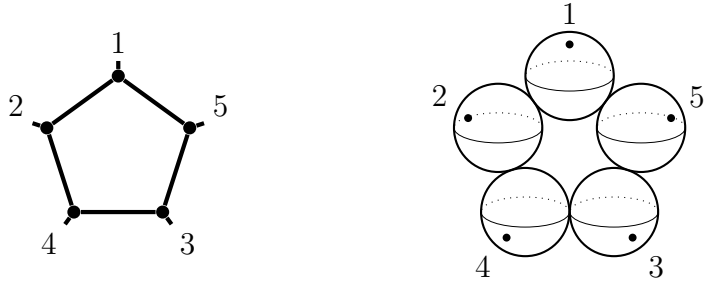
\begin{figure}
\begin{tikzpicture}[my_node/.style={fill, circle, inner sep=1.75pt}, scale=1]
\def\R{.587} 
\def\H{.2} 
\def\Ratio{1.42} 
\def\M{1.2}
\begin{scope}[scale=1]
\node[my_node, label={[label distance=2pt]90:$1$}] (1) at (0,1){};
\node[my_node, label={[label distance=2pt]5:$5$}] (5) at (.95,.31){};
\node[my_node, label={[label distance=2pt]-60:$3$}] (3) at (0.59,-.81){};
\node[my_node, label={[label distance=2pt]240:$4$}] (4) at (-.59,-.81){};
\node[my_node, label={[label distance=2pt]175:$2$}] (2) at (-.95,.31){};
\draw[ultra thick] (1)--(5)--(3)--(4)--(2)--(1);
\draw[ultra thick] (1)--(0, 1*\M);
\draw[ultra thick] (5)--(.95*\M, .31*\M);
\draw[ultra thick] (3)--(.59*\M, -.81*\M);
\draw[ultra thick] (4)--(-.59*\M, -.81*\M);
\draw[ultra thick] (2)--(-.95*\M, .31*\M);
\end{scope}
\begin{scope}[shift= {(6,0)}]
\begin{scope}[shift = {(.95,.31)}, scale=1]
\draw[thick] (0,0) circle (\R);
\draw[thin, dotted] (\R,0) arc [x radius =\R, y radius=\H, start angle = 0, end angle = 180];
\draw[ultra thin]  (-\R,0) arc [x radius =\R, y radius=\H, start angle = 180, end angle = 360];
\end{scope}
\begin{scope}[shift = {(-.95,.31)}, scale=1]
\draw[thick] (0,0) circle (\R);
\draw[thin, dotted] (\R,0) arc [x radius =\R, y radius=\H, start angle = 0, end angle = 180];
\draw[ultra thin]  (-\R,0) arc [x radius =\R, y radius=\H, start angle = 180, end angle = 360];
\end{scope}
\begin{scope}[shift = {(0,1)}, scale=1]
\draw[thick] (0,0) circle (\R);
\draw[thin, dotted] (\R,0) arc [x radius =\R, y radius=\H, start angle = 0, end angle = 180];
\draw[ultra thin]  (-\R,0) arc [x radius =\R, y radius=\H, start angle = 180, end angle = 360];
\end{scope}
\begin{scope}[shift = {(0.59,-0.81)}, scale=1]
\draw[thick] (0,0) circle (\R);
\draw[thin, dotted] (\R,0) arc [x radius =\R, y radius=\H, start angle = 0, end angle = 180];
\draw[ultra thin]  (-\R,0) arc [x radius =\R, y radius=\H, start angle = 180, end angle = 360];
\end{scope}
\begin{scope}[shift = {(-.59,-.81)}, scale=1]
\draw[thick] (0,0) circle (\R);
\draw[thin, dotted] (\R,0) arc [x radius =\R, y radius=\H, start angle = 0, end angle = 180];
\draw[ultra thin]  (-\R,0) arc [x radius =\R, y radius=\H, start angle = 180, end angle = 360];
\end{scope}
\node[my_dot, label={[label distance=2pt]90:$1$}] at (\Ratio*0,\Ratio*1){};
\node[my_dot, label={[label distance=1pt]5:$5$}] at (\Ratio*.95,\Ratio*.31){};
\node[my_dot, label={[label distance=2pt]-60:$3$}] at (\Ratio*0.59,\Ratio*-.81){};
\node[my_dot, label={[label distance=2pt]240:$4$}] at (\Ratio*-0.59,\Ratio*-.81){};
\node[my_dot, label={[label distance=2pt]175:$2$}] at (\Ratio*-.95,\Ratio*.31){};
\end{scope}
\end{tikzpicture}
\caption{A marked graph $\Gmw$ indexing a top-dimensional cell of the boundary complex of $\ov{\mathcal{M}}_{1,5}$, and the stable curve of $\ov{\mathcal{M}}_{1,5}$ dual to it.}
\label{f:loop}
\end{figure}

Notice that these curves are stable, with no automorphisms that preserve the marked points, and the $(n-1)!/2$ geometric points in $\ov \cM_{1,n}$ corresponding to them are a proper subset of the zero-dimensional boundary strata.  In a neighborhood of the $0$-stratum corresponding to a graph $\Gmw$, the pair $(\ov \cM_{1,n},  \partial \ov \cM_{1,n})$ looks locally like $\C^n$ together with its coordinate hyperplanes, so we can embed a real $n$-torus near this point and disjoint from the boundary as the locus
\[
T_{\Gmw, \epsilon} = \{ (z_1, \ldots, z_n) \in \C^n \ | \ |z_i| = \epsilon \mbox{ for all } i \},
\]
for some small positive $\epsilon$.  The homology class of $T_{\Gmw, \epsilon}$ is independent of $\epsilon$ and the choice of coordinates (only its orientation depends on the ordering of the coordinates, which can be fixed in terms of the cyclic ordering of the marked points), so we denote it by $[\mathfrak{t}_{\Gmw}]$.  We remark that the class $[\mathfrak{t}_{\Gmw}]$ is an abelian cycle, in the sense of \cite{BrendleFarb07, ChurchFarb12}, for the pure mapping class group of a genus 1 surface with $n$ marked punctures, coming from an explicit $n$-tuple of commuting Dehn twists.  See Proposition~\ref{prop:abelian}.

As we explain in \S\ref{sec:stackboundary} and the appendix, standard computations with the logarithmic de Rham complex and residues show that the classes $[\mathfrak{t}_\Gmw]$ are orthogonal to $W_{2n-1} H^n (\cM_{1,n}; \Q)$, and hence pair naturally with classes in $\Gr_{2n}^W H^n(\cM_{1,n}; \Q)$.  We then show:

\begin{theorem} \label{thm:dualbasis}
For each $n\ge 3$, the classes $[\mathfrak{t}_\Gmw] \in H_n(\cM_{1,n}; \Q)$ indexed by cycles with $n$ vertices labeled by distinct elements of $\{1, \ldots, n\}$ form a dual basis to $\Gr_{2n}^W H^n(\cM_{1,n}; \Q)$.
\end{theorem}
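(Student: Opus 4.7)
The plan is to compute, up to uniform signs and normalization, the matrix of the pairing between the homology classes $[\mathfrak{t}_\Gmw]$ and the natural cell basis of $\Gr^W_{2n} H^n(\cM_{1,n};\Q) \cong \widetilde H_{n-1}(\Dn;\Q)$, and check that it is diagonal.

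First, I would identify the natural basis on the cohomology side. The dimension of $\Dn$ is $n-1$, and by the stability count in the paragraph preceding the theorem, the top-dimensional cells of $\Dn$ are in bijection with the $(n-1)!/2$ labeled cycle graphs $\Gmw$. In the rational cellular chain complex of $\Dn$, contracting any edge in such a cycle produces a smaller cycle with a repeated marking, hence a cell of $\rep$. By Theorem~\ref{thm:contractible}, $\rep$ is contractible, so $\widetilde H_{n-1}(\Dn;\Q) \cong H_{n-1}(\Dn/\rep;\Q)$ and the differential out of the top chain group vanishes in the quotient. Therefore the top cells $[\sigma_\Gmw]$ descend to a basis of $\widetilde H_{n-1}(\Dn;\Q)$, which, via the canonical isomorphism with $\Gr^W_{2n} H^n(\cM_{1,n};\Q)$ recalled in \S\ref{sec:topweight} and the Appendix, gives the required basis on the cohomology side.

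Next, I would evaluate $\langle [\mathfrak{t}_\Gmw], [\sigma_{\Gmw'}] \rangle$ via Poincar\'e residues. The identification of top-weight cohomology with $\widetilde H_{n-1}(\Dn;\Q)$ is realized through iterated Poincar\'e residues along chains of boundary divisors in $\ov\cM_{1,n}$: a cohomology class dual to $[\sigma_{\Gmw'}]$ is represented by a closed form with logarithmic poles along the $n$ boundary divisors meeting transversally at the zero-stratum $x_{\Gmw'}$, whose iterated residue at $x_{\Gmw'}$ equals $1$ and whose iterated residue at every other zero-stratum vanishes. The torus $T_{\Gmw,\epsilon}$ lies in a small polydisc neighborhood of $x_\Gmw$, disjoint from every other zero-stratum, and by the classical $n$-variable residue theorem, integration of such a log-form over $T_{\Gmw,\epsilon}$ computes $(2\pi i)^n$ times its iterated residue at $x_\Gmw$. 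Combining these,
\[
\langle [\mathfrak{t}_\Gmw],\, [\sigma_{\Gmw'}] \rangle \;=\; \delta_{\Gmw,\Gmw'},
\]
up to a uniform $(2\pi i)^n$ normalization and the consistent sign coming from the cyclic ordering of markings fixed before the statement of the theorem. Since the classes $[\mathfrak{t}_\Gmw]$ are already known to annihilate $W_{2n-1} H^n(\cM_{1,n};\Q)$, they descend to well-defined linear functionals on $\Gr^W_{2n} H^n$, and the above identity shows that these functionals are the dual basis of $\{[\sigma_\Gmw]\}$. This yields the theorem.

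The main obstacle is not any single step but the careful synthesis of three perspectives in the residue calculation: the cellular description of $\widetilde H_{n-1}(\Dn;\Q)$ by top cells, the Poincar\'e-residue realization of the isomorphism with $\Gr^W_{2n} H^n(\cM_{1,n};\Q)$, and the interpretation of the pairing with $T_{\Gmw,\epsilon}$ as integration of a logarithmic form. The bookkeeping of signs and of $2\pi i$ factors, together with the need to formulate everything equivariantly with respect to the cyclic symmetry of the marking, is what demands most care. Once the Appendix provides the clean statement that the tropicalization/residue map is an isomorphism sending top cells to dual Poincar\'e-residue generators, the proof reduces to the transparent residue computation above.
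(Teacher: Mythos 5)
Your proof is correct and follows essentially the same route as the paper: use the contractibility of $\rep$ (Theorem~\ref{thm:contractible}) to identify the top cells of $\Dn$ as a basis for $\widetilde H_{n-1}(\Dn;\Q)\cong \Gr_{2n}^W H^n(\cM_{1,n};\Q)$, then invoke the residue-integral isomorphism (Propositions~\ref{prop:topweight} and~\ref{prop:cyclesandboundaries}) and the Cauchy formula from \S\ref{sec:torus} to show the pairing of $[\mathfrak{t}_\Gmw]$ with the form realizing $[\sigma_{\Gmw'}]$ is diagonal. The only unstated but harmless simplification you are tacitly using is that $C_n(\Dn)=0$ (no cells above the top dimension) and that the only $(n-1)$-cells are the labeled cycles, so a chain representing $[\sigma_{\Gmw'}]$ automatically has zero residue at every other codimension-$n$ stratum.
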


\noindent The proof of Theorem~\ref{thm:dualbasis} consists of unraveling the identification of the reduced homology of the boundary complex with top weight cohomology using the weight filtration on the logarithmic de Rham complex, the spectral sequence associated to this filtration, and the theory of Poincar\'e residues, along the lines followed by Hacking in his work on homology of tropical varieties \cite{Hacking08}.  Similar arguments characterize the relations among abelian cycles of middle homological degree in $\cM_{g,n}$ in terms of coboundaries on the tropical moduli space.  See Theorem~\ref{thm:abelianrelations}.  This line of argument is standard for varieties and the generalization to smooth Deligne--Mumford stacks is straightforward.  However, lacking a suitable reference, we provide details in the appendix.  

\medskip

\noindent \textbf{Structure of the paper.}  A reader who is interested only in the topology of $\Dgn$ and relations to outer space should read \S\ref{sec:graphs} and may proceed directly from there to the proofs of Theorems~\ref{thm:contractible}, \ref{thm:boundary}, and \ref{thm:connected} in \S\ref{sec:contractibility}-\ref{sec:proofs}, skipping \S\ref{sec:cellular}-\S\ref{sec:abcycle} and the appendix.  (The combinatorial topological constructions in \S\ref{sec:cellular} may also be of interest to such readers, but are not required for the proofs of these three theorems.) See also the proof of Theorem~\ref{thm:cohomology} in \S\ref{sec:proofs} for a computation of the rational homology of $\Dn$ as a representation of $S_n$.

The basic background material on boundary complexes and top weight cohomology reviewed in \S\ref{sec:stackboundary} allows us to interpret topological invariants of $\Dgn$ as algebraic invariants of $\cM_{g,n}$. This is essential for the proof of Theorem~\ref{thm:cohomology}.  

The more refined statements involving cellular chains and cochains, residue integrals of logarithmic forms, and torus classes, presented in \S\ref{sec:cellular}-\ref{sec:log} and the appendix, are key ingredients in the proof of Theorem~\ref{thm:dualbasis}.  In \S\ref{sec:abcycle}, we explain the identification of torus classes in $\cM_{g,n}$ with abelian cycles for the pure mapping class group. This applies, in particular, to the explicit $(n-1)!/2$ torus classes that give a dual basis to the top weight cohomology of $\cM_{1,n}$, for $n \geq 3$, and allows us to describe the relations among middle dimensional abelian cycles in terms of coboundaries on $\Dgn$ in Theorem~\ref{thm:abelianrelations}.

\bigskip

\noindent \textbf{Acknowledgments.}  We thank D.~Abramovich, E.~Getzler, R.~Hain, M.~Kahle, L.~Migliorini, D.~Petersen, N.~Salter, C.~Simpson, O.~Tommasi, and R.~Vakil for helpful conversations related to this work.  MC was supported by NSF DMS-1204278.  SG was supported by NSF DMS-1405001.  SP was supported by NSF CAREER DMS-1149054 and is grateful to the Institute for Advanced Study for ideal working conditions in Spring 2015.

\section{Graphs, tropical curves, and moduli}  \label{sec:graphs}

In this section, we briefly recall the construction of $\Dgn$ as a moduli space for tropical curves, which are marked weighted graphs with a length assigned to each edge.  We also discuss its relationship to Culler--Vogtmann outer space and homological stability in~\S\ref{ss:outerspace}.

\subsection{Marked weighted graphs and tropical curves} \label{subsec:J-g-n}

Let $G$ be a finite graph, possibly with loops and parallel edges.  All graphs in this paper will be connected.  Write $V(G)$ and $E(G)$ for the vertex set and edge set, respectively, of $G$.
A {\em weighted graph} is a connected graph $G$ together with a function $w\col V(G)\rightarrow \Z_{\ge 0}$, called the {\em weight function}.  The {\em genus} of $(G,w)$ is
\[
g(G,w)= b_1(G) + \sum_{v\in V(G)}\! w(v),
\]
where $b_1(G) = |E(G)|-|V(G)|+1$ is the first Betti number of $G$.  The {\em core} of a weighted graph $(G,w)$ is the smallest connected subgraph of $G$ that contains all cycles of $G$ and all vertices of positive weight. So the core is nonempty if and only if $g(G,w) > 0$.

An {\em $n$-marking} on $G$ is a map $m\col\{1,\ldots,n\}\rightarrow V(G)$.  In pictures, we may think of the marking as a set of $n$ labeled half-edges attached to the vertices of $G$.

An {\em $n$-marked weighted graph} is a triple $\Gmw=(G,m,w)$, where $(G,w)$ is a weighted graph and $m$ is an $n$-marking. The {\em valence} of a vertex $v$ in a marked weighted graph, denoted $\val(v)$, is the number of half-edges of $G$ incident to $v$ plus the number of marked points at $v$.  In other words, a loop edge based at $v$ counts twice towards $\val(v)$, once for each end, an ordinary edge counts once, and a marked point counts once. We say that $\Gmw $ is {\em stable} if for every $v\in V(G)$, $$2w(v) -2 + \val(v) > 0.$$  Equivalently, $\Gmw$ is stable if and only if every vertex of weight 0 has valence at least 3, and every vertex of weight 1 has  valence at least 1.

\subsection{The category $J_{g,n}$}  The connected stable $n$-marked graphs of genus $g$ form the objects of a \emph{category} which we denote $J_{g,n}$.  The morphisms in this category are compositions of contractions of edges $G\rightarrow G/e$ and isomorphisms $G{\rightarrow}G'.$  Because we shall need to take colimits of functors out of $J_{g,n}$ in what follows, we now give a formal and precise definition of $J_{g,n}$. 

Formally, then, a graph $G$ is a finite set $X(G) = V(G) \sqcup H(G)$ (of ``vertices'' and ``half-edges''), together with two functions $s_G, r_G\colon X(G) \to X(G)$ satisfying $s_G^2 = \mathrm{id}$ and $r_G^2 = r_G$ and that $$\{x \in X(G) \mid r_G(x) = x\} = \{x \in X(G) \mid s_G(x) = x\} = V(G).$$ Informally: $s_G$ sends a half-edge to its other half, while $r_G$ sends a half-edge to its incident vertex.  We let $E(G) = H(G)/(x \sim s_G(x))$ be the set of edges.  The definition of $n$-marking, weights, genus, and stability is as before.  

The objects of the category $J_{g,n}$ are all connected stable $n$-marked graphs of genus $g$.
For an object $\Gmw = (G,m,w)$ we we shall write $V(\Gmw)$ for $V(G)$ and similarly for $H(\Gmw)$, $E(\Gmw)$, $X(\Gmw)$, $s_\Gmw$ and $r_\Gmw$.  Then a morphism $\Gmw \to \Gmw'$ is a function $f\colon X(\Gmw) \to X(\Gmw')$ such that $$f \circ r_\Gmw = r_{\Gmw'} \circ f \text{ and }f \circ s_\Gmw = s_{\Gmw'} \circ f$$ subject to the following three requirements:  

\begin{itemize}
\item Noting first that $f(V(\Gmw))\subseteq V(\Gmw')$, we require $f\circ m = m'$, where $m$ and $m'$ are the respective marking functions of $\Gmw$ and $\Gmw'$. 
\item Each $e \in H(\Gmw')$ determines the subset $f^{-1}(e) \subset X(\Gmw)$ and we require that it consists of precisely one element (which will then automatically be in $H(\Gmw)$).  
\item Each $v \in V(\Gmw')$ determines a subset $S_v = f^{-1}(v) \subset X(\Gmw)$ and $\mathbf{S}_v = (S_v,r\vert_{S_v}, s\vert_{S_v})$ is a graph; we require that it be connected and have $g(\mathbf{S}_v,w\vert_{\mathbf{S}_v}) = w(v)$.  
\end{itemize}

\noindent Composition of morphisms $\Gmw \to \Gmw' \to \Gmw''$ in $J_{g,n}$ is given by the corresponding composition $X(\Gmw) \to X(\Gmw') \to X(\Gmw'')$ in the category of sets.

Our definition of graphs and the morphisms between them agrees, in essence, with the definitions in \cite[X.2]{ACG11} and \cite[\S3.2]{acp}.  One feature worth noting is the involution associated to each loop edge; if $e$ is a loop in $G$, consisting of half-edges $h$ and $h'$, then flipping the loop (i.e., exchanging $h$ and $h'$) gives a nontrivial automorphism of $G$.

\begin{remark}
We also note that any morphism $\Gmw \to \Gmw'$ can be alternatively described as an isomorphism following a finite sequence of \emph{edge collapses}: if $e \in E(\Gmw)$ there is a morphism $\Gmw \to \Gmw/e$ where $\Gmw/e$ is the marked weighted graph obtained from $\Gmw$ by collapsing $e$ together with its two endpoints to a single vertex $[e] \in \Gmw/e$.  If $e$ is not a loop, the weight of $[e]$ is the sum of the weights of the endpoints of $e$ and if $e$ is a loop the weight of $[e]$ is one more than the old weight of the end-point of $e$.  If $S = \{e_1, \dots, e_k\} \subset E(\Gmw)$ there are iterated edge collapses $\Gmw \to \Gmw/e_1 \to (\Gmw/e_1)/e_2 \to \dots$ and any morphism $\Gmw \to \Gmw'$ can be written as such an iteration followed by an isomorphism from the resulting quotient of $\Gmw$ to $\Gmw'$.
\end{remark}

Let us also point out that a morphism $f: \Gmw \to \Gmw'$ determines an injective function $H(\Gmw') \to H(\Gmw)$, sending $e' \in H(\Gmw')$ to the unique element $e \in H(\Gmw)$ with $f^{-1}(e') = \{e\}$.  We shall write $f^{-1}\colon H(\Gmw') \to H(\Gmw)$ for this map and also for the induced injective map $E(\Gmw') \to E(\Gmw)$.  This construction determines functors $H,E: J_{g,n}^\mathrm{op} \to \mathrm{Sets}$.

Finally, we shall say that $\Gmw$ and $\Gmw'$ have the same \emph{combinatorial type} if they are isomorphic in $J_{g,n}$.  In order to get a \emph{small} category $J_{g,n}$ we shall tacitly pick one object in each isomorphism class and pass to the full subcategory on those objects.  In fact there are only finitely many isomorphism classes of objects in $J_{g,n}$, since any object has at most $6g-6+2n$ half-edges and $2g-2+n$ vertices and for each possible set of vertices and half-edges there are finitely many ways of gluing them to a graph, and finitely many possibilities for the $n$-marking and weight function.

\subsection{Moduli space of tropical curves}  

We now recall the construction of moduli spaces of stable tropical curves, as the colimit of a diagram of cones parametrizing possible lengths of edges for each fixed combinatorial type.  The construction follows \cite{BrannettiMeloViviani11, Caporaso13}.  

Fix integers $g,n\ge 0$ with $2g-2+n > 0$. A \emph{length function} on $\Gmw = (G,m,w) \in J_{g,n}$ is an element $\ell \in \R_{>0}^{E(\Gmw)}$, and we shall think geometrically of $\ell(e)$ as the \emph{length} of the edge $e \in E(\Gmw)$.  An $n$-marked genus $g$ \emph{tropical curve} is then a pair $\Gamma = (\Gmw,\ell)$ with $\Gmw \in J_{g,n}$ and $\ell \in \R_{>0}^{E(\Gmw)}$, and we shall say that $(\Gmw,\ell)$ is \emph{isometric} to $(\Gmw',\ell')$ if there exists an isomorphism $\phi: \Gmw \to \Gmw'$ in $J_{g,n}$ such that $\ell' = \ell \circ \phi^{-1}: E(\Gmw') \to \R_{>0}$.  The \emph{volume} of $(\Gmw,\ell)$ is $\sum_{e \in E(\Gmw)} \ell(e) \in \R_{> 0}$.

We can now describe the underlying set of the topological space $\Delta_{g,n}$, which is the main object of study in this paper. It is the set of isometry classes of $n$-marked genus $g$ tropical curves of volume 1.  We proceed to describe its topology and further structure as a closed subspace of the moduli space of tropical curves.

\begin{definition}\label{definition:mgn}
Fix $g,n\ge 0$ with $2g-2+n > 0$.  For each object $\Gmw \in J_{g,n}$ define the topological space
$$\sigma(\Gmw)= \R_{\geq 0}^{E(\Gmw)}=\{\ell: E(\Gmw) \to \R_{\geq 0}\}.$$  For a morphism $f\colon  \Gmw \to \Gmw'$ define the continuous map $\sigma f \colon \sigma(\Gmw') \to \sigma(\Gmw)$ by $(\sigma f)(\ell') = \ell\colon E(\Gmw) \to \R_{\geq 0}$, where $\ell$ is given by
  
$$\ell(e) = \begin{cases} 
\ell'(e') & \text{ if $f$ sends $e$ to $e'\in E(\Gmw')$},\\
0 & \text{  if $f$ collapses $e$ to a vertex}.
\end{cases}$$
This defines a functor $\sigma: J_{g,n}^\mathrm{op} \to \mathrm{Spaces}$ and the topological space $\Mgn$ is defined to be the colimit of this functor.
\end{definition}

In other words, the topological space $\Mgn$ is obtained as follows. For each morphism $f\colon \Gmw\to\Gmw'$, consider the map $L_f\colon \sigma(\Gmw') \to \sigma(\Gmw)$ that sends $\ell'\colon E(\Gmw')\to \R_{>0}$ to the length function $\ell\colon E(\Gmw)\to \R_{>0}$ obtained from $\ell'$ by extending it to be 0 on all edges of $\Gmw$ that are collapsed by $f$.  So $L_f$ linearly identifies $\sigma(\Gmw')$ with some face of $\sigma(\Gmw)$, possibly $\sigma(\Gmw)$ itself.  Then
$$\Mgn = \left(\coprod \sigma(\Gmw)\right) \Big / \{\ell' \sim L_f(\ell')\},$$
where the equivalence runs over all morphisms $f\colon \Gmw\to\Gmw'$ and all $\ell'\in \sigma(\Gmw')$.  

In fact, we shall sometimes regard $\Mgn$ as slightly more than just a topological space: $\Mgn$ is
an example of a {\em generalized cone complex}, as defined in \cite[\S2]{acp}.  Formally, the projections
$\sigma(\Gmw) = \R_{\geq 0}^{E(\Gmw)} \to \R$, one for each element of $E(\Gmw)$, give each $\sigma(\Gmw)$ the structure of a \emph{cone}.

The \emph{volume} defines a function $v: \sigma(\Gmw) \to \R_{\geq 0}$, given explicitly as $v(\ell) = \sum_{e \in E(\Gmw)} \ell(e)$, and for any morphism $\Gmw \to \Gmw'$ in $J_{g,n}$ the induced map
$\sigma(\Gmw) \to \sigma(\Gmw')$ preserves volume.  Hence there is an induced map $v: \Mgn \to \R_{\geq 0}$, and there is a unique element in $\Mgn$ with volume 0 which we shall denote $\bullet_{g,n}$.  The underlying graph of $\bullet_{g,n}$ consists of a single vertex with weight $g$ and containing all $n$ marked points.

\begin{definition}\label{def:dgn}
We let $\Dgn$ be the subspace of $\Mgn$ parametrizing curves of volume 1, i.e.,\ the inverse image of $1 \in \R$ under $v: \Mgn \to \R$.
\end{definition}

\noindent Thus $\Dgn$ is homeomorphic to the link of $\Mgn$ at the cone point $\bullet_{g,n}$.  Moreover, it inherits the structure of a generalized $\Delta$-complex, as we shall define in \S\ref{sec:cellular}, from the generalized cone complex structure on $\Mgn$.  See Example~\ref{ex:Dgn} and \S\ref{sec:cones}.
 
\subsection{Relationship with outer space}\label{ss:outerspace}

It is natural to ask whether the homology of $\Dgn$ can be related to known instances of {\em homological stability} for the complex moduli space of curves $\cM_{g,n}$ and for the free group $F_g$.  Here, we pause for a moment to discuss briefly the reasons that the tropical moduli space $\Dgn$ relates to both $\cM_{g,n}$ and $F_g$.

{Homological stability} has been an important point of view in the understanding of $\cM_{g,n}$; we are referring to the fact that the cohomology group $H^k(\cM_{g,n}; \Q)$ is independent of $g$ as long as $g \geq 3k/2 + 1$ \cite{Harer, Ivanov93, Boldsen12}.  The structure of the rational cohomology in this stable range was conjectured by Mumford and proved by Madsen and Weiss \cite{MadsenWeiss07}.  There are certain \emph{tautological classes} $\kappa_i \in H^{2i}(\cM_{g,n})$ and $\psi_i \in H^2(\cM_{g,n})$, and the induced map
\begin{equation*}
\Q[\kappa_1, \kappa_2, \dots] \otimes \Q[\psi_1, \dots, \psi_n] \to H^*(\cM_{g,n};\Q) 
\end{equation*}
is an isomorphism in the stable range.

A very similar homological stability phenomenon happens for \emph{automorphisms of free groups}.  If $F_g$ is the free group on $g$ generators and $\mathrm{Aut}(F_g)$ is its automorphism group, then Hatcher and Vogtmann \cite{HatcherVogtmann98} proved that the group cohomology of $H^k(\mathrm{Aut}(F_g))$ is independent of $g$ as long as $g \gg k$.  Galatius proved the analogue of the theorem of Madsen and Weiss for these groups: the rational cohomology $H^k(B\mathrm{Aut}(F_g);\Q)$ vanishes for $g \gg k$ \cite{Galatius11}.

The tropical moduli space $\Dgn$ is closely related to \emph{both} of these objects.  On the one hand its reduced rational homology is identified with the top weight cohomology of $\cM_{g,n}$.  On the other hand it is also closely related to $\mathrm{Aut}(F_g)$, as we shall now briefly explain. One of the initial motivations for this paper was to use the tropical moduli space to provide a direct link between moduli space of curves and automorphism groups of free groups.

The tropical moduli space has an open subset $\Dgn^{\mathrm{pure}}\subset \Dgn$ parametrizing tropical curves with zero vertex weights.  In fact $\Delta_{g,1}^\mathrm{pure}$ is a rational model for $B\mathrm{Aut}(F_g)$, in the sense that there is a map
\begin{equation*}
B\mathrm{Aut}(F_g) \to \Delta_{g,1}^\mathrm{pure}
\end{equation*}
inducing an isomorphism in rational cohomology.  This rational homology equivalence is deduced from the contractibility of Culler and Vogtmann's \emph{outer space} \cite{CullerVogtmann86, HatcherVogtmann98}.  (Recall that $B\mathrm{Aut}(F_g)$ denotes the \emph{classifying space} of the discrete group $\mathrm{Aut}(F_g)$.  It is a $K(\pi,1)$ space whose singular cohomology is isomorphic to the group cohomology of $\mathrm{Aut}(F_g)$.)

The inclusion $\Dgn^\mathrm{pure} \subset \Dgn$ at first sight seems to give the desired direct link between automorphism groups of free groups and moduli space of curves.  For $n=1$, this inclusion induces a map in cohomology
\begin{equation}\label{eq:3}
\Gr_{6g-4}^W H^*(\cM_{g,1};\Q) \to H^*(B\mathrm{Aut}(F_g);\Q).
\end{equation}
It is tempting to try to form some kind of direct limit of~\eqref{eq:3} as $g \to \infty$.  For $n=1$, there is indeed a map $\Delta_{g,1} \to \Delta_{g+1,1}$, given by taking wedge sum with a circle at the marked point.  This map fits with the stabilization map for $B\mathrm{Aut}(F_g)$ into a commutative diagram
\begin{equation}\label{eq:2}
  \begin{aligned}
    \xymatrix{
      {B\mathrm{Aut}(F_g)} \ar[d] \ar[r]^-{\simeq_\Q} &
      {\Delta_{g,1}^\mathrm{pure}} \ar[d] \ar[r] &
      {\Delta_{g,1}} \ar[d]\\
      {B\mathrm{Aut}(F_{g+1})} \ar[r]_-{\simeq_\Q} &
      {\Delta_{g+1,1}^\mathrm{pure}}
      \ar[r] &{\Delta_{g+1,1}}.
    }
  \end{aligned}
\end{equation}
For the outer automorphism group $\mathrm{Out}(F_g)$ there is a similar comparison diagram
\begin{equation*}
  \xymatrix{
    {B\mathrm{Aut}(F_g)} \ar[d] \ar[r]^-{\simeq_\Q} &
    {\Delta_{g,1}^\mathrm{pure}} \ar[d] \ar[r] &
    {\Delta_{g,1}} \ar[d]\\
    {B\mathrm{Out}(F_{g})} \ar[r]_-{\simeq_\Q} &
    {\Delta_{g,0}^\mathrm{pure}}
    \ar[r] &{\Delta_{g,0}},
  }
\end{equation*}
and for $n > 1$ there is a similar diagram with $\Dgn^\mathrm{pure} \subset \Dgn$ compared with the groups $A_{g,n}$ from \cite{HatcherWahl10}.

In light of \cite{MadsenWeiss07} and \cite{Galatius11} it is tempting to also ask for the limiting cohomology of $\Delta_{g,1}$ as $g \to \infty$.  However, this limit behaves very differently from the corresponding limits for $B\mathrm{Aut}(F_g)$ and $\cM_{g,n}$.  In fact, the stabilization maps $\Delta_{g,1} \to \Delta_{g+1,1}$ are easily seen to be null homotopic and hence the limiting cohomology vanishes. The main result of \cite{Galatius11} implies that the limiting rational cohomology of the subspaces $\Delta_{g,1}^\mathrm{pure}$ also vanishes, but the proof is rather involved.  To see that the stabilization maps $\Delta_{g,1} \to \Delta_{g+1,1}$ in~(\ref{eq:2}) are null homotopic, first recall that they send a tropical curve $\Gamma \in \Delta_{g,1}$ to $\Gamma \vee S^1$, appropriately normalized (e.g.\ multiply all edge lengths in $\Gamma$ by $\frac12$ and give the $S^1$ length $\frac12$).  Continuously shrinking the volume of $\Gamma \subset \Gamma \vee S^1$ and increasing that of $S^1$ defines a homotopy from the stabilization map to the constant map $\Delta_{g,1} \to \Delta_{g+1,1}$ which sends any weighted tropical curve $\Gamma$ to a circle of length 1, whose basepoint has weight $g$.

\section{Generalizations of $\Delta$-complexes}  \label{sec:cellular}

In this section, we review and develop some basic notions in combinatorial topology with a view towards understanding $\Dgn$ and, more generally, the dual complex of any normal crossings divisor in a smooth Deligne--Mumford (DM) stack, as an object that we call a \emph{generalized $\Delta$-complex.}  We also present a theory of cellular chains and cochains in this framework, which will be especially useful for applications to $\cM_{g,n}$.  For instance, in \S\ref{sec:residues}, we will explain how integrals of Poincar\'e residues of logarithmic differential forms on $\cM_{g,n}$ are naturally interpreted as cellular chains on $\Dgn$.

Recall that a $\Delta$-complex can be defined as a topological space $X$ equipped with a collection of maps $\Delta^p \to X$ for varying $p$, where $\Delta^p$ denotes the standard $p$-simplex, satisfying a collection of axioms which guarantee that $X$ can be obtained by gluing these simplices along faces.  The vertices of $\Delta^p$ come with a total ordering, and the gluing maps are required to preserve this ordering.  See \cite[\S2.1]{Hatcher02} for a precise definition along these lines.  One can also give a combinatorial definition of $\Delta$-complexes, in terms of discrete data recording how the simplices are to be glued.  We recall such a definition in \S\ref{sec:Delta-complexes}.

We discuss two generalizations of $\Delta$-complexes.  The first, which we call \emph{unordered $\Delta$-complexes}, allows distinct simplices to be glued along faces using maps that do not necessarily preserve the ordering of the vertices.  The second, which we call \emph{generalized $\Delta$-complexes}, also allows a simplex to be glued to itself along maps which permute its vertices.

Before delving into the definitions, let us point out that the association taking a non-negative integer $p$ to the standard simplex $\Delta^p$ can be promoted to a functor from finite sets to topological spaces; for a finite set $S$ define $\Delta^S = \{a: S \to [0,\infty) \mid \sum a(s) = 1\}$ in the Euclidean topology and for any map of finite sets $\theta: S \to T$, define $\theta_*: \Delta^S \to \Delta^T$ by
\begin{equation*}
  (\theta_* a)(t) = \sum_{\theta(s) = t} a(s).
\end{equation*}
The usual $p$-simplex is recovered as $\Delta^p = \Delta^{[p]}$ with $[p] = \{0, \dots, p\}$.

\subsection{Ordinary $\Delta$-complexes}
\label{sec:Delta-complexes}

Let $\Delta_\inj$ be the category with one object $[p] = \{0, \dots, p\}$ for each integer $p \geq 0$, in which the morphisms $[p] \to [q]$ are the order preserving injective maps.  We shall take the following as the official definition of a $\Delta$-complex.

\begin{definition}
A $\Delta$-complex is a presheaf on $\Delta_\inj$.
\end{definition}

\noindent In other words, a $\Delta$-complex is a contravariant functor from $\Delta_\inj$ to sets.  When $X$ is a $\Delta$-complex and $\theta: [p] \rightarrow [q]$ is a morphism in $\Delta_\inj$, we write $\theta^* : X([q]) \rightarrow X([p])$ for the induced map $X(\theta)$.

The \emph{geometric realization} of a $\Delta$-complex $X$ is the topological space
\begin{equation}\label{eq:realization}
  |X| = \Big(\coprod_{p = 0}^\infty X([p]) \times \Delta^p\Big) \big / \sim,
\end{equation}
where $\sim$ is the equivalence relation generated by $(x,\theta_* a) \sim (\theta^* x, a)$ for $x \in X([q])$, $\theta: [p] \to [q]$ in $\Delta_\inj$, and $a \in \Delta^p$.  Each element $x \in X([p])$ determines a map of topological spaces $x: \Delta^p \to |X|$, and these maps in turn define the cells in a CW structure on $|X|$.  Many sources, including \cite{Hatcher02}, take the official definition to be a topological space equipped with a set of maps from simplices satisfying certain axioms.  The intuition is that $X([p])$ has one element for each $p$-simplex and the functoriality determines how the simplices are glued together.  In any case, these two different approaches produce equivalent categories.

As is customary, we shall occasionally write $X_p = X([p])$.  Let us also write $\delta^i: [p-1] \to [p]$ for the unique injective order-preserving map whose image does not contain $i$, and $d_i: X([p]) \to X([p-1])$ for the induced map.  The $\Delta$-complex $X$ is then determined by the sets $X([p])$ for $p \geq 0$ and the maps $d_i: X([p]) \to X([p-1])$ for $i = 0, \dots, p$.  These satisfy $d_i d_j = d_{j-1} d_i$ for $i < j$, and any sequence of sets $X([p])$ and maps $d_i$ satisfying this axiom uniquely specifies a $\Delta$-complex.

\subsection{Unordered $\Delta$-complexes}
\label{sec:symmetric}

The vertices in each simplex of a $\Delta$-complex come with a total ordering which is part of the structure, and the simplices are glued along maps which preserve this ordering.  Allowing gluing maps which do not preserve ordering leads to a more general notion of cell complex which we will call \emph{unordered $\Delta$-complex}. (This terminology is not standard, but seems reasonable to us.)  The precise definition can be given either as a topological space equipped with a set of maps from $\Delta^p$ for varying $p$, satisfying certain axioms, or, equivalently, by specifying the combinatorial gluing pattern for these simplices.  We again take the latter as the official definition.

Let $I$ be the category with the same objects as $\Delta_\inj$, but whose morphisms $[p] \to [q]$ are all
injective maps $\{0, \dots, p\} \to \{0, \dots, q\}$.

\begin{definition}
An \emph{unordered $\Delta$-complex} is a presheaf $X$ on $I$ such that the action of the symmetric group  $S_{p+1} = I([p],[p])$ on $X([p])$ is free.
\end{definition}

An unordered $\Delta$-complex $X$ has a geometric realization $|X|$ defined by the formula~(\ref{eq:realization}), just as a $\Delta$-complex does, where the equivalence relation now uses all morphisms $\theta$ in $I$. Each element $x \in X([p])$ gives rise to a map $x: \Delta^p \to |X|$, but the $(p+1)!$ elements in the $S_{p+1}$-orbit of $x$ gives rise to maps with the same image, and differ only by precomposing with an automorphism of the topological space $\Delta^p$ induced by permuting the vertices.  The intuition is that $|X|$ is again built by gluing simplices along faces, but the vertices in each simplex do not come with a preferred ordering.  The set $X([p])$ has $(p+1)!$ elements for each $p$-simplex in $|X|$, one for each total ordering of its vertices, and the $p$-simplices in $|X|$ correspond to $S_{p+1}$-orbits in $X([p])$.  If one chooses a representative for each orbit, the resulting maps $\Delta^p \to |X|$ define a CW structure on $|X|$.

Note that a $\Delta$-complex $X: \Delta_\mathrm{inj}^\mathrm{op} \to \mathrm{Sets}$ determines an
unordered $\Delta$-complex $X':I^\mathrm{op} \to \mathrm{Sets}$ with homeomorphic geometric realization, given by $X'([p]) = S_{p+1} \times X([p])$, where an element of $X'([p])$ is thought of as a simplex of $X$ together with a re-ordering of its vertices.  In more category theoretic terms, $X'$ is the \emph{left Kan extension} of $X$ along the inclusion functor $\Delta_{\inj} \to I$.  (We refer the reader to \cite{MacLane98} for details on Kan extensions and other notions from category theory.)

\begin{remark}\label{remark:simplicial-complexes}
Recall that a $\Delta$-complex $X$ is \emph{regular} if the maps $\Delta^p \to |X|$ associated to $\sigma \in X([p])$ for all $p \geq 0$ are all injective.  This definition makes sense equally well for unordered $\Delta$-complexes $X$ and is equivalent to the condition that every edge of $X$ has two distinct endpoints, i.e., for any $e \in X([1])$, we have $d_0(e) \neq d_1(e)$.

When we discuss the {dual complexes} of normal crossings divisors $D \subset X$ in a smooth variety or DM stack in \S\ref{sec:stackboundary}, we shall associate a {\em generalized $\Delta$-complex}, which we are about to define, to any such $D \subset X$.  The dual complex will be a regular unordered $\Delta$-complex exactly when $D$ has \emph{simple} normal crossings, meaning that every irreducible component of $D$ is smooth.
\end{remark}

\subsection{Generalized $\Delta$-complexes}
\label{sec:generalized}

One defect of the category of unordered $\Delta$-complexes is that many diagrams do not have colimits.  For an explicit example, consider the {\em unordered interval} defined by the representable functor $I(-,[1])$.  The geometric realization is homeomorphic to the interval $[0,1]$, but with no preferred ordering of the vertices.  There are precisely two endomorphisms of $[1] \in I$, but the resulting diagram $I(-,[1]) \double I(-,[1])$ does not have a colimit in the category of unordered $\Delta$-complexes.  One can take the colimit $X$ in the category of presheaves on $I$, but the resulting action of $S_2$ on $X([1])$ is not free.  Repairing this defect is one motivation for the following generalization of unordered $\Delta$-complexes, in the spirit of the generalized cone complexes introduced in \cite[\S2]{acp}.   (See \S\ref{sec:cones} for a discussion of the precise relationship between generalized $\Delta$-complexes and generalized cone complexes.)
 
\begin{definition}
  A \emph{generalized $\Delta$-complex} is a presheaf on $I$.
\end{definition}

\noindent In other words, a generalized $\Delta$-complex is a functor from $I^\mathrm{op}$ to sets.  Again we define the geometric realization $|X|$ by the formula~(\ref{eq:realization}), where the equivalence relation $\sim$ uses all morphisms $\theta$ in $I$.  

\begin{example}\label{ex:halfinterval}
A typical example of a generalized $\Delta$-complex which is not an unordered $\Delta$-complex is the {\em half interval} $$X = \varinjlim (I(-,[1]) \double I(-,[1]))$$ described above, given by taking $X([0])$ and $X([1])$ to be one-point sets and $X([p]) = \emptyset$ for $p \geq 2$.  The unique element in $X([1])$ gives a map $\Delta^1 \to |X|$ which is not injective; it identifies $|X|$ with the topological quotient of $\Delta^1$ by the action of $\Z/2\Z$ that reverses the orientation of the interval.  
\end{example}

For general $X$, the geometric realization $|X|$ will be built out of quotient spaces $\Delta^p/H$ for varying $p$ and subgroups $H < S_{p+1}$. Indeed, for each element $x \in X([p])$ we have the stabilizer
subgroup $H_x \subset S_{p+1}$.  Then $x$ gives a map $\Delta^p \rightarrow |X|$ which factors through the quotient $\Delta^p / H_x$, and whose restriction to $\partial \Delta^p$ maps into the union of the images of the maps $\Delta^{p-1} \rightarrow |X|$ induced by the elements $d_i x$ in $X([p-1])$.  The geometric
realization $|X|$ is therefore filtered by closed subspaces $|X|^{p}$, defined as the image of $\coprod_{i \leq p} \Delta^i \times X([i]) \to |X|$. We have $|X|^{-1} = \emptyset$ and $|X|^{p}$ is obtained from $|X|^{p-1}$ as the pushout of the diagram
\begin{equation*}
  \coprod_{x \in
  X([p])} \Delta^p/H_x \leftarrow   \coprod_{x \in
  X([p])} \partial \Delta^p/H_x \rightarrow |X|^{p-1},
\end{equation*}
where the disjoint unions are over a choice of $x \in X([p])$, one in each $S_{p+1}$-orbit and the map
$\coprod_{x \in X([p])} \partial \Delta^p/H_x \rightarrow |X|^{p-1}$ is induced by the face maps $d_i: X([p]) \to X([p-1])$.  (More canonically the disjoint unions can be written as $(\Delta^p \times X([p]))/S_{p+1}$ and similarly for $\partial \Delta^p$.)

The category of generalized $\Delta$-complexes has all small colimits, and these are computed objectwise; in other words, the set of $p$-simplices of the colimit is the colimit of the set of $p$-simplices.  In particular, any pair of parallel arrows $X \double Y$ of generalized $\Delta$-complexes has a coequalizer $Y \to Z$ which is again a generalized $\Delta$-complex.  Note that this coequalizer need not be an unordered $\Delta$-complex even when $X$ and $Y$ are, as in Example~\ref{ex:halfinterval}.  The set of $p$-simplices of the coequalizer $Z$ is the coequalizer in sets $X([p]) \double Y([p]) \to Z([p])$.

\begin{lemma}
Let $Z$ be the generalized $\Delta$-complex defined as the coequalizer of $X \double Y$.  Then the natural map from the coequalizer of $|X| \double |Y|$ in topological spaces to $|Z|$ is a homeomorphism.
\end{lemma}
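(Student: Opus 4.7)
The plan is to recognize the geometric realization functor $|\cdot|$, from the category $\mathrm{PSh}(I)$ of generalized $\Delta$-complexes to topological spaces, as a left adjoint, so that it preserves all small colimits and in particular coequalizers.

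To exhibit the right adjoint, I would define a ``singular'' functor $\mathrm{Sing}\colon \mathrm{Top}\to \mathrm{PSh}(I)$ by
\[
\mathrm{Sing}(T)([p])=\Hom_{\mathrm{Top}}(\Delta^p,T),
\]
with contravariant functoriality in $[p]$ inherited from the covariant functor $\Delta^\bullet\colon I\to \mathrm{Top}$ described at the start of \S\ref{sec:cellular}. To check the adjunction
\[
\Hom_{\mathrm{Top}}(|X|,T)\;\cong\;\Hom_{\mathrm{PSh}(I)}(X,\mathrm{Sing}(T)),
\]
one unwinds the universal property of the quotient topology on $|X|$ given by the defining formula~(\ref{eq:realization}): a continuous map $|X|\to T$ is precisely a family of continuous maps $X([p])\times\Delta^p\to T$ compatible with the identifications $(x,\theta_*a)\sim(\theta^*x,a)$ for $\theta\in I([p],[q])$, and currying such a family, using that each $X([p])$ is discrete, yields exactly a morphism of presheaves $X\to\mathrm{Sing}(T)$.

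With the adjunction in hand, the lemma is immediate. The coequalizer $Z$ of the diagram $X\double Y$ in $\mathrm{PSh}(I)$ is, as noted in the paragraph preceding the statement, computed objectwise; applying the left adjoint $|\cdot|$ produces a coequalizer of $|X|\double |Y|$ in $\mathrm{Top}$, and the tautological comparison map from the topological coequalizer of $|X|\double |Y|$ to $|Z|$ is precisely this isomorphism. The only mild subtlety, rather than a true obstacle, is that one would otherwise need to directly compare two a priori different quotient topologies on the underlying set of $|Z|$ --- namely the one descending from $\coprod_p Z([p])\times\Delta^p$ and the one descending from $|Y|$ --- and the adjunction argument sidesteps this entirely by showing that both spaces represent the same functor on $\mathrm{Top}$.
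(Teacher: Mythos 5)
Your proof is correct and follows essentially the same route as the paper: exhibit $\mathrm{Sing}(T)([p])=\Hom_{\mathrm{Top}}(\Delta^p,T)$ as a right adjoint to geometric realization by unwinding the quotient topology in formula~(\ref{eq:realization}), then invoke preservation of colimits by left adjoints. The paper's proof is identical in substance, merely phrased as the special case of the general statement that geometric realization preserves all small colimits.
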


\begin{proof}
This is a special case of the more general observation that geometric realization, as a functor from generalized $\Delta$-complexes to topological spaces, preserves colimits over \emph{any} small indexing category whatsoever.  Recall that any functor which admits a right adjoint will automatically preserve all small colimits \cite[V.5]{MacLane98}.

A right adjoint to geometric realization may be defined as follows.  Let $Z$ be a topological space, and let $\mathrm{Sing}(Z)$ be the generalized $\Delta$-complex which sends $[p]$ to the set of all continuous maps $\Delta^p \to Z$.  The resulting functor $\mathrm{Sing}$ is right adjoint to geometric realization.
Indeed, given $X\colon I^\mathrm{op}\rightarrow\mathrm{Set}$ and given a topological space $Z$, a natural transformation from $X$ to $\mathrm{Sing}(Z)$ amounts to a choice of a continuous map $\Delta^p\to Z$ for every element of $X([p])$, such that the choices are compatible with the gluing data $X(\theta)$ for all $\theta\in\operatorname{Mor}(I)$.  These precisely give the data of a continuous map $|X|\rightarrow Z.$  Moreover, the association is natural with respect both to maps $X\rightarrow X'$ and to maps $Z\rightarrow Z'$.
\end{proof}

\begin{remark}\label{remark:collapse-maps}
The category of generalized $\Delta$-complexes does not have all the morphisms one might expect.  For example, the geometric realization of the representable functor $I(-,[p])$ is homeomorphic to $\Delta^p$, but the collapse map $\Delta^1 \to \Delta^0 = \{\ast\}$ is not modeled by a natural transformation $I(-,[1]) \to I(-,[0])$ since there are no such natural transformations (by the Yoneda lemma, they are in bijection with injective maps $[1] \to [0]$).

This could be remedied by including the category $I$ into the category $F$ with the same objects but with morphisms $[p] \to [q]$ being \emph{all} maps of sets, injective or not.  To a functor $X: I^\mathrm{op} \to \mathrm{Sets}$ is associated a functor $X': F^\mathrm{op} \to \mathrm{Sets}$ by \emph{left Kan extension}, i.e.,\ $X'([p]) = \colim_{[q] \to [p]} X([q])$, where the colimit is in Sets and is indexed by the category whose objects are morphisms $\theta: [q] \to [p]$ in $F$ and whose morphisms $\theta \to \theta'$ are morphisms $\phi: [q] \to [q']$ in $I$ such that $\theta' \circ \phi = \theta$.

The passage from $X: I^\mathrm{op} \to \mathrm{Sets}$ to $X': F^\mathrm{op} \to \mathrm{Sets}$ loses very little information; it is faithful and induces a bijection between isomorphisms $X \to Y$ and isomorphisms $X' \to Y'$.  Furthermore, if the geometric realization of a functor $F^\mathrm{op} \to \mathrm{Sets}$ is again defined using the formula~(\ref{eq:realization}) and all morphisms $\theta$ in $F$, then  there is a natural homeomorphism $|X| \approx |X'|$. Note, however, that this functor is not full.  There are typically more non-invertible morphisms $X' \to Y'$ than there are $X \to Y$.  For example the map $\Delta^1 \to \Delta^0$ is modeled by the unique morphism $F(-,[1]) \to F(-,[0])$.
\end{remark}

\begin{example}\label{ex:Dgn}
Let us return to the tropical moduli space $\Dgn$, which we defined in \S\ref{sec:graphs}.  To illustrate how the definitions of this section work for $\Dgn$, we will give two constructions that exhibit $\Dgn$ as the geometric realization of a generalized $\Delta$-complex.

Choose for each object $\Gmw \in J_{g,n}$ a bijection $\tau = \tau_\Gmw: E(\Gmw) \to [p]$ for the appropriate $p \geq 0$.  This chosen bijection shall be called the \emph{edge-labeling} of $\Gmw$.  Then a morphism $\phi: \Gmw \to \Gmw'$ determines an injection
\begin{equation*}
  [p'] \xrightarrow{\tau_{\Gmw'}^{-1}} E(\Gmw') \xrightarrow{\phi^{-1}} E(\Gmw)
  \xrightarrow{\tau_\Gmw} [p],
\end{equation*}
where the middle arrow is the induced bijection from the edges of $\Gmw'$ to the non-collapsed edges of $\Gmw$.  This gives a functor $F\colon J_{g,n}^\mathrm{op}\rightarrow I$ sending $\Gmw$ to the codomain $[p]$ of $\tau_\Gmw$, and hence induces a functor from $J_{g,n}^\mathrm{op}$ to generalized $\Delta$-complexes, given as $\Gmw \mapsto I(-,F(\Gmw))$, whose colimit $X=X_{g,n}$ has geometric realization homeomorphic to $\Dgn$.    
 
Basically, this is because $\Gmw \mapsto I(-,[p])$ gives an association of a $p$-simplex $|I(-,[p])|$ to $\Gmw$; here, $p+1$ is the number of edges of $\Gmw$.  Furthermore, if $\Gmw'\in J_{g,n}$ is an object with $p'+1$ edges, then the injection of label sets $[p']$ to $[p]$, determined as above by a morphism $\Gmw\to\Gmw'$, is the one that induces the appropriate gluing of the simplex $|I(-,F(\Gmw'))|$ to a face of $|I(-,F(\Gmw))|$.  In other words, it induces the gluing obtained by restricting the gluing of $\sigma(\Gmw')$ to a face of $\sigma(\Gmw)$ in the construction of $\Mgn$. 

The colimit $X\colon I^\mathrm{op} \to \mathrm{Sets}$ can be described
explicitly as follows. The elements of $X([p])$ are edge-labelings of $\Gmw\in J_{g,n}$ with the labels $[p]$, up to the equivalence relation given by the action of $\Aut(\Gmw)$ that permutes labels.  Here $\Gmw$ ranges over all objects in $J_{g,n}$ with exactly $p+1$ edges; recall that we have tacitly picked one element in each isomorphism class in $J_{g,n}$ to get a small category.

Next, for each injective map $\iota\colon [p']\to[p]$, define the following map $X(\iota) \colon X([p]) \to X([p'])$; given an element of $X([p])$ represented by $(\Gmw,\tau\colon E(\Gmw)\to [p])$, contract the edges of $\Gmw$ whose labels are not in $\iota([p'])\subset [p]$, then relabel the remaining edges with labels $[p']$ as prescribed by the map
$\iota$.  The result is a $[p']$-edge-labeling of some new object $\Gmw'$, and we set $X(\iota)(\Gmw)$ to be the element of $X([p'])$ corresponding to it.
\end{example}

\subsection{Generalized cone complexes and generalized $\Delta$-complexes}
\label{sec:cones}

We now briefly discuss the \emph{generalized cone complexes} of \cite[\S2]{acp} and their relationship to the generalized $\Delta$-complexes described here.  We will see that the category of generalized $\Delta$-complexes is equivalent to the category of {\em smooth} generalized cone complexes, by which we mean the category whose objects are generalized cone complexes built out of copies of standard orthants in $\R^n$, and whose arrows are face morphisms.  See below for a precise definition.

Recall that there is a category of \emph{cones} $\sigma$ and \emph{face morphisms} $\sigma \to \sigma'$.
A cone is a topological space $\sigma$ together with an ``integral structure'', i.e.,\ a finitely generated subgroup of the group of continuous functions $\sigma \to \R$ satisfying a certain condition. A face morphism $\sigma \to \sigma'$ is a continuous function satisfying another condition.  We shall not recall the details, since we shall not need the full category of all cones.  The special case we need is the standard orthant $(\R_{\geq 0})^{[p]} = \prod_0^p \R_{\geq 0}$ together with the abelian group $M$ generated by the $p+1$ projections $(\R_{\geq 0})^{[p]} \to \R$ onto the axes.  Any injective map $\theta: [p] \to [q]$ gives rise to an injective continuous map $(\R_{\geq 0})^{[p]} \to (\R_{\geq 0})^{[q]}$ by restricting the linear map sending the $i$th basis vector to the $\theta(i)$-th basis vector.  The face morphisms $(\R_{\geq 0})^{[p]} \to (\R_{\geq 0})^{[q]}$ are precisely the maps induced by such $\theta$.  In other words, if we write $C([p]) = (\R_{\geq 0})^{[p]}$ with this integral structure, we have defined a functor
\begin{align*}
  I & \to (\text{Cones},\text{face morphisms}) \\
  [p] & \mapsto C([p]),
\end{align*}
which is full and faithful.  Let us say that a cone is \emph{smooth} if it is isomorphic to $C([p])$ for some $p \geq 0$; then the category of smooth cones and face morphisms between them is \emph{equivalent} to $I$.

In \cite[\S2.6]{acp}, a \emph{generalized cone complex} is a topological space $X$ together with a presentation as $\colim (r \circ F)$, where $F: C \to (\text{Cones},\text{face morphisms})$ is a functor from a small category $C$, and $r$ denotes the forgetful functor from cones to topological spaces.  There is also a notion of morphism of generalized cone complexes.  We say that a generalized cone complex is \emph{smooth} if it is isomorphic to a colimit of smooth cones.

Let $X: I^\mathrm{op} \to \mathrm{Sets}$ be a generalized $\Delta$-complex.  We consider the category $C_X$, whose objects are pairs $([p],x)$ consisting of an object $[p] \in I$ and an element $x \in X([p])$, and whose morphisms $([p],x) \to ([q],y)$ are the morphisms $\theta \in I([p],[q])$ with $\theta^* y = x$.  Then there is a forgetful functor $u: C_X \to I$, and it is easy to verify that the geometric realization $|X|$ is precisely the colimit of the functor 
\begin{equation*}
  C_X \xrightarrow{u} I \xrightarrow{\Delta^\bullet} \mathrm{Top}
\end{equation*}
obtained by composing $u$ with $[p] \mapsto \Delta^p$.  If we instead take the colimit of the composition  from $C_X$ to $I$, to the category of cones, included into the category of generalized cone complexes, we get the generalized cone complex $\Sigma_X$ associated to $X$.

To define a correspondence in the other direction, we first extend the notion of ``face morphism'' between cones to morphisms between generalized cone complexes.  If $\Sigma$ is a smooth generalized cone complex and $\sigma$ is a smooth cone, let us say that a morphism $\sigma \to \Sigma$ is a \emph{face morphism} if it admits a factorization as $\sigma \to \sigma' \to \Sigma$, where the second map $\sigma' \to \Sigma$ is one of the cones in the colimit presentation of $\Sigma$ and the first map is a face morphism of cones.  If $\Sigma$ and $\Sigma'$ are generalized cone complexes, a morphism $\Sigma \to \Sigma'$ is a face morphism if the composition $\sigma \to \Sigma \to \Sigma'$ is a face morphism for all cones $\sigma \to \Sigma$ in the colimit presentation of $\Sigma$.  We may then define a functor $X_\Sigma: I^\mathrm{op} \to \mathrm{Sets}$ whose value $X_\Sigma([p])$ is the set of face morphisms $C([p]) \to \Sigma$.

These processes are inverse and give an equivalence of categories between generalized $\Delta$-complexes and the category whose objects are smooth generalized cone complexes and whose morphisms are face morphisms between such.  Geometrically, if $X:I^\mathrm{op} \to \mathrm{Sets}$ is a generalized unordered $\Delta$-complex, the geometric realization $|X|$ is the \emph{link} of the cone point in the corresponding generalized cone complex $\Sigma_X$.

\begin{remark}
The notion of \emph{morphism} between (smooth) generalized cone complexes used in \cite[\S2.6]{acp} contains many other morphisms, in addition to face morphisms.  For instance, the map $C([1]) = (\R_{\geq 0})^2 \to \R_{\geq 0}= C([0])$ given in coordinates as $(x_0,x_1) \mapsto x_0 + x_1$ is a morphism of cones and hence generalized cone complexes, but is not a face morphism.  These additional morphisms are necessary to make the construction of skeletons of toroidal varieties (and DM stacks) functorial with respect to arbitrary toroidal morphisms.
\end{remark}

\section{Cellular chains}
\label{sec:cellular-chains}

We now develop a theory of cellular chains and cochains for generalized $\Delta$-complexes, extending the usual theory of cellular chains and cochains for ordinary $\Delta$-complexes.  In general, the cellular homology of a generalized $\Delta$-complex does not agree with the singular homology of its geometric realization with integer coefficients (see Example~\ref{ex:halfintervalhomology}).  Nevertheless, we prove a comparison theorem for rational cellular and singular homology in Lemma~\ref{lem:cellularhomology}.  As explained in \S\ref{sec:residues}, cellular chains are the natural objects to consider when comparing the de Rham cohomology of a smooth open variety or DM stack with the homology of the dual complex of a normal crossings compactification using integrals of residues of logarithmic forms.  Cellular homology also offers computational advantages, since the groups of cellular chains on a generalized $\Delta$-complex typically have much smaller rank than other natural alternatives, such as the groups of simplicial chains on the barycentric subdivision.

\medskip

When $X$ is a $\Delta$-complex, the calculation of the singular homology $H_*(|X|; \Z)$ via cellular chains (sometimes called simplicial chains) is well-known; see, for instance, \cite[\S2]{Hatcher02}. In terms of the functor $X: \Delta_\inj^\mathrm{op} \to \mathrm{Sets}$, the cellular chain groups are given by
\begin{equation*}
  C_p(X) = \Z X([p]),
\end{equation*}
the free abelian group on the set $X([p])$, and the boundary map is given by the alternating sum
\begin{equation}\label{eq:boundary}
  \partial = \sum_{i=0}^p (-1)^i (d_i)_*: \Z X([p]) \to \Z X([p-1]).
\end{equation}
Cellular cochains are defined similarly and their cohomology is canonically isomorphic to $H^*(|X|;\Z)$.

\subsection{Cellular chains for generalized $\Delta$-complexes} \label{sec:generalizedcellular}

Similar formulas compute the homology and cohomology (resp. rational homology and cohomology) of $|X|$ when $X$ is an unordered $\Delta$-complex (resp. generalized $\Delta$-complex).  

Let $X$ be a generalized $\Delta$-complex, and let $\Z^\mathrm{sign}$ denote the sign representation of the symmetric group $S_{p+1}$ on the abelian group $\Z$.  

\begin{definition}
The group of cellular $p$-chains $C_p(X)$ is the group of coinvariants
\begin{equation*}
  C_p(X) = (\Z^\mathrm{sign} \otimes_\Z \Z X([p])) _{S_{p+1}},
\end{equation*}
where $\Z X([p])$ denotes the free abelian group on the set $X([p])$.  
\end{definition}

The boundary map $C_p(X) \to C_{p-1}(X)$ is defined using the diagram
\begin{equation*}
  \xymatrix{
    {\Z X([p])} \ar[rr]^-{\sum (-1)^i (d_i)_*} \ar@{->>}[d] && {\Z X([p-1])}\ar@{->>}[d]\\
    {C_p(X)} \ar[rr]^-{\partial} && C_{p-1}(X).
  }
\end{equation*}

Similarly, we define cellular chains with coefficients in an abelian group $A$ as $$C_p(X;A) = C_p(X) \otimes_\Z A,$$ and define cochains by dualizing;
\begin{equation*}
  C^p(X;A) = \Hom_\Z(C_p(X),A) = (\Hom_\Z((\Z^\mathrm{sign}\otimes \Z X([p]),A))^{S_{p+1}}.
\end{equation*}
Here, the superscript denotes the invariants for the group action.  In other words, $C^p(X;A)$ is the abelian group consisting of all set maps $\phi: X([p]) \to A$ which satisfy $\phi (\sigma x) = \mathrm{sgn}(\sigma) \phi(x)$ for all $x \in X([p])$ and all $\sigma \in S_{p+1}$.

When $X$ is an unordered $\Delta$-complex, the cellular chain complex $C_*(X)$ computes the singular homology of the geometric realization $|X|$; see Lemma~\ref{lem:cellularhomology}(1).  For arbitrary generalized $\Delta$-complexes, the cellular chain complex $C_*(X)$ is not quite so well-behaved with integer coefficients, essentially because the construction $(\Z^\mathrm{sign} \otimes_\Z -)_{S_{p+1}}$ does not have good exactness properties, as in the following example.
  
\begin{example}\label{ex:halfintervalhomology}
Let $X$ be the {\em half interval} of Example~\ref{ex:halfinterval}, with $X([0])= \{v\}$, $X([1]) = \{e\}$, and $X([p])=\emptyset$ for $p>1$.  Then
$$C_1(X;\Z) = (\Z^\mathrm{sign} \otimes \Z \!\cdot\!e)/\langle 2\otimes e\rangle \cong \Z/2\Z,$$ and $\partial\colon C_1(X;\Z) \to C_0(X;\Z)$ is zero; so $H_1(X;\Z) = \Z/2\Z$.  In contrast, $H_1(|X|;\Z) = 0$.
\end{example}

\noindent Nevertheless, we have the following comparison result for rational cellular and singular homology of arbitrary generalized $\Delta$-complexes, along with the integral comparison result for unordered $\Delta$-complexes.

\begin{lemma}  \label{lem:cellularhomology}
  The boundary homomorphism on cellular chains satisfies $\partial^2 = 0$.  Furthermore
  \begin{enumerate}
  \item If $X$ is an unordered $\Delta$-complex, there are natural isomorphisms
    \begin{align*}
      H_p(|X|;A) &\cong H_p(C_*(X;A),\partial), \mbox{ and} \\
      H^p(|X|;A) &\cong H^p(C^*(X;A),\partial),
    \end{align*}
    for any abelian group $A$.
  \item If $X$ is any generalized $\Delta$-complex, there are
    natural isomorphisms
    \begin{align*}
      H_p(|X|;\Q) &\cong H_p(C_*(X;\Q), \partial), \mbox{ and}\\
      H^p(|X|;\Q) &\cong H^p(C^*(X;\Q), \partial).
    \end{align*}
  \end{enumerate}
\end{lemma}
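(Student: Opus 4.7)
The plan is to prove the three assertions in turn, treating well-definedness and $\partial^2 = 0$ first, then case (1), then case (2).

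\textbf{Well-definedness and $\partial^2=0$.} I would begin by checking that the formula $\partial(1 \otimes x) = \sum_{i=0}^p (-1)^i (1 \otimes d_i x)$ descends from $\Z^{\mathrm{sign}} \otimes \Z X([p])$ to coinvariants. For $\sigma \in S_{p+1}$, the injective map $\sigma \circ \delta^i \colon [p-1] \to [p]$ has image avoiding $\sigma(i)$, so it factors uniquely as $\delta^{\sigma(i)} \circ \tau_i$ for some $\tau_i \in S_p$; the standard cyclic-shift identity gives $\mathrm{sgn}(\tau_i) = (-1)^{\sigma(i)-i}\,\mathrm{sgn}(\sigma)$. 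Combined with the contravariance $d_i \circ \sigma^* = \tau_i^* \circ d_{\sigma(i)}$ and the change of variables $j = \sigma(i)$, this gives the equivariance needed for $\partial$ to descend. The identity $\partial^2=0$ then follows on the nose from the usual simplicial relation $d_i d_j = d_{j-1} d_i$ for $i<j$, which holds already on $\Z X([p])$ and descends.

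\textbf{Part (1).} For an unordered $\Delta$-complex, freeness of the $S_{p+1}$-action on $X([p])$ means that choosing one representative in each orbit exhibits $|X|$ as a genuine CW complex with one $p$-cell per orbit, characteristic map given by the representative $x \colon \Delta^p \to |X|$. Correspondingly, $(\Z^{\mathrm{sign}} \otimes \Z X([p]))_{S_{p+1}}$ is the free abelian group on orbits twisted by sign: a representative corresponds to an orientation of the cell, and the sign twist encodes dependence on that choice. So $C_p(X;A)$ is canonically isomorphic to the usual CW cellular $p$-chains. Matching the boundary operators reduces to the standard computation that the degree of the attaching map at the $i$-th face is $(-1)^i$ in this orientation convention, so invoking the CW version of the singular homology theorem gives (1); the cohomology version follows by applying $\mathrm{Hom}(-, A)$.

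\textbf{Part (2).} For general $X$, I would use the skeletal filtration $\emptyset = |X|^{(-1)} \subset |X|^{(0)} \subset \cdots$ described earlier in the section. The pushout presentation of $|X|^{(p)}$ from $|X|^{(p-1)}$ yields
\[
H_*\bigl(|X|^{(p)}, |X|^{(p-1)}; \Q\bigr) \cong \bigoplus_{[x]\in X([p])/S_{p+1}} H_*\bigl(\Delta^p/H_x,\, \partial\Delta^p/H_x;\, \Q\bigr),
\]
where the sum is over $S_{p+1}$-orbits with chosen representatives $x$. The quotient $\Delta^p/H_x$ is contractible (the barycenter is $S_{p+1}$-fixed), so the relative homology is concentrated in degree $p$ and equals $H_p(S^p/H_x;\Q)$; by the transfer isomorphism for finite group actions with rational coefficients this equals $(H_p(S^p;\Q))^{H_x} = (\Q^{\mathrm{sign}})^{H_x}$. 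Writing $\Q X([p]) \cong \bigoplus_{[x]} \Q[S_{p+1}/H_x]$ and using Frobenius reciprocity together with the fact that rational coinvariants for a finite group coincide with invariants, one checks $C_p(X;\Q) \cong \bigoplus_{[x]} (\Q^{\mathrm{sign}})^{H_x}$, matching the $E^1$-page of the skeletal spectral sequence. The spectral sequence collapses at $E^2$ in this setting, and naturally yields $H_p(|X|;\Q) \cong H_p(C_*(X;\Q),\partial)$; the cohomology statement follows by dualizing.

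\textbf{Main obstacle.} The essential delicate step is identifying the $d^1$ differential of the skeletal spectral sequence with the combinatorial $\partial$ on $C_*(X;\Q)$. One must verify that the composite $(\Delta^p, \partial\Delta^p) \to (\Delta^p/H_x, \partial\Delta^p/H_x) \to (|X|^{(p)}, |X|^{(p-1)})$ pushes the fundamental class to a sign-compatible boundary that reproduces the alternating sum of face inclusions. The cyclic-shift identity for $\mathrm{sgn}(\tau_i)$ from the first step is precisely what makes the signs match on each orbit, after which the rest is standard spectral-sequence bookkeeping.
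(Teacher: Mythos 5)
Your proposal is correct and takes essentially the same route as the paper's proof: both filter $|X|$ by skeleta, identify $H_p(|X|^{p},|X|^{p-1})$ with $C_p(X)$ (exactly in the unordered case by freeness of the $S_{p+1}$-action, rationally in the general case by observing that both sides are direct sums of $\Q$ over orbits with alternating stabilizer), and then pin down the $d^1$ differential. The only cosmetic difference is that you justify the rational identification orbit-by-orbit via the transfer isomorphism and Frobenius reciprocity, whereas the paper asserts the direct-sum description on both sides more tersely and handles the boundary map by a naturality reduction to $X = I(-,[p])$; these are variants of the same computation, not a genuinely different argument.
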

\begin{proof}[Proof sketch]
In both cases, the argument is similar to the classical proof for $\Delta$-complexes.  We filter the geometric realization $|X|$ by its ``skeletons'' $|X|^{p}$, where $|X|^{p}$ is the image of $\coprod_{i=0}^p X([p]) \times \Delta^i \to |X|$.  The map $X([p]) \times \Delta^p \to |X|^p$ induces a homeomorphism to the quotient space $|X|^p/|X|^{p-1}$ from the orbit space 
\begin{equation*}
    \bigg(\frac{\Delta^p \times X([p])}{\partial \Delta^p \times
      X([p])} \bigg) \Big / S_{p+1}
\end{equation*}
and hence we have a natural map
\begin{equation*}
    (\Z X([p]) \otimes H_p(\Delta^p, \partial \Delta^p))_{S_{p+1}} \to
    H_p(|X|^{p},|X|^{p-1}).
\end{equation*}
If the action of $S_{p+1}$ on $X([p])$ is free, this map is an isomorphism and $H_*(|X|^{p}, |X|^{p-1}) = 0$ for $* \neq p$, since the quotient $|X|^p/|X|^{p-1}$ becomes a wedge of $p$-spheres, one for each $S_{p+1}$-orbit in $X([p])$.  If the action is not free, this argument still applies rationally, since both sides will be a direct sum of copies of $\Q$, one for each $S_{p+1}$-orbit of elements $x \in X([p])$ whose stabilizer is contained in the alternating group.

This proves that $H_*(|X|)$ is calculated by a chain complex with groups $$H_p(|X|^p,|X|^{p-1}) \cong C_p(X),$$ and it remains to check that the boundary map is as claimed.  By linearity it suffices to verify this on generators, i.e.,\ elements of $X([p])$, and any such element is in the image of some natural transformation $I(-,[p]) \to X$.  Both the claimed formula and the actual boundary map define natural transformations $C_p(-) \to C_{p-1}(-)$ and hence suffices to prove that they agree in the case $X = I(-,[p])$. This case, in which $|X| = \Delta^p$, is proved in the same way as for $\Delta$-complexes.
\end{proof}

If $X$ is an unordered $\Delta$-complex, each element $x \in X([p])$ gives an element of $C_p(X)$, and as $x$ runs through a set of representatives for $S_{p+1}$-orbits, these elements form a basis for the free $\Z$-module $C_p(X)$.  Different choices of orbit representatives give rise to bases of $C_p(X)$ which differ only by multiplying some basis vectors by $-1$.  For generalized $\Delta$-complexes we have the following analogous result for $C_p(X)$.

\begin{lemma}  \label{lem:cellularbasis}
  Let $X$ be a generalized $\Delta$-complex.   Then $$C_p(X,\Z) \cong \Z^\alpha \oplus (\Z/2\Z)^\beta,$$ where
\begin{enumerate}
\item a basis for $\Z^\alpha$ is the set of classes $[x]\in C_p(X)$ as $x\in X([p])$ runs through a set of representatives of $S_{p+1}$-orbits whose stabilizers are contained in the alternating group, and
\item a basis for $(\Z/2\Z)^\beta$ is the set of classes $[x]\in C_p(X)$ as $x\in X([p])$ runs through a set of representatives of $S_{p+1}$-orbits whose stabilizers are not contained in the alternating group.
\end{enumerate}
In particular, a basis for $C_p(X,\Q)$ consists of elements $[x]$ as $x\in X([p])$ runs through a set of representatives of $S_{p+1}$-orbits whose stabilizers are contained in the alternating group.
\end{lemma}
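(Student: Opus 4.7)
The plan is to decompose $C_p(X,\Z)$ orbit by orbit and then evaluate each summand using a standard computation with induced modules and coinvariants. First, writing $X([p]) = \bigsqcup_O O$ as a disjoint union of $S_{p+1}$-orbits, we obtain a $\Z[S_{p+1}]$-module decomposition $\Z X([p]) = \bigoplus_O \Z O$. Since both $\Z^{\mathrm{sign}} \otimes_{\Z} -$ and $(-)_{S_{p+1}}$ are left adjoints, they commute with direct sums, yielding
\[
C_p(X,\Z) \;\cong\; \bigoplus_{O} \bigl(\Z^{\mathrm{sign}} \otimes_{\Z} \Z O\bigr)_{S_{p+1}}.
\]

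Next, I would compute each orbit summand. Fix a representative $x \in O$ with stabilizer $H_x \leq S_{p+1}$, so that $\Z O \cong \mathrm{Ind}_{H_x}^{S_{p+1}} \Z$ as $\Z[S_{p+1}]$-modules. The projection formula gives
\[
\Z^{\mathrm{sign}} \otimes_{\Z} \mathrm{Ind}_{H_x}^{S_{p+1}} \Z \;\cong\; \mathrm{Ind}_{H_x}^{S_{p+1}}\bigl(\mathrm{Res}_{H_x}^{S_{p+1}} \Z^{\mathrm{sign}}\bigr),
\]
and the coinvariants of an induced module coincide with the coinvariants of the restricted module over the subgroup (a direct consequence of $\Z \otimes_{\Z[G]} (\Z[G] \otimes_{\Z[H]} M) = M_H$), so
\[
\bigl(\Z^{\mathrm{sign}} \otimes_{\Z} \Z O\bigr)_{S_{p+1}} \;\cong\; \bigl(\Z^{\mathrm{sign}}\bigr)_{H_x}.
\]
Alternatively, this can be seen by hand: the class $[1 \otimes x]$ generates the coinvariants, since every other $1 \otimes \sigma x$ is equivalent to $\pm(1 \otimes x)$; and the only resulting relations come from the stabilizer, namely $[1 \otimes x] = \mathrm{sign}(g)\,[1 \otimes x]$ for $g \in H_x$.

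Finally, I would observe that $(\Z^{\mathrm{sign}})_{H_x}$ is $\Z$ when $H_x$ is contained in the alternating group $A_{p+1}$ (the action is then trivial) and is $\Z/2\Z$ otherwise (any $\sigma \in H_x$ with $\mathrm{sign}(\sigma) = -1$ forces the relation $2 = 0$). Tracing through the isomorphisms identifies the generator of each summand with $[x] \in C_p(X,\Z)$, which yields the claimed decomposition of $C_p(X,\Z)$. The statement about $C_p(X,\Q)$ then follows by tensoring with $\Q$, which annihilates the $(\Z/2\Z)^\beta$ summand while preserving the $\Q^\alpha$ part and its distinguished basis.

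The argument is essentially a direct computation; the one step requiring a modest amount of care is the identification $(\Z^{\mathrm{sign}} \otimes \Z O)_{S_{p+1}} \cong (\Z^{\mathrm{sign}})_{H_x}$, which can be packaged through the projection formula plus the coinvariants-of-induced identity, or carried out explicitly with relations as above. There is no serious obstacle once the orbit decomposition is in place.
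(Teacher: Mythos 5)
Your proof is correct and, at its core, takes the same approach as the paper: decompose $\Z X([p])$ into $S_{p+1}$-orbits and observe that the coinvariants of $\Z^{\mathrm{sign}} \otimes \Z O$ produce $\Z$ or $\Z/2\Z$ according to whether the stabilizer lies in $A_{p+1}$, the relation $2[x]=0$ arising precisely when an odd permutation stabilizes $x$. The paper phrases this directly in terms of the generating relations $[x]-\operatorname{sgn}(\sigma)[\sigma x]$ (matching your ``by hand'' alternative), whereas you additionally package the orbit computation via the projection formula and the coinvariants-of-induced-modules identity; this is a cosmetic difference only.
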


\begin{proof}
This follows from the fact that the relations defining the group of coinvariants $(\Z^\mathrm{sign} \otimes_\Z \Z X([p])) _{S_{p+1}}$ as a quotient of $\Z^\mathrm{sign} \otimes_\Z \Z X([p])$ are generated by those of the form $[x] - \operatorname{sgn}(\sigma)[\sigma x]$ for $x\in X([p])$.  In particular, $[x]+[x]$ is a relation if and only if $x$ has an odd permutation in its stabilizer.
\end{proof}

\subsection{Cellular chains of coequalizers}
\label{sec:chains-of-coequalizers}

We have seen that coequalizers of generalized $\Delta$-complexes exist and commute with geometric realization.  Let us discuss the behavior of cellular chain complexes under taking coequalizer.
\begin{lemma}
Let $f,g: X \double Y$ be two parallel morphisms of generalized $\Delta$-complexes and let $Y \to Z$ be the coequalizer in generalized $\Delta$-complexes.  Then there is an exact sequence of cellular chain complexes
\begin{equation*}
    C_p(X;A) \xrightarrow{g_* - f_*} C_p(Y;A) \to C_p(Z;A) \to 0,
\end{equation*}
and similarly for cohomology.
\end{lemma}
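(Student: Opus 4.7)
The plan is to show that $C_p(-;A)$ is a right-exact functor from generalized $\Delta$-complexes to abelian groups, and then simply apply it to the defining coequalizer $X \rightrightarrows Y \to Z$. A right-exact functor carries such a coequalizer to an exact sequence of the stated form, so once right-exactness of $C_p(-;A)$ is established the claim for chains is immediate, and the claim for cochains will follow from left-exactness of $\Hom_\Z(-,A)$.

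The first step is to unpack $C_p(-;A)$ as a composition of functors whose exactness properties are easy to check. Since generalized $\Delta$-complexes are presheaves on $I$, colimits are computed objectwise; in particular evaluation at $[p]$ preserves coequalizers, and by naturality of $f$ and $g$ with respect to the automorphism group $\Aut_I([p]) = S_{p+1}$ the resulting maps $f_p, g_p\colon X([p]) \rightrightarrows Y([p])$ are automatically $S_{p+1}$-equivariant. I would then pass through the following chain: the free $\Z[S_{p+1}]$-module functor $S \mapsto \Z S$ on $S_{p+1}$-sets (a left adjoint, hence preserves all colimits); tensoring with $\Z^{\mathrm{sgn}}$ over $\Z$ (exact, since $\Z^{\mathrm{sgn}}$ is free of rank one over $\Z$); the $S_{p+1}$-coinvariants functor $(-)_{S_{p+1}}$ (a left adjoint to the trivial-action functor, hence right exact); and finally $-\otimes_\Z A$ (right exact). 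The composite is right exact, so applying it to the coequalizer $X \rightrightarrows Y \to Z$ yields the exact sequence
\[
C_p(X;A) \xrightarrow{g_* - f_*} C_p(Y;A) \to C_p(Z;A) \to 0.
\]

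For cohomology, I would apply $\Hom_\Z(-,A)$, which is left exact, to the exact sequence of chain groups just obtained. This produces the dual left-exact sequence
\[
0 \to C^p(Z;A) \to C^p(Y;A) \xrightarrow{g^* - f^*} C^p(X;A),
\]
which is the cohomological analogue stated in the lemma.

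There is no serious obstacle here: the statement is formal once one knows that coequalizers of presheaves are computed objectwise and that each of the intervening functors is either a left adjoint or is right exact for a standard reason. The only bookkeeping is to verify that equivariance is preserved at each step, which is automatic because $\Z^{\mathrm{sgn}} \otimes \Z(-)$ and coinvariants are $S_{p+1}$-natural constructions; this is the one place a careless argument could go wrong, but no real content is involved.
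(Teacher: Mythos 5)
Your proposal is correct and follows essentially the same route as the paper: both compute $Z([p])$ as the objectwise coequalizer, pass to free abelian groups via a left-adjoint argument to obtain the exact sequence with $g_*-f_*$, then apply the right-exact functors $\Z^{\mathrm{sign}}\otimes_\Z(-)$, $S_{p+1}$-coinvariants, and $-\otimes_\Z A$, and finally dualize with $\Hom_\Z(-,A)$ for the cochain statement. The only difference is cosmetic: you package the argument as ``$C_p(-;A)$ is a composite of right-exact functors'' and spell out the equivariance and the cochain case a bit more fully, whereas the paper states the same steps more tersely.
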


\begin{proof}
The $S_{p+1}$-set $Z([p])$ is the coequalizer of $X([p]) \double Y([p])$.  Taking free abelian groups gives a coequalizer diagram $\Z X([p]) \double \Z Y([p]) \to \Z Z([p])$ in the category of abelian groups (because ``free abelian group'' is left adjoint to the forgetful functor), which is equivalent to the exact sequence
\begin{equation*}
    \Z X([p]) \xrightarrow{g_* - f_*} \Z Y([p]) \to \Z Z([p]) \to 0.
\end{equation*}
Tensoring this exact sequence of $\Z[S_{p+1}]$-modules with $\Z^\mathrm{sign}$, taking $S_{p+1}$-coinvariants, and tensoring with $A$ are all right exact functors, which gives the exact sequence claimed in the lemma.
\end{proof}

\begin{corollary}
Let $f,g: X \double Y$ be maps of generalized $\Delta$-complexes.  Then the rational singular homology of the coequalizer of $|X| \double |Y|$ in topological spaces is calculated by the chain complex defined by the degreewise cokernel of cellular chains $g_* - f_*: C_*(X;\Q) \to C_*(Y;\Q)$.  Similarly the rational cohomology of $|Z|$ is calculated by the degreewise kernel of cellular cochains $g^* - f^*: C^*(Y;\Q) \to C^*(X;\Q)$.
\end{corollary}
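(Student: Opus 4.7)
The plan is to assemble three results that have already been established in this section. Let $Z$ denote the coequalizer of $f, g \colon X \double Y$ computed in the category of generalized $\Delta$-complexes. First, I would invoke the lemma stating that geometric realization preserves colimits (in particular, coequalizers), which yields a natural homeomorphism between $|Z|$ and the topological coequalizer of $|X| \double |Y|$. It therefore suffices to compute $H_*(|Z|;\Q)$ and $H^*(|Z|;\Q)$ in the desired form.

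Next, by Lemma~\ref{lem:cellularhomology}(2), these rational (co)homology groups are computed by the cellular (co)chain complex $(C_*(Z;\Q), \partial)$ and its dual $(C^*(Z;\Q),\partial)$. The remaining task is then purely algebraic: identify $C_*(Z;\Q)$ degreewise with the cokernel of $g_* - f_*$, and $C^*(Z;\Q)$ degreewise with the kernel of $g^* - f^*$. The homology half is immediate from the preceding lemma, which provides the right-exact sequence
\[
C_p(X;\Q) \xrightarrow{g_* - f_*} C_p(Y;\Q) \to C_p(Z;\Q) \to 0,
\]
identifying $C_p(Z;\Q)$ with the cokernel in question. Naturality of the cellular boundary map with respect to morphisms of generalized $\Delta$-complexes shows that the differential induced on this cokernel agrees with the intrinsic differential on $C_*(Z;\Q)$, so the cokernel chain complex is $C_*(Z;\Q)$ itself.

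For the cohomological statement, I would apply the exact functor $\Hom_\Q(-,\Q)$ to the above right-exact sequence of $\Q$-vector spaces. Exactness of $\Hom_\Q(-,\Q)$ on $\Q$-vector spaces converts a right-exact sequence into a left-exact one, yielding
\[
0 \to C^p(Z;\Q) \to C^p(Y;\Q) \xrightarrow{g^* - f^*} C^p(X;\Q),
\]
which exhibits $C^p(Z;\Q)$ as the degreewise kernel of $g^* - f^*$. Combined with Lemma~\ref{lem:cellularhomology}(2), this gives the cohomological conclusion.

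There is no serious obstacle: the corollary is a formal consequence of the three preceding inputs. The one point worth flagging is why the statement is restricted to rational coefficients. The algebraic argument goes through with any abelian group $A$ in place of $\Q$ in the homology statement, but the comparison between cellular and singular (co)homology only holds integrally for unordered $\Delta$-complexes; for general generalized $\Delta$-complexes it genuinely requires rational coefficients, as Example~\ref{ex:halfintervalhomology} shows. Hence the rational hypothesis is used precisely and only in the invocation of Lemma~\ref{lem:cellularhomology}(2).
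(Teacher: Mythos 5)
Your proposal is correct and follows essentially the same line of reasoning as the paper: invoke the lemma that geometric realization commutes with coequalizers to reduce to computing $H_*(|Z|;\Q)$, apply the rational comparison theorem between cellular and singular homology, and use the right-exact sequence of cellular chains from the preceding lemma to identify $C_*(Z;\Q)$ with the degreewise cokernel. Your explicit dualization via $\Hom_\Q(-,\Q)$ to handle the cochain side is a slight elaboration of what the paper compresses into ``and similarly for cohomology,'' but it is the same argument in substance.
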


\begin{proof}
We have seen that the coequalizer of $|X| \double |Y|$ is homeomorphic to the geometric realization of the coequalizer $Y \to Z$ of $f$ and $g$ in generalized $\Delta$-complexes.  The lemma above calculates the rational homology of $Z$ as the homology of the degreewise cokernel of $g_* - f_*$ and similarly for rational cohomology.
\end{proof}

\section{Boundary complexes} \label{sec:stackboundary}

The theory of dual complexes for simple normal crossings divisors is well-known; these are regular unordered $\Delta$-complexes that have been extensively studied in algebraic geometry.  Many applications involve the fact that the homotopy types of \emph{boundary complexes}, the dual complexes of boundary divisors in simple normal crossings compactifications, are independent of the choice of compactification.  Boundary complexes were introduced and studied by Danilov in the 1970s \cite{Danilov75}, and have become an important focus of research activity in the past few years, with new connections to Berkovich spaces, singularity theory, geometric representation theory, and the minimal model program.  See, for instance, \cite{Stepanov06, Thuillier07, Stepanov08, ABW13, Kollar13, boundarycx, KapovichKollar14, KollarXu16, Simpson16, deFernexKollarXu17}.

In order to apply combinatorial topological properties of $\Dgn$ to study the moduli space of curves $\cM_{g,n}$ using the compactification by stable curves, we must account for the fact that the boundary divisor in $\ocM_{g,n}$ has normal crossings, but not simple normal crossings; its irreducible components have self-intersections and the fundamental groups of strata act nontrivially by monodromy on the analytic branches of the boundary.

In this section we explain how dual complexes of normal crossings divisors are naturally interpreted as generalized $\Delta$-complexes and, in particular, the dual complex of the boundary divisor in the stable curves compactification of $\cM_{g,n}$ is naturally identified with $\Dgn$.  We emphasize that the 
subtleties that arise come from generalizing from simple normal crossings to normal crossings divisors; passing from varieties to stacks is relatively straightforward.  

The material presented in \S\ref{sec:dual-compl-simple}-\ref{ss:homotopyinvariance} is primarily a reframing of the main results from \cite{acp} in the language of generalized $\Delta$-complexes which relies, in turn, on Thuillier's theory of skeletons for toroidal embeddings \cite{Thuillier07}.  In \S\ref{sec:simplicialinvariance} we prove an additional invariance result comparing the rational homology of the boundary complex to that of the geometric realization of the simplicial object in unordered $\Delta$-complexes associated to an \'etale cover by a smooth variety in which the preimage of the boundary divisor has simple normal crossings, and in \S\ref{sec:topweight} we state the generalization to DM stacks of the standard comparison theorem for top weight cohomology of varieties.  This generalization is proved in the appendix.

\subsection{Dual complexes of simple normal crossings divisors} \label{sec:dual-compl-simple}

We begin by recalling the notion of dual complexes of simple normal crossings divisors, using the language of regular unordered $\Delta$-complexes introduced in \S\ref{sec:cellular}.  In \S\ref{sec:dual-compl-norm}, we will explain how to interpret dual complexes of normal crossings divisors in smooth Deligne--Mumford (DM) stacks as {\em generalized} $\Delta$-complexes. Here and throughout, all of the varieties and stacks that we consider are over the complex numbers, and all stacks are separated and DM.

Let $X$ be a smooth variety.  Recall that a divisor $D \subset X$ has \emph{normal crossings} if it is formally locally isomorphic to a union of coordinate hyperplanes in affine space.  It has {\em simple normal crossings} if the irreducible components of $D$ are smooth. 
Recall that the {\em strata} of $D$ may be defined inductively as follows.  Suppose $\dim X = d$; then the $(d-1)$-dimensional strata of $D$ are the irreducible components of the regular locus of $D$; and for each $i<d-1$, the $i$-dimensional strata are the irreducible components of the regular locus of $D\setminus(D_{d-1}\cup\cdots\cup D_{i+1})$, where $D_j$ temporarily denotes the union of the $j$-dimensional strata of $D$.

If $D \subset X$ has simple normal crossings, then the dual complex $\Delta(D)$ is naturally understood as a regular unordered $\Delta$-complex whose geometric realization has one vertex for each irreducible component of $D$, one edge for each irreducible component of a pairwise intersection, and so on.  The inclusions of faces correspond to containments of strata.  Equivalently, using our characterization of unordered $\Delta$-complexes in terms of presheaves on the category $I$ given in \S\ref{sec:symmetric}, $\Delta(D)$ is the presheaf whose value on $[p]$ is the set of pairs $(Y, \phi)$, where $Y \subset D$ is a stratum of codimension $p$, i.e., codimension $p + 1$ in $X$, and $\phi$ is an ordering of the components of $D$ that contain $Y$, with maps induced by containments of strata. Dual complexes can also be defined in exactly the same way for simple normal crossings divisors in DM stacks.

\begin{remark}
In the literature, it is common to fix an ordering of the irreducible components of the simple normal crossings divisor $D$.  The corresponding ordering of the vertices induces a $\Delta$-complex structure on $\Delta(D)$.  Working with dual complexes as unordered $\Delta$-complexes is more natural, since it avoids this choice of an ordering, and is a special case of the construction of dual complexes for divisors with normal crossings (but not necessarily simple normal crossings) as generalized $\Delta$-complexes, given in \S\ref{sec:dual-compl-norm}.
\end{remark}

\subsection{Dual complexes of normal crossings divisors}
\label{sec:dual-compl-norm}

We now discuss the generalization to normal crossings divisors $D$ in a smooth DM stack $X$ which are not necessarily simple normal crossings, i.e.,\ the irreducible components of $D$ are not necessarily smooth and may have self-intersections.  This situation is more subtle, even for varieties, due to monodromy; the fundamental groups of strata may act by nontrivial permutations on the local analytic branches of the boundary divisor.  Note that, in the stack case, when the boundary strata have stabilizers, this monodromy action may be nontrivial even for zero-dimensional strata. This phenomenon appears already at the zero-dimensional strata of $\overline {\mathcal{M}}_{g}$ given by stable curves having nontrivial automorphisms, i.e.,\ the strata corresponding to (unweighted) trivalent graphs of first Betti number $g$ with nontrivial automorphisms.

Let $X$ be a smooth variety or DM stack.  Recall that a divisor $D \subset X$ has normal crossings if and only if there is an \'etale cover by a smooth variety $X_0 \rightarrow X$ in which the preimage of $D$ is a divisor with simple normal crossings.  Note that this \'etale local characterization of normal crossings divisors is the same for varieties and DM stacks.

Following \cite{acp}, the dual complex may be defined \'etale locally, in the following way. Choose a surjective \'etale map $X_0 \to X$ for which $D_0 = D \times_X X_0$ has simple normal crossings, set $X_1 = X_0 \times_X X_0$ and $D_1 = D \times_X X_1$, which has simple normal crossings, in $X_1$. The two projections $D_1 \double D_0$ give rise to two maps of unordered $\Delta$-complexes
\begin{equation}\label{eq:5}
  \Delta(D_1) \double \Delta(D_0),
\end{equation}
and we define $\Delta(D)$ to be the coequalizer of those two maps, in the category of generalized $\Delta$-complexes.  It is shown in \cite{acp} (in the language of generalized cone complexes) that, up to isomorphism, the resulting generalized $\Delta$-complex does not depend on the choice of $X_0 \to X$.  This recipe makes sense also in the more general case where $X$ is a smooth DM stack and $D \subset X$ is a normal crossings divisor, using a sufficiently fine \'etale atlas $X_0 \to X$, and $\Delta(D)$ is independent of the choice of $X_0 \to X$ in this generality as well.

We now give an equivalent, and more direct, description of $\Delta(D)$ as a functor $I^\mathrm{op} \to  \mathrm{Sets}$.  Let $\EE \to D$ be the \emph{normalization} of $D$.  Note that $\EE$ is smooth because $D$ has normal crossings.  In the special case where $D$ has simple normal crossings, $\EE$ is simply the disjoint union of the irreducible components of $D$.  Let us write $\EEE{p} = \EE \times_X \dots \times_X \EE$ for the $(p+1)$-fold iterated fiber product, and $\FFF{p} \subset \EEE{p}$ for the open subset consisting of $(p+1)$-tuples of distinct points whose image in $D$ lies in a stratum of codimension $p$. We then  define $\Delta(D)$ to be the functor $I^\mathrm{op} \to\mathrm{Sets}$ which sends $[p]$ to the set of irreducible components of $\FFF{p}$.

It is not difficult to show that this generalized cone complex $\Delta(D)$ agrees with the unordered $\Delta$-complex defined in \S\ref{sec:dual-compl-simple} when $D$ has simple normal crossings.  Indeed, in this special case, if $D$ has smooth components $D_0, \ldots, D_r$ then $\FFF{p}$ may be indentified with a dense open subset of the variety
  \begin{equation*}
    \coprod_{i: [p] \to [r] } D_{i(0)} \cap \dots \cap D_{i(p)},
  \end{equation*}
where the disjoint union is over injective maps $[p] \to [r]$.  Each $D_i = D_{i(0)} \cap \dots \cap D_{i(p)} \subset X$ is smooth and has codimension $p+1$ in $X$, but need not be connected.  Each component of $D_i$ is the closure of precisely one stratum $Y = Y_i \subset D$ of codimension $p+1$, and the function $i$ gives an ordering on the $p+1$ components of $D$ which contain $Y$.  Hence we have produced a bijection to the set of $p$-simplices of the unordered $\Delta$-complex described in \S\ref{sec:dual-compl-simple}, and it is easy to see that this bijection is natural with respect to maps $[p'] \to [p]$ in $I$.

\begin{lemma}\label{lemma:etale-descent}
The association $D \mapsto \Delta(D)$ satisfies \'etale descent in the sense that if $X_0 \to X$ is an \'etale cover, $X_1 =X_0 \times_X X_0$, $D_0 = D \times_X X_0$, $D_1 = D\times_X X_1$, then
\begin{equation*}
       \Delta(D_1) \double \Delta(D_0) \to\Delta(D)
\end{equation*}
is a coequalizer diagram. 
\end{lemma}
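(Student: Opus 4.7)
The plan is to reduce the étale descent statement to the corresponding descent statement for irreducible components of a smooth stack, using the intrinsic description of $\Delta(D)$ via normalizations given just before the lemma. Recall from \S\ref{sec:chains-of-coequalizers} that coequalizers of generalized $\Delta$-complexes are computed level-wise (on underlying $S_{p+1}$-sets). So it suffices to show that, for each $p \geq 0$,
\[
\Delta(D_1)([p]) \double \Delta(D_0)([p]) \to \Delta(D)([p])
\]
is a coequalizer diagram of $S_{p+1}$-sets, where $\Delta(D)([p])$ is identified with the set of irreducible components of $\FFF{p}$ and the $S_{p+1}$-action permutes the $p+1$ factors of $\EE \times_X \cdots \times_X \EE$.

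First I would check that the construction $D \mapsto \FFF{p}$ is compatible with étale base change. Because étale morphisms preserve normality, the normalization commutes with étale base change: $\EE \times_X X_0 \cong \widetilde{D_0}$ and similarly $\EE \times_X X_1 \cong \widetilde{D_1}$. Taking $(p+1)$-fold fiber products over $X_0$ (respectively $X_1$) and using the natural isomorphism $X_0 \times_X \cdots \times_X X_0 \cong X_0 \times_{X_0} (X_0 \times_X \cdots \times_X X_0)$ yields $\EEE{p}_0 \cong \EEE{p} \times_X X_0$ and likewise over $X_1$. The two defining conditions for the open subscheme $\FFF{p} \subset \EEE{p}$ — that the $p+1$ chosen points of $\EE$ be distinct and that their common image in $D$ lie in a stratum of codimension $p$ — are étale-local on $X$, since the stratification by codimension can be detected étale-locally. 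Thus $\FFFF{0}{p} \cong \FFF{p} \times_X X_0$ and $\FFFF{1}{p} \cong \FFF{p} \times_X X_1$, $S_{p+1}$-equivariantly.

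Next I would verify that $\FFF{p}$ is a smooth DM stack. This is étale-local on $X$, so I can pass to an atlas where $D$ acquires simple normal crossings; there $\EE$ is the disjoint union of the smooth components of $D$, $\EEE{p}$ is the disjoint union of (ordered) $(p+1)$-fold intersections of these smooth components, and $\FFF{p}$ is the open subset consisting of such intersections with pairwise distinct components meeting in a codimension-$p$ stratum, which is smooth of the expected dimension. Consequently, on $\FFF{p}$ irreducible components coincide with connected components.

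With these two facts in place, the lemma reduces to the standard descent statement that connected components of a smooth DM stack satisfy étale descent: for a surjective étale morphism $Y_0 \to Y$ of smooth DM stacks, the diagram $\pi_0(Y_0 \times_Y Y_0) \double \pi_0(Y_0) \to \pi_0(Y)$ is a coequalizer of sets. Applying this to $Y = \FFF{p}$ with the étale cover $\FFFF{0}{p} \to \FFF{p}$ arising from $X_0 \to X$ gives the desired level-wise coequalizer, and equivariance is automatic since the $S_{p+1}$-action is defined in a way that commutes with base change. The main subtlety to control will be the interplay between the open condition defining $\FFF{p}$ and the formation of fiber products in the stack setting, but since the codimension stratification and the diagonal of $\EE \to D$ are both étale-local invariants, no serious obstacle appears beyond bookkeeping.
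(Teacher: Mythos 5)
Your argument is correct and follows essentially the same route as the paper's: both reduce to showing, level by level, that $\widetilde{D}_q^{[p]}$ is obtained from $\widetilde{D}^{[p]}$ by base change along $X_q \to X$ (so $\widetilde{D}_1^{[p]} \cong \widetilde{D}_0^{[p]} \times_{\widetilde{D}^{[p]}} \widetilde{D}_0^{[p]}$) and then pass to irreducible components. The only real difference is that you cite \'etale descent for $\pi_0$ of smooth DM stacks as a black box, whereas the paper verifies exactly that fact in situ, by showing $\Delta(D_1)([p])$ surjects onto the fiber product $\Delta(D_0)([p]) \times_{\Delta(D)([p])} \Delta(D_0)([p])$ and that $\Delta(D_0)([p]) \to \Delta(D)([p])$ is onto because $X_0 \to X$ is a surjective \'etale map.
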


\begin{proof}
Let us write $\FFFF{0}{p} \subset \EEEE{0}{p}$ and $\FFFF{1}{p} \subset \EEEE{1}{p}$ for the corresponding construction of $\EEE{p}$ applied to $D_0 \subset X_0$ and $D_1 \subset X_1$. Since normalization is \'etale local, we then have three pullback squares
  \begin{equation*}
    \xymatrix{
      \EEEE{1}{p} \ar@/^/[r]\ar@/_/[r] \ar[d] &
      \EEEE{0}{p}\ar[d] \ar[r] &
      \EEE{p} \ar[d] \\
      X_1 \ar@/^/[r]\ar@/_/[r] &
      X_0 \ar[r] &
      X.
    }
  \end{equation*}
Since $\FFF{p} \subset \EEE{p}$ is defined by a property verified in fibers over $X$, there is a similar diagram
  \begin{equation*}
    \xymatrix{
      \FFFF{1}{p} \ar@/^/[r]\ar@/_/[r] \ar[d] &
      \FFFF{0}{p}\ar[d] \ar[r] &
      \FFF{p} \ar[d] \\
      X_1 \ar@/^/[r]\ar@/_/[r] &
      X_0 \ar[r] &
      X
    }
  \end{equation*}
in which all three squares are again pullback, all entries are smooth varieties, and the vertical maps are immersions of codimension $p+1$.  Since $X_1 = X_0 \times_X X_0$ is also a fiber product, the diagram may be reinterpreted as a 3-dimensional cubical diagram (the bottom square is the fiber product defining $X_1$), and where all sides except the top are defined to be pullback.  It follows that the top is also pullback, i.e.,\ $\FFFF{1}{p} = \FFFF{0}{p} \times_{\FFF{p}} \FFFF{0}{p}$.  This pullback diagram may be restricted to the generic points of the components of $\FFF{p}$.  Since the set of generic points of components of $\FFF{p}$ is precisely $\Delta(D)([p])$ and similarly for $\FFFF{0}{p}$ and $\FFFF{1}{p}$, we see that $\Delta(D_1)([p])$ surjects onto the fiber product of sets $(\Delta(D_0)([p])) \times_{\Delta(D)([p])} (\Delta(D_0)([p]))$. It follows that the coequalizer of $\Delta(D_1)([p]) \double \Delta(D_0)([p])$ injects into $\Delta(D)([p])$.  It is also easy to see that $\Delta(D_0)([p]) \to \Delta(D)([p])$ is surjective, using that $X_0 \to X$ is a surjective \'etale map.  This finishes the proof.
\end{proof}

As an immediate consequence, we see that the equivalence of categories between generalized $\Delta$-complexes and smooth generalized cone complexes takes $\Delta(D)$ to the skeleton $\Sigma(X)$ associated to the toroidal structure induced by the normal crossings divisor $D$, as defined in \cite{acp}.

\begin{corollary}
Let $X$ be a smooth variety or DM stack with the toroidal structure induced by a normal crossings divisor $D \subset X$.  Then the dual complex $\Delta(D)$ is the generalized $\Delta$-complex associated to the smooth generalized cone complex $\Sigma(X)$.
\end{corollary}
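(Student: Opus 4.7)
The plan is to prove the corollary by reducing to the simple normal crossings case, where both $\Delta(D)$ and $\Sigma(X)$ admit a direct, matching combinatorial description, and then gluing via \'etale descent.

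First I would observe that both constructions $D \mapsto \Delta(D)$ and $X \mapsto \Sigma(X)$ are \'etale local: Lemma~\ref{lemma:etale-descent} shows that for any \'etale cover $X_0 \to X$ with $X_1 = X_0 \times_X X_0$ and $D_i = D \times_X X_i$, the diagram
\[
  \Delta(D_1) \double \Delta(D_0) \to \Delta(D)
\]
is a coequalizer of generalized $\Delta$-complexes, and the analogous coequalizer statement for $\Sigma(X)$ in terms of $\Sigma(X_0)$ and $\Sigma(X_1)$ is established in \cite{acp}. Because the correspondence between generalized $\Delta$-complexes and smooth generalized cone complexes (with face morphisms) developed in \S\ref{sec:cones} is an equivalence of categories, it preserves coequalizers. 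Hence it suffices to produce a natural identification in the simple normal crossings case and verify that it is compatible with the two pullback maps from a double cover.

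Next I would handle the simple normal crossings case directly. If $D_0 \subset X_0$ has simple normal crossings with smooth components $D_{0,0}, \dots, D_{0,r}$, then by the explicit description in \S\ref{sec:dual-compl-simple} and \S\ref{sec:dual-compl-norm}, $\Delta(D_0)([p])$ is the set of pairs $(Y,\phi)$ with $Y$ a codimension-$(p+1)$ stratum in $X_0$ and $\phi$ an ordering of the $p+1$ components of $D_0$ containing $Y$. On the cone side, $\Sigma(X_0)$ as constructed in \cite{acp} has, for each such stratum $Y$ with a chosen ordering of the containing components, a copy of the standard smooth cone $C([p]) = \R_{\geq 0}^{p+1}$, glued by face morphisms corresponding to containments of strata. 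Under the equivalence of \S\ref{sec:cones}, which takes a smooth cone complex to the functor sending $[p]$ to face morphisms $C([p]) \to \Sigma$, these two sets coincide naturally in $[p]$.

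Finally, I would check naturality: given the \'etale cover $X_0 \to X$, the two projections $X_1 \double X_0$ induce the same two parallel maps on the combinatorial and cone descriptions, since both are defined by restriction to fibers and the projections act on ordered tuples of components of $D_0$ in the same way under either bookkeeping. Applying the equivalence of categories to the coequalizer description of $\Sigma(X)$ from \cite{acp} and comparing with Lemma~\ref{lemma:etale-descent} therefore yields the desired identification of $\Delta(D)$ with the generalized $\Delta$-complex associated to $\Sigma(X)$. The main subtlety I anticipate is bookkeeping: making sure the action of the symmetric group reorderings on the $p+1$ components match on the two sides, in particular when passing to the cover $X_1$ where monodromy may permute local analytic branches. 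Once this identification of the $S_{p+1}$-actions is in place in the simple normal crossings case, the two coequalizer diagrams are identified objectwise and the corollary follows.
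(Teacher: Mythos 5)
Your proposal is correct and follows essentially the same route as the paper, which states the corollary as ``an immediate consequence'' of Lemma~\ref{lemma:etale-descent}: both sides are built by \'etale descent as coequalizers, the equivalence of categories from \S\ref{sec:cones} preserves coequalizers, and the two constructions visibly match in the simple normal crossings case. Your write-up simply makes explicit the bookkeeping that the paper leaves implicit.
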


Most important for our purposes is the special case where $X = \ocM_{g,n}$ is the Deligne--Mumford stable curves compactification of $\cM_{g,n}$ and $D = \ocM_{g,n} \smallsetminus \cM_{g,n}$ is the boundary divisor.

\begin{corollary}\label{cor_dual_complex}
The dual complex of the boundary divisor in the moduli space of stable curves with marked points $\Delta(\ocM_{g,n} \smallsetminus \cM_{g,n})$ is $\Dgn$.
\end{corollary}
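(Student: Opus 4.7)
The plan is to reduce to the preceding corollary and then invoke the main result of \cite{acp}. Applying the preceding corollary with $X = \ocM_{g,n}$ (a smooth DM stack) and $D = \ocM_{g,n} \smallsetminus \cM_{g,n}$ (a normal crossings divisor on $X$), the dual complex $\Delta(D)$ is, under the equivalence of categories of \S\ref{sec:cones}, the generalized $\Delta$-complex associated to the smooth generalized cone complex $\Sigma(\ocM_{g,n})$ arising from the toroidal structure given by $D$. It therefore suffices to identify this generalized $\Delta$-complex with $\Dgn$.

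The main theorem of \cite{acp} identifies $\Sigma(\ocM_{g,n})$, as a generalized cone complex, with the tropical moduli space $\Mgn$ of Definition~\ref{definition:mgn}: each cone $\sigma(\Gmw) = \R_{\geq 0}^{E(\Gmw)}$, for $\Gmw \in J_{g,n}$, corresponds to a toroidal chart at a general point of the boundary stratum parametrizing stable curves whose dual graph is $\Gmw$, and the face maps $\sigma(\Gmw') \to \sigma(\Gmw)$ induced by morphisms $\Gmw \to \Gmw'$ in $J_{g,n}$ (edge contractions and isomorphisms) match the toroidal specialization maps between strata. Under the equivalence of \S\ref{sec:cones}, the generalized $\Delta$-complex corresponding to a smooth generalized cone complex is realized geometrically as the link of the cone point, and by Definition~\ref{def:dgn} the link of $\bullet_{g,n}$ in $\Mgn$ is precisely $\Dgn$.

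The one point to verify---more bookkeeping than genuine obstacle---is that the presheaf on $I$ produced by the above chain of identifications agrees term by term with the explicit description of $\Dgn$ as a generalized $\Delta$-complex given in Example~\ref{ex:Dgn}, in which $X_{g,n}([p])$ is the set of edge-labelings $\tau\colon E(\Gmw) \to [p]$ of objects $\Gmw \in J_{g,n}$ with exactly $p+1$ edges, modulo the action of $\Aut(\Gmw)$. Both constructions start from the same category $J_{g,n}$, assign to each combinatorial type $\Gmw$ the orthant $\R_{\geq 0}^{E(\Gmw)}$, and implement the face maps via edge contractions and graph automorphisms, so the matching is immediate once one confirms that the toroidal stratification of $\ocM_{g,n}$ is indexed by $J_{g,n}$ with codimension of the $\Gmw$-stratum equal to $|E(\Gmw)|$---a standard fact about the stable-curves compactification.
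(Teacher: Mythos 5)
Your proof takes exactly the route the paper intends: the corollary is stated as the special case $X = \ocM_{g,n}$, $D = \ocM_{g,n}\smallsetminus\cM_{g,n}$ of the preceding corollary (that $\Delta(D)$ is the generalized $\Delta$-complex corresponding to the skeleton $\Sigma(X)$), combined with the main result of \cite{acp} identifying $\Sigma(\ocM_{g,n})$ with $\Mgn$ and the observation in \S\ref{sec:cones} that the geometric realization of the associated generalized $\Delta$-complex is the link of the cone point, which by Definition~\ref{def:dgn} is $\Dgn$. Your extra bookkeeping paragraph matching the presheaf to Example~\ref{ex:Dgn} is correct but not strictly needed, since the identification of $\Mgn$ with $\Sigma(\ocM_{g,n})$ in \cite{acp} is already an identification of generalized cone complexes and hence passes directly to the associated generalized $\Delta$-complexes.
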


We note that the generalized $\Delta$-complex $\Delta(D)$ associated to a normal crossings divisor $D \subset X$ may also be interpreted in the following more transcendental and geometric way.  For a point $x \in D$ the \emph{local branches} of $D$ at $x$ are the germs of locally closed smooth analytic codimension 1 submanifolds $V \subset X$ containing $x$ and contained in $D$.  The codimension $p$ stratum of $D$ is a smooth locally closed subvariety $D^{(p)} \subset X$ which is locally cut out by $p+1$ equations, and admits a $(p+1)!$ sheeted cover by $\FFF{p}$.  (Note that the points of $\FFF{p}$ may be identified with pairs of a point $x \in D^{(p)}$ together with a total order of the set of local branches at $x$.)  

Identifying irreducible components with path components in the analytic topology, we conclude that $\Delta(D)([p])$ is in bijection with the set of equivalence classes $[(x,\sigma)]$ of pairs where $x \in D$ is a point in the codimension $p$ stratum and $\sigma$ is a total ordering of the set of local branches at $x$.  Two such pairs $(x,\sigma)$ and $(x',\sigma')$ are identified if there exists a continuous path $\gamma: [0,1] \to D^p$ from $x$ to $x'$, together with choices of total orderings of the local branches of $D$ at $\gamma(t)$ for all $t$, starting at $\sigma$ and ending at $\sigma'$ and depending continuously on $t$.
 
For $(x,\sigma) \in \FFF{p}$ the covering space $\FFF{p} \to D^{(p)}$ gives rise to a ``monodromy'' homomorphism $\pi_1^{et}(D^{(p)},x) \to S_{p+1}$ whose image may be identified with the stabilizer of $[(x,\sigma)] \in \Delta(D)([p])$.  Hence $\Delta(D)$ is a (non-generalized) unordered $\Delta$-complex if and only if all monodromy homomorphisms are trivial, i.e.,\ $\FFF{p} \to D^{(p)}$ is a trivial covering space for all $p$.

\begin{example}\label{ex:whitney} Consider the {\em Whitney umbrella} $D = \{x^2y=z^2\}$ in $X=\mathbb{A}^3\setminus \{y=0\},$ as in \cite[Example 6.1.7]{acp}. Then the dual complex $\Delta(D)$ is the half segment of Example~\ref{ex:halfinterval}. We will explain this calculation three times in order to demonstrate the equivalent constructions of the boundary complex.  

Let $X_0\cong \mathbb{A}^2\times \mathbb{G}_m \rightarrow X$ be the degree 2 \'etale cover given by a base change $y = u^2$.  Then $D_0 = \{x^2u^2 - z^2 = 0\}$ is simple normal crossings, and $D_1 = D_0\times_X D_0 \cong D_0\times \Z/2\Z$, since $D_0$ is degree 2 over $D$.  Explicitly, one component of $D_1$ parametrizes pairs $(p,p)$ of points in $D_0$, and the other parametrizes pairs $(p,q)$ with $p\ne q$ lying over the same point of $D$. So $\Delta(D_0)$ is a segment and $\Delta(D_1)$ is two segments, and the two maps $\Delta(D_1)\double \Delta(D_0)$ differ by one flip, making the coequalizer a half segment.

Second, we have the normalization map $E=\mathbb{A}^2_{x,u}-\{u=0\}$ sending $(x,u)$ to $(x,u^2, xu)$.  Then $F([0])=E$, while $F([1])$ is isomorphic to the 1-dimensional stratum $Y=\{x=z=0\}\cong \mathbb{G}_m$ in $X$; it has closed points $((0,u),(0,-u)) \in E\times_X E.$  So $F([0])$ and $F([1])$ each have a single  irreducible component, which completely determines $\Delta(D)$.   

Rephrased complex-analytically: the points $(0,u)$ and $(0,-u)$ in $E$ correspond to the two analytic branches along $Y$ at the point $(0,y,0)$, where $y=u^2$.  The equations of the branches are $z=xu$ and $z=-xu$.  So taking $y$ around a loop around the punctured complex plane precisely interchanges the branches.  Therefore there is only one equivalence class of pairs $(x,\sigma)$ and hence, again, only one element in $\Delta(D)([1])$.  We conclude again that $\Delta(D)$ is a half segment.
\end{example}

\subsection{Homotopy invariance}  \label{ss:homotopyinvariance}

We now focus our attention on the important special case where the normal crossings divisor is the boundary of the compactification of a smooth variety or stack $X$.  In this case, the homotopy type of the geometric realization of the boundary complex is an invariant of $X$ itself, as we now discuss.

Let $X$ be a smooth variety and $\overline X$ a compactification whose boundary $\partial \overline X = \overline X \smallsetminus X $ is a divisor with simple normal crossings.  The {\em boundary complex} of $X$  is the dual complex $\Delta(\partial \overline X)$ of the boundary divisor, and the homotopy type of its geometric realization depends only on $X$ itself and not the choice of compactification.  There are three essentially different proofs of the independence of homotopy type of the boundary complex, one using resolution of singularities and the nerve of the category of irreducible varieties mapping into the boundary \cite{Danilov75}, one using Berkovich spaces and Thuillier's deformation retraction onto skeletons of toroidal embeddings \cite{Thuillier07}, and one using weak factorization of birational maps \cite{Stepanov06, boundarycx}.  This last approach, applying the toroidal weak factorization theorem to the birational map between two simple normal crossing compactifications, gives a slightly stronger result, that the \emph{simple homotopy type} of the boundary complex is independent of the choice of compactification.  

We define the boundary complex of a smooth DM stack with a normal crossings compactification in exactly the same way; if $\cX$ is a smooth stack and $\ocX$ is a smooth and proper stack containing $\cX$ as a dense open substack, in which the complement $\cD = \ocX \smallsetminus \cX$ is a divisor with normal crossings, then the boundary complex of $\cX$ is $\Delta(\cD)$, the dual complex of the boundary divisor.  In this case also, just as for varieties, the homotopy type of the geometric realization of the boundary complex is independent of the choice of compactification.   Note, however, that we have no suitable analogue of weak factorization for birational maps of DM stacks.  It is an open problem whether the simple homotopy type of the boundary complex of a smooth DM stack is independent of the choice of compactification.  Nevertheless, the invariance of the ordinary homotopy type of boundary complexes for DM stacks may be deduced from either of the other two methods, with no new difficulties.  In particular, Thuillier's construction is generalized to DM stacks without any essential changes in \cite{acp}.

Because the homotopy type of the geometric realization of the boundary complex does not depend on the choice of compactification, its topological invariants gain interesting interpretations as algebraic invariants of $\cX$.  For instance, the following section explains that the rational reduced homology of $\Delta(\cD)$ is naturally identified with the top weight cohomology of $\cX$.  

Note that the reduced rational homology of the generalized $\Delta$-complex $\Delta(\cD)$ can be computed using cellular chains, by Lemma~\ref{lem:cellularhomology}.  Furthermore, if we choose, for each stratum of codimension $p+1$ on which the monodromy acts by even permutations on the local branches of $\cD$, a point $x$ in the stratum and an ordering of analytic branches $\sigma$ at $x$, then the corresponding classes $[(x,\sigma)]$ form a basis for the group of rational cellular $p$-chains $C_p(\Delta(D); \Q)$, by Lemma~\ref{lem:cellularbasis}.

\subsection{Top weight cohomology}   \label{sec:topweight}

Let $\cX$ be a smooth DM stack of dimension $d$ over $\C$.  The rational singular cohomology of $\cX$, like the rational cohomology of a smooth variety, carries a canonical mixed Hodge structure, in which the weights on $H^k$ are between $k$ and $\min\{2k,2d\}$.  Since the graded pieces $\Gr_j^W H^*(\cX; \Q)$ vanish for $j > 2d$, we refer to $\Gr_{2d}^W H^*(\cX; \Q)$ as the \emph{top weight cohomology} of $\cX$.  The standard identification of the top weight cohomology of a smooth variety with the reduced homology of its boundary complex carries through essentially without change for DM stacks.  It is stated in the next proposition. For further details and references, see the appendix.  

\begin{proposition} \label{prop:topweight}
Let $\cX$ be a smooth and separated DM stack of dimension $d$ with a normal crossing compactification $\overline \cX$ and let $\cD = \overline \cX \smallsetminus \cX$.  Then there is a natural isomorphism
\[
\Gr_{2d}^W H^{2d-i}(\cX; \Q) \  \cong  \ \widetilde H_{i-1}(\Delta (\cD); \Q).
\]
\end{proposition}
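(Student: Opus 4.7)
The plan is to establish the isomorphism by Deligne's weight spectral sequence for the logarithmic de Rham complex, combined with \'etale descent to reduce simultaneously from normal crossings to simple normal crossings and from the stack $\overline{\cX}$ to a smooth variety. Concretely, choose a surjective \'etale map $\overline X_0 \to \overline{\cX}$ with $\overline X_0$ a smooth variety and with $D_0 = \cD \times_{\overline{\cX}} \overline X_0$ a simple normal crossings divisor, and form the \v{C}ech nerve $\overline X_\bullet$ over $\overline{\cX}$, with induced SNC divisors $D_\bullet \subset \overline X_\bullet$ and complements $X_\bullet$. This is a cohomological descent diagram in the sense of Deligne, so the rational cohomology of $\cX$, with its mixed Hodge structure, is the abutment of a strict spectral sequence whose $E_1$-page is assembled from the mixed Hodge structures $H^*(X_p; \Q)$.

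On each smooth SNC pair $(\overline X_p, D_p)$ the weight filtration on $\Omega^\bullet_{\overline X_p}(\log D_p)$ gives a spectral sequence that degenerates at $E_2$, whose $E_1$-entries are cohomology groups of the normalizations of intersections of components of $D_p$. A standard residue/Gysin calculation identifies the weight-zero part of compactly supported cohomology with the reduced cellular cochain complex of the SNC dual complex:
\[
\Gr_0^W H^i_c(X_p; \Q) \cong \widetilde H^{i-1}(\Delta(D_p); \Q),
\]
where the right-hand side is computed by $\widetilde C^{\bullet-1}(\Delta(D_p); \Q)$ from \S\ref{sec:dual-compl-simple} via Lemma~\ref{lem:cellularhomology}. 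Now apply $\Gr_0^W$ to the descent spectral sequence: strictness of the weight filtration produces a double complex whose columns are the cellular cochains of the $\Delta(D_p)$, and whose horizontal maps are those induced by the face maps of $\overline X_\bullet$. By Lemma~\ref{lemma:etale-descent}, the diagram $\Delta(D_1) \rightrightarrows \Delta(D_0) \to \Delta(\cD)$ is a coequalizer of generalized $\Delta$-complexes; combined with the corollary of \S\ref{sec:chains-of-coequalizers} that rational cellular cochains take coequalizers of generalized $\Delta$-complexes to the degreewise kernels of the induced parallel maps, the totalization computes $\widetilde H^{i-1}(\Delta(\cD); \Q)$. Hence $\Gr_0^W H^i_c(\cX; \Q) \cong \widetilde H^{i-1}(\Delta(\cD); \Q)$.

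To finish, apply Poincar\'e duality for the smooth separated DM stack $\cX$ of dimension $d$: it identifies $H^{2d-i}(\cX; \Q)$ with $H^i_c(\cX; \Q)^\vee$ and sends weight $w$ on the left to weight $2d - w$ on the right. Dualizing the identification above yields the stated isomorphism $\Gr_{2d}^W H^{2d-i}(\cX; \Q) \cong \widetilde H_{i-1}(\Delta(\cD); \Q)$, which is moreover naturally associated to $\cX$. The main obstacle is verifying that Deligne's mixed Hodge package---weight filtration on the logarithmic de Rham complex, $E_2$-degeneration, strictness under cohomological descent, and Poincar\'e duality exchanging weights $w \leftrightarrow 2d - w$---carries over from smooth varieties to smooth separated DM stacks via the chosen \'etale atlas. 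None of these steps involves an essentially new idea beyond the classical case, but the careful bookkeeping, together with the matching of the \'etale-descent coequalizer in Hodge theory with the coequalizer definition of $\Delta(\cD)$ from \S\ref{sec:dual-compl-norm}, is precisely what the appendix referenced in the paper must supply.
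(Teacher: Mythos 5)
Your approach has a genuine gap at the step where you apply the boundary-complex formula to each level of the \v{C}ech nerve. An \'etale atlas $\overline{X}_0 \to \overline{\cX}$ by a smooth \emph{scheme} is generally not proper (for $\overline{\cX} = \overline{\mathcal{M}}_{g,n}$, the \'etale-local structure $[\mathbb{A}^N/G]$ at points with nontrivial stabilizer and non-free $G$-action precludes a proper \'etale atlas), and the same is then true of every level $\overline{X}_p$ of the nerve. But the identity $\Gr_0^W H^i_c(X_p;\Q) \cong \widetilde H^{i-1}(\Delta(D_p);\Q)$ is the standard computation for a \emph{proper} SNC compactification, and it fails when $\overline{X}_p$ is not proper: $D_p$ captures only the part of the boundary of $X_p$ lying over $\cD$, the strata of $D_p$ are non-compact so their top-degree cohomology (which is what the residue isomorphism puts in the relevant row of the $E_1$ page) vanishes rather than agreeing with $H_0$, and the weight-zero compactly supported classes of $X_p$ also receive contributions from the boundary components your partial compactification $\overline{X}_p$ omits. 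So the columns of your descent double complex are not the cellular cochain groups $C^{\ast-1}(\Delta(D_p);\Q)$ as claimed.

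Both of the paper's proofs avoid this by exploiting the properness of the total stack $\overline{\cX}$ rather than of the atlas. The first (\S\ref{app:residues}) descends the logarithmic de Rham complex and its weight filtration at the level of \emph{sheaves} on $\overline{\cX}$, so the weight spectral sequence lives on the proper stack itself, and Poincar\'e duality for proper stacks, purity, and $E_2$-degeneration then apply. The second (\S\ref{app:topweight}) blows up $\overline{\cX}$ directly (not an atlas) to make $\cD$ simple normal crossings, and uses the simplicial \emph{stack} of intersections $\cD_{i_0} \times_{\overline{\cX}} \cdots \times_{\overline{\cX}} \cD_{i_{k-1}}$; each of these is a smooth proper DM stack with projective coarse moduli space, hence has pure cohomology. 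A secondary issue in your argument, even granting the per-level identification: passing from the totalization over the full (infinite) \v{C}ech nerve to the two-term coequalizer computing $C^*(\Delta(\cD);\Q)$ is not automatic---the corollary in \S\ref{sec:chains-of-coequalizers} only handles the coequalizer $\Delta(D_1) \rightrightarrows \Delta(D_0)$, whereas the comparison with the full simplicial object is precisely the content of Proposition~\ref{p:simplicial}, which you would need to invoke.
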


\noindent We sketch two proofs of this proposition in \S\ref{app:residues} and \S\ref{app:topweight}.  The proof in \S\ref{app:residues} gives more refined information about the existence of logarithmic forms with prescribed residues, as discussed in \S\ref{sec:log}, and is most naturally expressed in the language of cellular chains and cochains on generalized $\Delta$-complexes as developed in \S\ref{sec:cellular}.

\subsection{Additional invariance properties in rational homology} \label{sec:simplicialinvariance}

We briefly pause to discuss a different construction of a topological space whose rational homology agrees with that of the dual complex $\Delta(\cD)$ for a normal crossings divisor in a smooth DM stack; this construction is natural from the point of view of simplicial schemes, but the homotopy type depends on the choice of an \'etale cover.

Let $\cD \subset \cX$ be a normal crossings divisor in a smooth DM stack, and let $V \to \cX$ be an \'etale cover in which the preimage of $\cD$ has simple normal crossings.  We may then construct a simplicial scheme $V_\bullet$ with $V_0 = V$ and $V_p = V \times_\cX \dots \times_\cX V$ the $(p+1)$-fold iterated fiber product.   Then $D_p = \cD \times_\cX V_p$ has simple normal crossings in $V_p$, and we have a diagram of generalized $\Delta$-complexes
\begin{equation*}
  \xymatrix{
    \dots \ar@<1ex>[r]\ar[r]\ar@<.5ex>[r] & \Delta(D_1) \ar@<.5ex>[l] \ar@<1ex>[l]
 \ar[r] \ar@<.5ex>[r] &
    \Delta(D_0) \ar@<.5ex>[l]\ar[r] & \Delta(\cD),
  }
\end{equation*}
in which we have defined $\Delta(D_0) \to \Delta(D)$ to be the coequalizer of $\Delta(D_1) \double \Delta(D_0)$.  Passing to geometric realizations gives us a simplicial object $[p] \mapsto |\Delta(D_p)|$ in the category of topological spaces.  Taking geometric realization again gives us a (typically infinite dimensional) topological space which we shall temporarily denote $\|\Delta(D_\bullet)\|$.  It comes equipped with a map
\[
\|\Delta(D_\bullet)\| \rightarrow |\Delta(\cD)|.
\]

It is clear that the homeomorphism type of $\|\Delta(D_\bullet)\|$, unlike that of $|\Delta(\cD)|$, will depend on the \'etale cover $V \to X$, and not just the divisor $\cD$.  In fact, even the homotopy type of $\|\Delta(D_\bullet)\|$ will depend on the choice of \'etale cover, as the following example extracted from \cite[Example~6.1.10]{acp} shows.  

\begin{example}
Let $X = \A^1 \times \G_m$ and let $D$ be the divisor $\{0\} \times \G_m$.  Note that $D$ is a simple normal crossings divisor and $|\Delta(D)|$ is a single point.  If we take the identity map as an \'etale cover, then $|\Delta(D_p)|$ will be a point for all $p$, and the geometric realization of the resulting simplicial space will be a point.  

On the other hand, if we take as an \'etale cover the degree 2 map $\A^1 \times \G_m \to X$ given by $(z, t) \mapsto (z, t^2)$ then $|\Delta(D_p)|$ is discrete and naturally identified with $\mu_2^p$, where $\mu_2 = \{\pm 1\} \subset \G_m$.  In fact it can be checked that the simplicial object $[p] \mapsto |\Delta(D_p)|$ is isomorphic to the nerve of the group $\mu_2$ and hence $\|\Delta(D_\bullet)\|$ is the classifying space $B\mu_2 \simeq \R P^\infty$.
\end{example}

Nevertheless, the rational homology of $\|\Delta(D_\bullet)\|$ is independent of  the choice of \'etale cover, and agrees with that of $|\Delta(\cD)|$.
 
\begin{proposition}\label{p:simplicial}
 The natural map $\|\Delta(D_\bullet)\| \rightarrow |\Delta(\cD)|$ induces an isomorphism in rational homology.
\end{proposition}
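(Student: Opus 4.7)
My plan is to apply the spectral sequence of a simplicial space to $Y_\bullet = |\Delta(D_\bullet)|$.  There is a strongly convergent first-quadrant spectral sequence
\[
E^1_{p,q} = H_q(Y_p;\Q) \ \Longrightarrow\ H_{p+q}(\|Y_\bullet\|;\Q),
\]
whose $d^1$ is the alternating sum of the face maps.  By Lemma~\ref{lem:cellularhomology}, each $H_q(|\Delta(D_p)|;\Q)$ is the $q$-th homology of the cellular chain complex $C_*(\Delta(D_p);\Q)$, so naturality in $p$ identifies $H_*(\|\Delta(D_\bullet)\|;\Q)$ with the total homology of the double complex $\{C_q(\Delta(D_p);\Q)\}_{p,q\ge 0}$, whose vertical differential is the cellular boundary of Section~\ref{sec:generalizedcellular} and whose horizontal differential is the alternating sum of the maps induced by the simplicial face maps of $\Delta(D_\bullet)$.

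I would then pass to the other spectral sequence of this double complex, taking horizontal homology first, and aim to prove:
\begin{enumerate}
\item[(i)] for each $q$, the horizontal homology in column $p=0$ is canonically isomorphic to $C_q(\Delta(\cD);\Q)$; and
\item[(ii)] for each $q$ and each $p>0$, the horizontal homology vanishes.
\end{enumerate}
Given (i) and (ii), this second spectral sequence collapses to the single column $C_*(\Delta(\cD);\Q)$, whose rational homology is $H_*(|\Delta(\cD)|;\Q)$ by Lemma~\ref{lem:cellularhomology}, yielding the proposition.  Claim~(i) is immediate from Lemma~\ref{lemma:etale-descent} combined with the compatibility of cellular chains with coequalizers proved in Section~\ref{sec:chains-of-coequalizers}: together they identify $C_q(\Delta(\cD);\Q)$ with the cokernel of $C_q(\Delta(D_1);\Q)\rightrightarrows C_q(\Delta(D_0);\Q)$.

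Claim~(ii) is the main obstacle.  Iterating the identification $\widetilde D_p = \widetilde{\cD}\times_\cX V_p$ from the proof of Lemma~\ref{lemma:etale-descent} gives $\widetilde D_p^{[q]} = \widetilde{\cD}^{[q]} \times_\cX V_p$ by base change, and restricting to the open subset of tuples with distinct entries in a codimension-$q$ stratum yields $\Delta(D_p)([q]) = \pi_0(F\times_\cX V_p)$, where $F\subset\widetilde{\cD}^{[q]}$ is this open subset.  Decomposing into connected components of $F$ and using that the $S_{q+1}$-action is functorial in the \'etale cover, claim~(ii) reduces to the assertion that, for any connected smooth scheme $W$ mapping to $\cX$, the augmented chain complex
\[
\cdots \to \Q\bigl[\pi_0(W\times_\cX V_1)\bigr] \to \Q\bigl[\pi_0(W\times_\cX V_0)\bigr] \to \Q \to 0
\]
with alternating sum differentials is exact.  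To prove this, fix a basepoint of $W$ and let $\Phi$ denote the fiber of $W\times_\cX V_0 \to W$ over that basepoint, a nonempty finite set on which $\pi_1(W)$ acts through a finite quotient $G$.  Since $W\times_\cX V_\bullet$ is the \v{C}ech nerve of the \'etale cover $W\times_\cX V_0 \to W$, we have $\pi_0(W\times_\cX V_p) = \Phi^{p+1}/G$, so the complex in question is the $G$-coinvariants of the augmented simplicial abelian group $\Q[\Phi^{\bullet+1}] \to \Q$.  The latter is acyclic, being the $\Q$-linearization of the \v{C}ech nerve of the set-theoretic surjection $\Phi \to \{*\}$, which carries an evident contracting homotopy.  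Since $G$ is finite and we are working rationally, $G$-coinvariants is an exact functor on $\Q[G]$-modules by Maschke's theorem, so acyclicity is preserved, completing the argument.
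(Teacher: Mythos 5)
Your proof is correct, and the overall reduction coincides with the paper's: both come down to showing that, for each $q$, the augmented complex $\cdots \to C_q(\Delta(D_1);\Q) \to C_q(\Delta(D_0);\Q) \to C_q(\Delta(\cD);\Q) \to 0$ is acyclic (the paper asserts this sufficiency directly; you justify it with the two spectral sequences of the double complex, which is what the paper leaves implicit). Where you genuinely diverge is in proving acyclicity. The paper builds an explicit chain contraction: a transfer map $\tau\colon C_q(\Delta(D_{p-1});\Q)\to C_q(\Delta(D_{p});\Q)$ sending a component $[C]$ to a weighted average $\sum_j w_j[C'_j]$ of the components over it, with rational weights $w_j=m_j/\sum_k m_k$ given by residue-field degrees, and checks $d_0\tau=\mathrm{id}$, $d_i\tau=\tau d_{i-1}$ for $i>0$. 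You instead base-change the iterated \'etale cover along each connected component $W$ of $\FFF{q}$, recognize $\pi_0(W\times_\cX V_\bullet)$ as the orbit space $\Phi^{\bullet+1}/G$ of the \v{C}ech nerve of a finite fiber $\Phi$ under a finite monodromy group $G$, and apply Maschke to see that taking $G$-coinvariants preserves the contractibility of the linearized nerve. The two routes are closely linked---the paper's degree-weighted transfer is precisely the averaging operator underlying the exactness of rational coinvariants---but yours isolates the characteristic-zero input conceptually while the paper's is more concrete and avoids passing through the fundamental group. Two small points worth tightening: a connected component $W$ of $\FFF{q}$ is in general a connected smooth DM stack rather than a scheme (the finite-monodromy argument is unaffected); and the step identifying $H_*(\|\Delta(D_\bullet)\|;\Q)$ with the total homology of the double complex via ``naturality in $p$'' really requires a natural quasi-isomorphism of complexes rather than merely of homology groups---this can be extracted from the skeletal filtration in the proof of Lemma~\ref{lem:cellularhomology}, but deserves a sentence, since Lemma~\ref{lem:cellularhomology} as stated gives only isomorphisms on homology.
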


\noindent See \S\ref{sec:simplicialproof} for the proof of Proposition~\ref{p:simplicial}.

\section{Residue integrals and torus classes} \label{sec:log}

In \S\ref{sec:topweight}, we discussed the natural isomorphism between the top weight cohomology of an open variety or stack and the reduced homology of its boundary complex.  Using the identification of $\Dgn$ with the boundary complex of $\cM_{g,n}$, this is already enough to prove cohomological results such as Theorem~\ref{thm:cohomology}, but falls short of what is needed to prove more refined statements, such as Theorem~\ref{thm:dualbasis}.  (Both of these theorems are proved in \S\ref{sec:proofs}.)

In this section, we discuss the background needed to relate closed logarithmic forms and explicit geometric cycles that we call \emph{torus classes} in an open DM stack with a normal crossings compactification to cellular chains and cochains in its boundary complex, as required for Theorems~\ref{thm:dualbasis} and \ref{thm:abelianrelations}.  

Our approach follows Deligne's description of the mixed Hodge structure of a smooth variety with a simple normal crossings compactification in terms of logarithmic forms and residues \cite[\S3]{Deligne71} which works without any essential changes for normal crossings compactifications of smooth DM stacks.  The interpretation of residue integrals in terms of cellular chains on the boundary complex is essentially standard in the case of simple normal crossings (e.g., see \cite{Hacking08}), and our definition of cellular chains in the normal crossings case, given in \S\ref{sec:generalizedcellular}, is specifically chosen to be compatible with the theory of residues; no essentially new ideas are required.  Nevertheless, lacking a suitable reference in the generality that we require for applications to moduli spaces of curves (normal crossings compactifications of smooth DM stacks), we provide further details and proofs in the appendix.

\bigskip

\subsection{Residue integrals}  \label{sec:residues}

Let $\cX$ be a smooth DM stack with compactification $\ocX$ whose boundary $\cD = \ocX \setminus \cX$ is a divisor with normal crossings, but not necessarily simple normal crossings.  We refer to the appendix for a brief discussion of the logarithmic de Rham complex $\cA^\bullet_{\ocX}(\log \cD)$, and its weight filtration. As explained there, we have a \emph{residue integral} homomorphism
\begin{equation}
  \label{eq:8}
  \int_{\widetilde \cD^{(k)}} \mathrm{res}_{\widetilde \cD^{(k)}}: \Gamma(\ocX, W_k \cA^{2d -
    k}_{\ocX}(\log \cD)) \to C_{k-1}(\Delta(\cD);\C),
\end{equation}
obtained, roughly speaking, by composing the \emph{Poincar\'e residue} with integration of differential forms along $\widetilde \cD^{(k)}$, the normalization of the closure of the codimension $k$ stratum $\cD^{(k)}$.

By Lemma~\ref{lem:cellularbasis}, the group of cellular chains $C_{k-1}(\Delta(\cD); \C)$ is isomorphic to $\C^\alpha$, where $\alpha$ is the number of components $\cY$ in $\widetilde \cD^{(k)}$ such that the monodromy representation $\pi_1(\cY) \rightarrow S_k$, given by permutation of the local branches of $\cD$, has image contained in the alternating subgroup. More precisely, if we choose a point $y$ in each such $\cY$ and an ordering $\sigma$ of the local branches of $\cD$ at $y$, then a set of representatives for the $S_k$-orbits of these $[(y,\sigma)] \in \Delta(\cD)([k])$ forms a basis for the cellular chain group $C_{k-1}(\Delta(\cD); \C)$.

The residue integral homomorphism may be described with respect to this basis, as follows.  The Poincar\'e residue $\res_\cY$ of a form $\omega \in \Gamma(\ocX, W_k \cA^{2d - k}_{\ocX}(\log \cD))$ is a smooth differential $(2d-2k)$-form on $\cY$ with coefficients in the rank 1 local system $L_\cY$ given by the sign of the monodromy representation. When this monodromy is alternating, the choice of an ordering of the local branches of $\cD$ at a point induces a trivialization of $L_\cY$, allowing us to integrate the Poincar\'e residue along $\cY$ and get a complex number.  Changing the ordering of the local branches multiplies this number by the sign of the permutation, and it follows that the cellular chain $(\int_\cY \res_\cY \omega) \cdot [(y,\sigma)] \in C_{k-1}(\Delta(\cD), \C)$ depends only on $\omega$ and $\cY$, and not the choice of $[(y,\sigma)]$.  The global residue integral $\int_{\widetilde \cD^{(k)}} \res_{\widetilde \cD^{(k)}}$ is then defined by taking the sum over all irreducible components for which the monodromy is alternating.

\begin{example}\label{ex:whitneyintegration} 
Returning to Example~\ref{ex:whitney} of the {Whitney umbrella} $D = \{x^2y-z^2 = 0\}$ in $\overline X=\mathbb{A}^3\setminus \{y=0\},$ let $Y = \{x=z=0\}$ and let $y\in Y$.  Let $V\rightarrow \overline X$ be the degree 2 \'etale cover given by a base change $y = u^2$.  Then $\sigma$ is a segment and monodromy acts on $\sigma$ by a flip.  By Lemma~\ref{lem:cellularbasis} it follows that $[\sigma] = 0$ in $C_1(\Delta(\cD),\C)$, and so the integral of any twisted differential form on $Y$ vanishes.
\end{example}

The main properties of the residue integral homomorphism are summarized in the following proposition, which we prove in the appendix.

\begin{proposition} \label{prop:cyclesandboundaries}  
Let $\cX \subset \ocX$ be as above, i.e., $\ocX$ is a proper smooth DM stack of dimension $2d$, and $\cX = \ocX \setminus \cD$ is the complement of the normal crossings divisor.

\begin{enumerate}[(i)]
  
\item For every $F \in \widetilde C_{k-1}(\Delta(\cD);\C)$ there exists a form $\omega \in \Gamma(\ocX, W_k \cA^{2d - k}(\log \cD))$ such that $d\omega$ has weight $k-1$ and $\int_{\cD^{(k)}} \mathrm{res}(\omega) = F$.

\item If $F \in \widetilde C_{k-1}(\Delta(\cD);\C)$ is a cycle, then there exists an $\omega \in \Gamma(\ocX, W_k \cA^{2d - k}(\log \cD))$ such that $\int_{\cD^{(k)}} \mathrm{res}(\omega) = F$ and $d\omega = 0$.

\item  $F \in \widetilde C_{k-1}(\Delta(\cD);\C)$ is a boundary, then there exists an $\omega \in \Gamma(\ocX, W_k \cA^{2d - k}(\log \cD))$ such that $\int_{\cD^{(k)}} \mathrm{res}(\omega) = F$ and $\omega = d\tau$ for some $\tau \in \Gamma(\ocX, W_{k+1} \cA^{2d - k-1}(\log \cD))$.
\end{enumerate}
\end{proposition}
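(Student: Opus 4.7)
The approach uses the weight-filtered logarithmic de Rham complex $W_\bullet \cA^\bullet_{\ocX}(\log \cD)$, and in particular the residue isomorphism of sheaves
\[
\res_k \colon \Gr^W_k \cA^p_{\ocX}(\log \cD) \xrightarrow{\sim} \cA^{p-k}_{\widetilde \cD^{(k)}} \otimes \epsilon,
\]
where $\epsilon$ is the sign local system recording monodromy on the local branches of $\cD$. This is an \'etale-local statement that reduces to the SNC case and extends without modification to smooth DM stacks. Because $\cA^\bullet$ is a fine sheaf, the induced map on global sections $\Gamma(\ocX, W_k \cA^p) \twoheadrightarrow \Gamma(\widetilde \cD^{(k)}, \cA^{p-k} \otimes \epsilon)$ is surjective. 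Each component of $\widetilde \cD^{(k)}$ has complex dimension $d-k$ and is proper, so real $(2d-2k)$-forms there are top degree and automatically closed, and $H^{2d-2k}(\widetilde \cD^{(k)}; \epsilon)$ is identified by integration with $C_{k-1}(\Delta(\cD); \C)$ via Lemma~\ref{lem:cellularbasis}.

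To prove (i), given $F = \sum_\cY a_\cY[(\cY,\sigma_\cY)] \in \widetilde C_{k-1}(\Delta(\cD);\C)$, where the sum runs over alternating-monodromy components, I would pick bump top-degree forms $\eta_\cY$ on each $\cY$ with $\int_\cY \eta_\cY = a_\cY$ and lift their sum to $\omega \in \Gamma(\ocX, W_k \cA^{2d-k}(\log \cD))$ via the surjection above. Then $\int_{\widetilde \cD^{(k)}} \res_k \omega = F$ by construction, and $d\omega$ lies in $W_{k-1}$ because the image of $d\omega$ in $\Gr^W_k \cA^{2d-k+1}$ corresponds under $\res_k$ to $d\eta$, which vanishes in top degree.

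For (ii), I would start with $\omega_0$ from (i) and iteratively modify it by lower-weight forms to kill $d\omega_0$. At step $1$ the obstruction is $[\res_{k-1}(d\omega_0)] \in H^{2d-2k+2}(\widetilde \cD^{(k-1)}; \epsilon)$, a top-degree cohomology class equal to $\pm\,\partial F$ via the standard identification of the $d_1$-differential of the weight spectral sequence with the cellular boundary map (a chain-level residue-Stokes compatibility which descends from SNC to NC by \'etale descent, using Lemma~\ref{lemma:etale-descent}). Since $F$ is a cycle this class vanishes, so $\res_{k-1}(d\omega_0) = d\beta'$ for some $\beta'$; lifting $\beta'$ to $\beta_1 \in \Gamma(\ocX, W_{k-1}\cA^{2d-k})$ yields $d(\omega_0 - \beta_1) \in W_{k-2}$, still closed, still with level-$k$ residue integral $F$. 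At each subsequent step $j \geq 2$, the analogous obstruction represents $d_j[F]$ in the weight spectral sequence, and vanishes by $E_2$-degeneration (Deligne's theorem for smooth varieties, extended to smooth DM stacks with NC boundary by descent along an \'etale atlas refining $\cD$ to SNC). The standard lifting argument for convergent filtered complexes then produces, in finitely many steps, a closed $\omega \in W_k \cA^{2d-k}(\log \cD)$ with $\int \res_k \omega = F$.

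Statement (iii) is the easiest: given $F = \partial G$, apply (i) to obtain $\tau \in \Gamma(\ocX, W_{k+1}\cA^{2d-k-1}(\log \cD))$ with $\int_{\widetilde \cD^{(k+1)}} \res_{k+1}\tau = G$. Set $\omega := d\tau$; this lies in $W_k \cA^{2d-k}(\log \cD)$ since $\res_{k+1}(d\tau) = d(\res_{k+1}\tau) = 0$ by the top-degree argument, and the residue-Stokes compatibility used above gives $\int_{\widetilde \cD^{(k)}} \res_k \omega = \partial G = F$ (any global sign absorbed into $\tau$). The principal obstacle throughout is the $E_2$-degeneration used in (ii): while classical in the SNC variety setting, its DM-stack version with a merely NC boundary must be obtained either by direct Hodge-theoretic means or by \'etale descent, invoking compatibility of the weight filtration with pullback along an SNC atlas in the spirit of Lemma~\ref{lemma:etale-descent}.
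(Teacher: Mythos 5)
Your proposal is correct and follows essentially the same route as the paper: identify $E_1^{-k,2d}$ of the weight spectral sequence with cellular chains $\widetilde C_{k-1}(\Delta(\cD);\C)$ via residue integration, lift representatives through the surjection $W_k\Gamma \twoheadrightarrow E_0^{-k,2d}$ (the fineness of $\cA^\bullet$ makes this surjective, as you note) to prove (i), use the identification of $d_1$ with the cellular boundary for (iii), and use $E_2$-degeneration of the weight spectral sequence to kill the higher obstructions in (ii). The paper's proof of (ii) is phrased in the language of classes surviving to $E_r$ rather than your explicit weight-lowering iteration, but the two are the same argument; and the paper deduces $E_2$-degeneration directly from purity of the $E_1$ rows for the proper DM stack $\ocX$ rather than by \'etale descent from the SNC case, though your descent route is a legitimate alternative. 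One small simplification you could make: in (i) and (iii), the inclusion $d\omega \in W_{k-1}$ (respectively $d\tau \in W_k$) follows already from the vanishing of the sheaf $\Gr^W_k\cA^{2d-k+1}_{\ocX}(\log\cD)$ for dimension reasons, without needing the ``$d$ of a top-degree form vanishes'' observation.
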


\subsection{Torus classes} \label{sec:torus}

The residue integral of a logarithmic $(2d-k)$-form of weight $k$, as discussed in the previous section, is a cellular $(k-1)$-chain on $\Delta(\cD)$.  We now focus on the special case where $k = d$ and present a geometric interpretation for the resulting pairing between residue integrals of $d$-forms of weight $d$ and cellular $(d-1)$-cochains on $\Delta(\cD)$, given by integrating along real tori mapping into the open substack $\cX \subset \ocX$.

For each zero stratum $x$ in $\cD$ for which the monodromy group $\pi_1(y)$ (i.e., the stabilizer of the point $x$ in the stack $\ocX$) acts by alternating permutation on the local branches of $\cD$, choose an ordering $\sigma$ of these branches.  Then the cellular chains $[(x, \sigma)]$ give a basis for $C_{d-1} \Delta(\cD, \Q)$.  We consider the dual basis for the group of cellular cochains $C^{d-1}(\cD, \Q)$, and write $\delta_x$ for the basis cochain dual to $[(x,\sigma)]$.  (Note that $\delta_x$ depends on the choice of ordering $\sigma$; permuting the local branches at $x$ will multiply $\delta_x$ by the sign of the permutation.)

For each basis cochain $\delta_x$, choose an \'etale neighborhood $f: V \rightarrow \ocX$, such that $V$ is a variety, $x$ has a unique preimage $y$, and the preimage of $\cD$ is a divisor with simple normal crossings.  Let $D_1, \ldots, D_d$ be the irreducible components of $f^{-1}(D)$ that contain $y$.  Then we can choose local holomorphic coordinates $z_1, \ldots, z_d$ so that $D_i$ is the vanishing locus of $z_i$, for $1 \leq i \leq d$, with the ordering induced by $\sigma_x$.

For sufficiently small positive $\epsilon$, the real torus
\[ 
T_\epsilon = \Big\{ (z_1, \ldots, z_d) \ | \ |z_i| = \epsilon \mbox{ for } 1 \leq i \leq d \Big\}
\]
is contained in $f^{-1}(\cX)$.  The ordering of the coordinates induces an orientation on $T_\epsilon$, and the push forward of the fundamental class $f_*[T_\epsilon] \in H_d (\cX, \Q)$ depends only on the cellular cochain $\delta_x \in C^{d-1}(\Delta(\cD), \Q)$, i.e.,~it is independent of the choice of neighborhood and the choice of $\epsilon$.  It is also independent of the choice of ordering $\sigma$, in the sense that permuting the branches induces multiplication by the sign of the permutation on both $\delta_x$ and $f_*[T_\epsilon]$.  Extending linearly, we obtain a homomorphism $C^{d-1}(\Delta(\cD); \Q) \rightarrow H_d(\cX, \Q)$.  We refer to the classes in the image as \emph{torus classes} and write $\mathfrak{t}_\delta$ for the torus class that is the image of $\delta \in C^{d-1}(\Delta(\cD); \Q)$.

By Cauchy's formula and the definition of Poincar\'e residues, we have
\[ 
\textstyle{\big \langle \delta_x, \int_x \res_x \omega \big \rangle = \frac{1}{(2 \pi \sqrt{-1})^d} \cdot \int_{\mathfrak t_{\delta_x}} \omega}
\] 
for any logarithmic $d$-form $\omega$ in $\Gamma(\ocX, \cA^d_{\ocX} \log(D))$.  By linearity, this gives 
\[ 
\textstyle{\big \langle \delta, \int_{\widetilde \cD^{(d)}} \res_{\widetilde \cD^{(d)}} \omega \big \rangle = \frac{1}{(2 \pi \sqrt{-1})^d} \cdot \int_{[\mathfrak t_\delta]} \omega}
\] 
for any logarithmic $d$-form $\omega$ and any cellular cochain $\delta \in C^{d-1}(\Delta(\cD); \Q)$.

Note that, since $\res_{\widetilde \cD^{(d)}}$ vanishes on $W_{d-1} \Gamma(\ocX, \cA^d_{\ocX} \log(D))$, these torus classes pair naturally with $\Gr_{2d}^W H^d(\cX, \C)$.  We have the following immediate consequence of Propositions~\ref{prop:topweight} and \ref{prop:cyclesandboundaries}.

\begin{corollary} \label{cor:relations} The torus classes associated to cellular $(d-1)$-cochains on $\Delta(\cD)$ generate $\Gr_{2d}^W H^d(\cX, \Q)^\vee$, and the torus class $[\mathfrak t_\delta]$ vanishes in $H_d(\cX, \Q)$ if and only if $\delta$ is a coboundary.
\end{corollary}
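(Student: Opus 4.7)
The plan is to unwind the isomorphism of Proposition~\ref{prop:topweight} using the pairing formula
\[
\textstyle\int_{\mathfrak t_\delta}\omega = (2\pi\sqrt{-1})^d \big\langle\delta,\int_{\widetilde\cD^{(d)}} \res_{\widetilde\cD^{(d)}}\omega\big\rangle
\]
derived immediately above the corollary, thereby reducing both assertions to standard facts about the evaluation pairing between rational cellular cochains and cycles.

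As noted above the corollary, the torus class $[\mathfrak t_\delta]$ annihilates $W_{2d-1}H^d(\cX;\Q)$, so it determines a functional on $\Gr_{2d}^W H^d(\cX;\C)$. By Proposition~\ref{prop:cyclesandboundaries}(ii) and (iii), sending a closed logarithmic $d$-form $\omega$ to the cellular cycle $\int_{\widetilde\cD^{(d)}} \res\omega$ descends to a well-defined map $\Gr_{2d}^W H^d(\cX;\C) \to \widetilde H_{d-1}(\Delta(\cD);\C)$, and the appendix proof of Proposition~\ref{prop:topweight} (see \S\ref{app:residues}) identifies this map with the isomorphism of that proposition. The pairing formula therefore identifies $[\mathfrak t_\delta]|_{\Gr_{2d}^W H^d}$, up to the nonzero scalar $(2\pi\sqrt{-1})^d$, with the functional on $\widetilde H_{d-1}(\Delta(\cD);\C)$ given by evaluating the cochain $\delta$ on cycles. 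Since $\cX$ sits inside a proper DM stack, $H^d(\cX;\Q)$ is finite-dimensional, so a class in $H_d(\cX;\Q)$ vanishes if and only if it pairs trivially with $H^d(\cX;\Q)$; combined with the annihilation of $W_{2d-1}$, this yields that $[\mathfrak t_\delta]=0$ if and only if $\langle\delta,\gamma\rangle=0$ for every cellular cycle $\gamma\in Z_{d-1}(\Delta(\cD);\Q)$.

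For the first statement, every class in $\widetilde H^{d-1}(\Delta(\cD);\Q)$ lifts to a cocycle, and the perfect pairing $\widetilde H^{d-1}(\Delta(\cD);\Q) \cong \widetilde H_{d-1}(\Delta(\cD);\Q)^\vee$ (valid by the universal coefficient theorem together with Lemma~\ref{lem:cellularhomology}(2)) shows that every element of $\Gr_{2d}^W H^d(\cX;\Q)^\vee$ arises as the functional induced by some $[\mathfrak t_\delta]$. For the second, if $[\mathfrak t_\delta]=0$, pairing with boundaries $\partial c$ gives $\langle d\delta, c\rangle = \langle\delta,\partial c\rangle = 0$ for all $c$, so $\delta$ is a cocycle; the resulting class in $\widetilde H^{d-1}(\Delta(\cD);\Q)$ annihilates all of $\widetilde H_{d-1}$, hence vanishes by perfectness, and therefore $\delta$ is a coboundary. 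The converse is immediate, since a coboundary pairs trivially with every cycle, forcing $\int_{\mathfrak t_\delta}\omega=0$ for every closed log $d$-form $\omega$ and hence $[\mathfrak t_\delta]=0$.

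The only nontrivial ingredient is the identification, in the second paragraph, of the Proposition~\ref{prop:topweight} isomorphism with the map induced by residue integrals on closed log forms; this is exactly what the appendix proof supplies, and everything else is essentially formal diagram-chasing with the pairing between rational cellular (co)chains.
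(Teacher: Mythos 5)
Your proof is correct and fills in exactly the details the paper is suppressing when it calls the corollary ``immediate''; the strategy—annihilation of $W_{2d-1}$, the pairing formula against residue integrals, the isomorphism of Proposition~\ref{prop:topweight}, and the perfect pairing between rational cellular (co)homology—is the intended one. One small misattribution: the well-definedness of the map $[\omega] \mapsto [\int_{\widetilde\cD^{(d)}}\res\omega]$ on $\Gr_{2d}^W H^d$ (that closed forms give cycles and that the class depends only on $[\omega]$) is not a consequence of Proposition~\ref{prop:cyclesandboundaries}(ii) and (iii), which are surjectivity statements in the opposite direction; it is established directly by the weight spectral sequence argument in \S\ref{app:residues}, which you also (correctly) invoke in the same sentence.
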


\noindent This corollary gives a natural identification of the relations between torus cycles in middle degree on $\cM_{g,n}$ in terms of coboundaries on $\Delta_{g,n}$.  In \S\ref{sec:abcycle}, we reinterpret this statement in terms of abelian cycles for the mapping class group.

\medskip

\begin{remark}  \label{rem:torus}
We can also define torus classes below the middle degree, corresponding to cellular cochains of degree less than $(d-1)$, as follows.  On each stratum $\cY$ of codimension $k$ for which $\pi_1^{et}(\cY)$ acts by alternating permutations on the local branches of $\cD$, choose a point $y$ and an ordering $\sigma$ of the local branches at $y$.  The classes $[(y, \sigma)]$ form a basis for the group of cellular chains $C_{k-1}(\Delta(\cD), \Q)$.  We consider the dual basis for cellular cochains, and write $$\delta_\cY \in C^{k-1}(\Delta(\cD, \Q))$$ for the basis element dual to $[(y, \sigma)]$.  (Note, once again, that this cochain $\delta_\cY$ depends on the choice of the ordering $\sigma$, up to a factor of the sign of the reordering.)

We have a natural degree $k$ class $[\mathfrak t_{\delta_\cY}]$, depending only on the cellular cochain $\delta_\cY$, constructed as follows.  Choose an \'etale neighborhood $f: V \rightarrow \cX$, finite over an open subset that meets $\cY$, such that $V$ is a variety and $f^{-1}(\cD)$ is a divisor with simple normal crossings, and fix a point $z \in V$ that maps to $y$.  Choose local holomorphic coordinates $z_1, \ldots, z_d$ in a neighborhood of $z$ such that $f^{-1}(\cD)$ is the vanishing locus of $z_1 \cdots z_k$, and the ordering of $z_1, \ldots, z_k$ is given by $\sigma$.  Then the real $k$-torus
\[
T_{\epsilon} = \Big\{ (z_1, \ldots, z_d) \ \Big | \ |z_i| = \left\{ \begin{array}{ll} \epsilon &\mbox{ for } 1 \leq i \leq k, \\
			    0 & \mbox{ for } i > k, \end{array} \right. \Big\}
\] 
is contained in $f^{-1}(\cX)$ for $\epsilon$ sufficiently small, the ordering of the coordinates induces an orientation on $T_\epsilon$, and the homology class
\[
[\mathfrak t_{\delta_\cY}] = f_*[T_\epsilon]
\] 
depends only on the $(k-1)$-cochain $\delta_\cY$, in the sense that reordering the branches at $y$ multiplies both $\delta_\cY$ and $f_*[T_\epsilon]$ by the sign of the permutation.  Extending linearly gives a canonical homomorphism $C^{k-1}(\Delta(\cD), \Q) \rightarrow H_k(\cX, \Q)$.  We refer to the classes in the image as \emph{torus classes}, and these classes pair naturally with $\Gr_{2k}^W H^k(\cX, \Q)$.
\end{remark}

\section{Abelian cycles for the pure mapping class group} \label{sec:abcycle}

We now discuss the natural identification of torus classes in $\cM_{g,n}$ associated to cellular cochains on $\Dgn$ with abelian cycles for the pure mapping class group.  Using this identification, we describe all relations among abelian cycles in middle cohomological degree in terms of cellular coboundaries in the tropical moduli space $\Dgn$.

\bigskip

Let $\cY \subset \ov \cM_{g,n}$ be a boundary stratum of codimension $k$, and let $\gamma_1, \ldots, \gamma_k$ be disjoint simple closed curves on an oriented surface $S$ of genus $g$ with $n$ marked punctures such that the space obtained by collapsing the curves $\gamma_1, \ldots, \gamma_k$ to nodes and filling the punctures with marked points has the topological type of the semistable algebraic curves parametrized by $\cY$.  Let $\Mod(S)$ be the pure mapping class group of $S$, i.e.,\ the subgroup of the mapping class group respecting the marked punctures, and let $T_i \in \Mod(S)$ be the right-handed Dehn twist along $\gamma_i$.  Since the curves $\gamma_1, \ldots, \gamma_k$ are disjoint, the Dehn twists $T_1, \ldots, T_k$ commute, inducing a group homomorphism $\Z^k \rightarrow \Mod(S)$.  The push forward of the fundamental class $[\Z^k]$ is the \emph{abelian cycle} denoted $\{ T_1, \ldots, T_k \} \in H_k(\Mod(S); \Q)$.  See \cite{BrendleFarb07, ChurchFarb12} for further details and background on abelian cycles.

Suppose $\pi_1^{et}(\cY,y)$ acts by even permutations on the branches of $\cD$.  Note that the ordering of the curves $\gamma_1, \ldots, \gamma_k$ determines an ordering of the local branches of $\cD$ at $y$.  As in Remark~\ref{rem:torus}, these data determine a cellular cochain $\delta_\cY \in C^{k-1}(\Delta(\mathcal{D}))$ and a torus class $[\mathfrak t_{\delta_\cY}] \in H_k (\cM_{g,n}, \Q)$.

\begin{proposition} \label{prop:abelian} 
The torus class $[\mathfrak t_{\delta_\cY}] \in H_{k}(\cM_{g,n}; \Q)$ is identified with the abelian cycle $\{ T_1, \ldots, T_k \}$ under the natural isomorphism $H_*(\cM_{g,n}; \Q) \rightarrow H_*(\Mod(S); \Q)$.
\end{proposition}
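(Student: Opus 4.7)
The plan is to identify the map $T_\epsilon \to \cM_{g,n}$ used in the definition of $[\mathfrak{t}_{\delta_\cY}]$ with the classifying map of a family of Riemann surfaces over $(S^1)^k$ whose monodromy representation $\pi_1(T_\epsilon) = \Z^k \to \Mod(S)$ sends the $i$-th generator to the Dehn twist $T_i$. Granting this identification, the pushforward of the fundamental class $[T_\epsilon]$ under $\cM_{g,n} \simeq_\Q B\Mod(S)$ will be, by definition, the abelian cycle $\{T_1,\ldots,T_k\}$, so that the proposition follows.

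First I would fix a base point $y \in \cY$ and an \'etale neighborhood $f\colon V \to \ocM_{g,n}$ of $y$ with local holomorphic coordinates $(z_1,\ldots,z_d)$ such that $D_i = \{z_i = 0\}$, for $1 \le i \le k$, are the local branches of $\cD$ at $y$, ordered as prescribed by $\sigma$. Since $y$ lies in the codimension $k$ stratum, each $D_i$ is naturally the locus in the local versal deformation along which the $i$-th node (obtained by collapsing $\gamma_i$) persists, i.e., $z_i$ is a smoothing parameter for $p_i$. The torus $T_\epsilon \subset V$ defining $[\mathfrak{t}_{\delta_\cY}]$ lies in the preimage of $\cM_{g,n}$ and inherits the product orientation from the ordering $(z_1,\ldots,z_k)$.

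The main step is the classical Picard--Lefschetz computation for nodal degenerations of curves: a one-parameter family with local model $xy = z_i$ has monodromy equal to the right-handed Dehn twist along the vanishing cycle, which in our situation is $\gamma_i$. Because the $k$ nodes are disjoint and have independent smoothing parameters $z_1,\ldots,z_k$, applying this computation simultaneously shows that the restriction of the universal curve over $V$ to $T_\epsilon$ is a family of smooth oriented surfaces whose monodromy representation $\Z^k = \pi_1(T_\epsilon) \to \Mod(S)$ sends the $i$-th standard generator to $T_i$. Since the $T_i$ commute, this classifying map factors, up to homotopy, through the map $B\Z^k = (S^1)^k \to B\Mod(S)$ induced by $(a_1,\ldots,a_k) \mapsto T_1^{a_1}\cdots T_k^{a_k}$, and so pushing $[T_\epsilon]$ forward yields $\{T_1,\ldots,T_k\}$ in $H_k(\Mod(S);\Q)$.

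Finally I would verify the compatibility of orientation and sign conventions. The same ordering $(z_1,\ldots,z_k)$ determines both the orientation on $T_\epsilon$ and the trivialization $\sigma$ of the sign local system at $y$ used to normalize $\delta_\cY$, so any reordering scales both $[\mathfrak{t}_{\delta_\cY}]$ and $\{T_1,\ldots,T_k\}$ by the same sign. Passing back from $V$ to the stack $\ocM_{g,n}$ is innocuous because torus classes and abelian cycles are both defined via maps out of $T_\epsilon$; under the alternating-monodromy hypothesis, deck transformations over $\cY$ act by even permutations on the $\gamma_i$ and hence fix $\{T_1,\ldots,T_k\}$, consistent with the sign conventions of Lemma~\ref{lem:cellularbasis}. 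The main obstacle is the careful bookkeeping in this last step---in particular, confirming that the Picard--Lefschetz formula produces the \emph{right-handed} Dehn twist in the orientation convention used to define abelian cycles, and that the product orientation on $T_\epsilon$ matches the convention used to define $\{T_1,\ldots,T_k\}$ as the pushforward of the fundamental class of $B\Z^k$.
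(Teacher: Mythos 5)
Your proposal follows essentially the same route as the paper: pass to an \'etale neighborhood $V$ of a preimage $z$ of $y$, observe that the pushforward of $[T_\epsilon]$ corresponds under $H_*(\cM_{g,n};\Q) \cong H_*(\Mod(S);\Q)$ to the pushforward of the fundamental class of $\pi_1(T_\epsilon) \cong \Z^k$, and reduce to the local Picard--Lefschetz computation showing that monodromy around $D_i$ is the right-handed Dehn twist $T_i$ along the vanishing cycle $\gamma_i$. The paper handles the local computation by restricting the universal curve to a small disc in a smooth curve $Z \subset V$ transverse to $D_i$, citing Korkmaz--Stipsicz for the Lefschetz-fibration computation, while you state the same fact directly in the multi-parameter model; the orientation/sign bookkeeping you flag as the main obstacle is treated in the paper only by appeal to this reference. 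So this is the same proof.
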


\begin{proof} Let $B$ be an oriented $k$-manifold, with $f\col B \rightarrow \cM_{g,n}$ a continuous map.  Under the natural identification $H_*(\cM_{g,n}; \Q) \xrightarrow{\sim} H_*(\Mod(S); \Q)$, the class $f_*[B]$ maps to the push forward of the fundamental class of $\pi_1(B)$ under the induced map $\pi_1(B) \rightarrow \Mod(S)$.

Choose an \'{e}tale neighborhood $f\col V \rightarrow \ov \cM_{g,n}$ whose image meets $\cY$, where $V$ is a smooth variety and $f^{-1} (\partial \ov \cM_{g,n})$ is a divisor with simple normal crossings.  Let $z$ be a point in the preimage of $y$.  Consider the induced map from a $k$-torus $T_\epsilon \rightarrow \cM_{g,n}$, as in \S\ref{sec:torus}.  The torus class $[\mathfrak t_{\delta_\cY}]$ is defined to be the push forward of the fundamental class $[T_\epsilon]$, which is identified with the push forward of the fundamental class of $\pi_1(T_\epsilon)$ under the induced map to $\Mod(S)$.  The choice of coordinates identifies $\pi_1(T_\epsilon)$ with the free abelian group $\Z^k$, and we must show that the $i$th basis element maps to the right handed Dehn twist $T_i$.  In other words, we must show that the monodromy action on the universal curve pulled back to a small loop around the boundary divisor $D_i \subset V$ is a right handed Dehn twist along the curve $\gamma_i$ that contracts to a node as one moves toward $D_i$.

Let $Z \subset V$ be a smooth algebraic curve that meets $D_i$ transversally at $z$, and consider a small disc around $z$ in $Z$.  Then a well-known local computation (see, for instance, \cite[\S3.2]{KorkmazStipsicz09}) shows that the pullback of the universal curve to this small disc is a Lefschetz pencil, the vanishing cycle is $[\gamma_i]$, and the monodromy action on a smooth fiber is the right handed Dehn twist $T_i$ along the vanishing cycle, as required.
\end{proof}

Having identified these abelian cycles with torus classes in $\cM_{g,n}$, we can describe all relations among them in terms of coboundaries on the tropical moduli space.

\begin{theorem} \label{thm:abelianrelations} A linear combination of abelian cycles in $H_{3g-3+n}(\cM_{g,n}; \Q)$ vanishes if and only if the corresponding cellular $(3g-4+n)$-cochain on $\Dgn$ is a coboundary.
\end{theorem}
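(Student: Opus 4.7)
The plan is to deduce Theorem~\ref{thm:abelianrelations} by combining Proposition~\ref{prop:abelian} with Corollary~\ref{cor:relations}. The key observation is that $\dim_\C \cM_{g,n} = d := 3g-3+n$, so middle homological degree is exactly $d$, and a middle-degree abelian cycle $\{T_{\gamma_1}, \ldots, T_{\gamma_d}\}$ comes from a system of $d$ disjoint simple closed curves on the $n$-punctured surface $S$ of genus $g$. Such a maximal system is a pants decomposition, and collapsing its curves to nodes produces a maximally degenerate stable curve, i.e., a zero-dimensional boundary stratum $\cY$ of $\ocM_{g,n}$ -- equivalently, a top-dimensional cell of $\Dgn$. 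Every middle-degree abelian cycle arises in this way (cycles involving inessential curves vanish, as the corresponding Dehn twists are trivial).

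First I would set up the correspondence between abelian cycles and cellular cochains explicitly. An ordered pants decomposition $(\gamma_1, \ldots, \gamma_d)$ determines an ordering $\sigma$ of the $d$ local analytic branches of $\cD = \ocM_{g,n} \setminus \cM_{g,n}$ at a point $y \in \cY$, hence a class $[(y,\sigma)] \in C_{d-1}(\Dgn;\Q)$; write $\delta_{\cY,\sigma} \in C^{d-1}(\Dgn;\Q)$ for the dual basis cochain, which is nonzero precisely when the monodromy at $y$ acts by alternating permutations, by Lemma~\ref{lem:cellularbasis}. Reordering the $\gamma_i$'s by a permutation $\pi$ multiplies $\delta_{\cY,\sigma}$ by $\mathrm{sgn}(\pi)$, matching the sign change of the abelian cycle under the same reordering. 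This assignment extends $\Q$-linearly to a well-defined map from formal $\Q$-linear combinations of middle-degree abelian cycles to the cellular cochain group $C^{d-1}(\Dgn;\Q)$, and the theorem statement is then the assertion that this map sends vanishing linear combinations precisely to coboundaries.

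Next, Proposition~\ref{prop:abelian} identifies each abelian cycle $\{T_{\gamma_1}, \ldots, T_{\gamma_d}\}$ with the torus class $[\mathfrak{t}_{\delta_{\cY,\sigma}}] \in H_d(\cM_{g,n};\Q)$ under the natural isomorphism $H_*(\cM_{g,n};\Q) \cong H_*(\Mod(S);\Q)$. By linearity, a $\Q$-linear combination of abelian cycles, with corresponding cochain $\delta = \sum c_i \delta_{\cY_i,\sigma_i}$, equals the torus class $[\mathfrak{t}_\delta]$. Corollary~\ref{cor:relations} then gives at once that this class vanishes in $H_d(\cM_{g,n};\Q)$ if and only if $\delta$ is a coboundary in $C^*(\Dgn;\Q)$, which is precisely the statement of Theorem~\ref{thm:abelianrelations}.

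The only real subtlety, which I would want to treat carefully, concerns zero-strata $\cY$ whose monodromy group acts by odd permutations on the local branches; there Proposition~\ref{prop:abelian} does not directly apply since no torus class is defined. In this case, Lemma~\ref{lem:cellularbasis} forces the basis cochain $\delta_{\cY,\sigma}$ to vanish in $C^{d-1}(\Dgn;\Q)$, so for the correspondence to be consistent the corresponding abelian cycle must also vanish. This is indeed so: any automorphism of the stable curve permuting $\gamma_1, \ldots, \gamma_d$ by an odd permutation $\pi$ induces an inner automorphism of $\Mod(S)$, hence acts trivially on $H_d(\Mod(S);\Q)$, while simultaneously sending the abelian cycle to its sign-twisted self, forcing $\{T_{\gamma_1}, \ldots, T_{\gamma_d}\} = -\{T_{\gamma_1}, \ldots, T_{\gamma_d}\} = 0$ in rational homology. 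With this check, the correspondence and conclusion are consistent on every stratum, and the theorem follows with no further work beyond invoking Proposition~\ref{prop:abelian} and Corollary~\ref{cor:relations}. The main obstacle is thus purely bookkeeping: matching sign conventions between cellular cochains on the generalized $\Delta$-complex $\Dgn$ and abelian cycles obtained from ordered tuples of commuting Dehn twists.
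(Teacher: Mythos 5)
Your proof is correct and follows essentially the same route as the paper, which proves Theorem~\ref{thm:abelianrelations} in a single sentence by citing Corollary~\ref{cor:relations} (with the identification of torus classes and abelian cycles from Proposition~\ref{prop:abelian} being implicit from the preceding paragraph). The value you add is in carefully addressing two bookkeeping points that the paper's one-line proof elides. First, you observe that a middle-degree abelian cycle corresponds to a $(3g-3+n)$-tuple of disjoint simple closed curves, that such a tuple is a pants decomposition precisely when it indexes a zero-dimensional boundary stratum, and that degenerate tuples (with inessential or repeated curves) give vanishing abelian cycles, so no information is lost. Second, and more importantly, you note that Proposition~\ref{prop:abelian} and Corollary~\ref{cor:relations} only speak directly to strata whose monodromy acts by \emph{even} permutations on the local branches; for strata with odd monodromy, the basis cochain $\delta_{\cY,\sigma}$ already dies over $\Q$ by Lemma~\ref{lem:cellularbasis}, so the correspondence forces the matching abelian cycle to vanish. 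Your argument for this---that an odd permutation of the pants curves is realized by conjugation by a mapping class, which is inner and hence acts trivially on $H_*(\Mod(S);\Q)$ while also flipping the sign of the abelian cycle---is the right one and fills a genuine gap in the terse proof given in the paper.
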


\begin{proof} This is given by the special case of Corollary~\ref{cor:relations} where $\cX = \cM_{g,n}$ and $\ocX$ is the Deligne--Mumford compactification $\overline \cM_{g,n}$.
\end{proof}

\section{Contractibility of the repeated marking subcomplex} \label{sec:contractibility}

We now return to the combinatorial topology of $\Dgn$ and proceed with the proof of Theorem~\ref{thm:contractible}.
Let $\rep \subseteq \Dgn$ be the subcomplex parametrizing volume 1 tropical curves $(G,\ell,m,w)$ in which the marking function $m\colon \{1,\ldots,n\}\to V(G)$ is not injective,~i.e., some vertex has at least two of the marked points. We say that such a curve has {\em repeated markings,} and similarly we say that $\Gmw=(G,m,w)\in J_{g,n}$ is {\em repeating} in this situation.  The main result of this section, restated from the introduction, is the following.

{ \renewcommand{\thetheorem}{\ref{thm:contractible}}
\begin{theorem} For all $g>0$ and $n>1$, the repeated marking subcomplex $\rep$ is contractible.
 \end{theorem}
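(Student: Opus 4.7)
The plan is to construct an explicit deformation retraction of $\rep$ onto the $0$-cell $\Gamma^\ast \in \rep$ represented by a single vertex of weight $g-1$ carrying a loop of length $1$ with all $n$ marked points attached. The hypotheses $g > 0$ and $n > 1$ guarantee that this marked weighted graph is stable and that its marking function is constant, so $\Gamma^\ast$ indeed lies in $\rep$.

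I would proceed in two stages. In the first stage, I would retract $\rep$ onto the subcomplex $K \subseteq \rep$ of curves in which every edge has at least one endpoint at a multi-marked vertex. For each combinatorial type $\Gmw \in J_{g,n}$ appearing in $\rep$, let $V_{\mathrm{rep}}(\Gmw) \subseteq V(\Gmw)$ be the set of vertices supporting at least two markings, and let $E^\flat(\Gmw) \subseteq E(\Gmw)$ be the edges with both endpoints outside $V_{\mathrm{rep}}(\Gmw)$. At time $t \in [0,1]$, multiply the length of each $e \in E^\flat$ by $(1-t)$ and rescale the remaining edge lengths by the unique compensating factor that restores total volume to $1$; at $t = 1$ the edges of $E^\flat$ have been collapsed, and the resulting curve lies in $K$. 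Because $V_{\mathrm{rep}}$ and $E^\flat$ depend only on the intrinsic marking combinatorics of $\Gmw$, they are $\mathrm{Aut}(\Gmw)$-invariant and hence descend to the quotient cell.

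In the second stage, I would retract $K$ onto $\Gamma^\ast$. On $K$ every edge already touches a multi-marked vertex, which is a strong combinatorial restriction, and I would proceed by induction on the total number of edges: within each cell, identify a canonical edge whose contraction stays inside $K$, apply an analogous linear-scaling homotopy to collapse it, and iterate until only a single loop at a vertex bearing all $n$ markings remains, namely $\Gamma^\ast$.

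The hard part will be verifying that the first-stage homotopy is continuous across face identifications. The subtle case is the contraction of an edge $e' \notin E^\flat$ incident to exactly one vertex $v \in V_{\mathrm{rep}}$ and one vertex $v' \notin V_{\mathrm{rep}}$: in the face $\Gmw/e'$ the merged vertex lies in $V_{\mathrm{rep}}(\Gmw/e')$, so edges that were in $E^\flat(\Gmw)$ merely because they touched $v'$ are no longer in $E^\flat(\Gmw/e')$, and the two prescriptions of the homotopy do not a priori agree on the shared boundary $\sigma(\Gmw/e') \subset \sigma(\Gmw)$. To overcome this I would either refine $E^\flat$ using a distance-from-$V_{\mathrm{rep}}$ stratification so that the scaling rule depends continuously on edge-to-$V_{\mathrm{rep}}$ distance and hence patches across such contractions, or decompose the single homotopy into a convergent sequence of smaller ``peeling'' retractions in each of which the analogous set $E^\flat$ is stable under every occurring face map. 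If neither refinement is tractable, an appealing alternative is to build a discrete Morse matching on cells of $\rep$, pairing each non-critical cell with the contraction of a canonically chosen edge adjacent to a canonically chosen multi-marked vertex so that $\Gamma^\ast$ is the unique critical cell; the acyclicity of such a matching would give contractibility of $\rep$ directly.
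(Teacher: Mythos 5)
Your overall strategy of constructing an explicit deformation retraction is the right idea, and you have correctly identified the central technical obstacle: the formula for your Stage~1 homotopy, which rescales the edges in $E^\flat(\Gmw)$ by $(1-t)$, is defined cell-by-cell, and the set $E^\flat$ is \emph{not} compatible with face maps. When an edge $e'$ with one endpoint $v\in V_{\mathrm{rep}}(\Gmw)$ and one endpoint $v'\notin V_{\mathrm{rep}}(\Gmw)$ is contracted, the merged vertex becomes multi-marked, so edges formerly in $E^\flat$ that touched $v'$ fall out of $E^\flat$ in the contracted type, and the two formulas disagree on the shared face. Naming this problem is progress, but the three fixes you sketch do not resolve it. Scaling by a \emph{combinatorial} distance to $V_{\mathrm{rep}}$ jumps discontinuously under exactly the same contractions; a metric-distance scaling that vanishes precisely on positive-distance edges is itself discontinuous at distance zero; the ``convergent sequence of peeling retractions'' is stated without identifying which invariant stabilizes $E^\flat$ at each stage; and the discrete Morse matching is only named, not constructed (and would have to respect the non-free $S_{p+1}$-actions on cells of this generalized $\Delta$-complex). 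Your Stage~2 has the same shape of problem in sharper form: a ``canonical edge whose contraction stays in $K$'' must be chosen compatibly with automorphisms and face maps, and no such rule is given. As written, the argument has a genuine gap.

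The missing idea is to replace multi-marked-vertex data, which is unstable under contraction, by an invariant that behaves monotonically. The paper's proof filters $\rep$ by the number of \emph{core} edges, $\Delta_0\subseteq\Delta_1\subseteq\cdots\subseteq\Delta_{3g-3+n}=\rep$, using the structural observation that a tropical curve lies in $\rep$ if and only if some core vertex \emph{supports} at least two markings (i.e., at least two markings lie in the tree hanging off that core vertex). One then retracts $\Delta_i$ onto $\Delta_{i-1}$ by uniformly shortening core edges while lengthening a single distinguished non-core bridge at each core vertex supporting repeated markings. The crucial point is that the quantities entering the formula---which edges are core, the minimum core-edge length $c$, the number $j$ of core vertices supporting repeated markings, and which edges are distinguished bridges---degenerate consistently along faces: when a core edge shrinks to $0$, $c$ becomes $0$ and the homotopy becomes the identity, matching the trivial homotopy on $\Delta_{i-1}$. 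This is precisely the kind of ``stable under every occurring face map'' structure your second proposed fix calls for but does not supply. Finally, $\Delta_0$ (trees with a single weight-$g$ vertex) is contracted by pushing all length onto the edge incident to that vertex, and composing the retractions finishes the proof.
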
 \addtocounter{theorem}{-1} }

\begin{proof} 
Recall from \S\ref{subsec:J-g-n} that the {\em core} of $\Gmw = (G,m,w) \in J_{g,n}$ is the smallest connected subgraph of $G$ that contains all cycles and all vertices of positive weight, and that when $g>0$, the core is necessarily nonempty.  

Let $\Delta_i$ be the subcomplex of $\rep$ parametrizing volume 1 curves with at most $i$ core edges.  So $\rep = \Delta_{3g-3+n} \supseteq \cdots \supseteq \Delta_0$.  To prove the theorem, we will define, for each $i>0$, a map
\[ 
\rho_i\col \Delta_i \times[0,1] \rightarrow\Delta_{i}
\] 
that we will show is a strong deformation retraction onto $\Delta_{i-1}$.  After showing that $\rho_i$ is a deformation retract, we will prove that $\Delta_0$ is contractible.  Finally, applying each retraction $\rho_i$ in turn will give the theorem.

Let  $\Gmw = (G,m,w)\in J_{g,n}$ with core $C$.  Then $G-E(C)$ is a disjoint union of trees $Y_v$, each meeting the core at a single vertex $v$.  Say that a core vertex $v \in V(G)$ {\em supports} a marked point $\alpha\in\{1,\ldots,n\}$ if $m(\alpha)\in Y_v$.  In other words, the vertices $m(\alpha)$ and $v$ are in the same connected component $Y_v$ of $G-E(C)$.  
Then we make the following structural observation: the tropical curves in $\rep$ are precisely those in which some core vertex supports more than one marking: this follows from the fact that every vertex of $Y_v$ besides $v$ has weight 0, so any leaf of $Y_v$ has at least two markings.

Now for each $i>0$, we will describe $\Delta_i$ as follows.  First, let $T_i$ be the set of $\Gmw=(G,m,w) \in J_{g,n}$ such that:
 \begin{itemize}
 \item
 $G$ has {\em exactly} $i$ core edges, 
 \item $m$ is not injective, and 
 \item every core vertex $v$ of $G$ that supports more than one marked point is incident to exactly one non-core edge.  We call that edge a {\em distinguished bridge}.  
 \end{itemize}
For each $\Gmw \in T_i$, let $\sigma^1(\Gmw)$ denote the simplex $$\sigma^1(\Gmw)=\{\ell\colon E(G)\rightarrow \R_{\ge0}: \sum \ell(e) = 1\},$$ and write $\sim$ for the equivalence relation on $\coprod_{\Gmw \in T_i}{\sigma^1(\Gmw)}$ whose classes are fibers of the map to $\Dgn$.  Then we have
\[ \Delta_i \cong \Big (\coprod_{\Gmw\in T_i}{\sigma^1(\Gmw)} \Big)/\sim.
\] Indeed, the natural map $\big(\big(\coprod_{\Gmw \in T_i}{\sigma^1(\Gmw)}\big)/ \!\! \sim \! \! \big )  \rightarrow \Dgn$ is a continuous injection from a compact to a Hausdorff space, so it is a homeomorphism onto its image.  In fact, that image is exactly $\Delta_i$; this follows from the observation that any tropical curve with less than $i$ core edges can be obtained by contractions from a graph with exactly $i$ core edges.

We now define a continuous map
\[ \rho_\Gmw\colon \sigma^1(\Gmw) \times [0,1] \rightarrow \sigma^1(\Gmw) \rightarrow \Delta_i,
\]
which we will soon use to define the desired map $\rho_i\col \Delta_i \times[0,1] \rightarrow\Delta_{i}.$

 Let $\ell\col E(G)\rightarrow \R_{\ge 0}$ be a point in $\sigma^1(\Gmw)$ and $t\in[0,1]$.  Let $c$ be the length of the shortest core edge (it is possible that $c=0$).  Let $j>0$ be the number of core vertices that support more than one marked point.  We define $\ell_t=\rho_\Gmw(\ell, t):E(G)\rightarrow \R_{\ge 0}$ by sending
\[ \ell_t(e) = \begin{cases} \ell(e)-tc &\text{if $e$ is a core edge} \\ \ell(e)+tci/j &\text{if $e$ is a distinguished bridge} \\ \ell(e) &\text{else.}
\end{cases}
\] Then $\rho_\Gmw$ is evidently continuous. Note that if $c=0$ then $\ell_t = \ell$ for all $t$.  In other words, on the level of tropical curves, any curve with fewer than $i$ edges in its core is fixed by $\rho_\Gmw$ for all $t$; while $\rho_\Gmw$ sends a curve with exactly $i$ edges in its core to the one obtained by shortening its core edges by $ct$ and apportioning the extra length equally among each of the $j$ distinguished bridges (which, for the purpose of this description, are allowed to have zero length to begin with).  In other words, the map
$$\coprod{\rho_\Gmw}\colon \coprod{\sigma^1(\Gmw)} \times [0,1] \rightarrow \Delta_i$$
respects $\sim$, and so descends to a continuous map $\rho_i\colon \Delta_i\times [0,1]\rightarrow \Delta_i$ which by its construction is evidently a strong deformation retract onto $\Delta_{i-1}$.

So it remains only to prove the claim that $\Delta_0$ is contractible.  Let $T$ denote the set of objects $\Gmw\in J_{g,n}$ in which there is a single vertex $v_0$ of weight $g$ which is 1-valent and has no marked points on it.  Then
\[ \Delta_0 \cong \Big(\coprod_{\Gmw\in T} \sigma^1(\Gmw)\Big)/\sim,
\] 
where again $\sim$ denotes the equivalence relation whose classes are fibers of the canonical map to $\Delta_{g,n}$.
This is because any combinatorial type of tropical curve appearing in $\Delta_0$ may be recovered from some $\Gmw\in T$ by contracting the unique edge incident to the vertex $v_0$ of weight $g$. Now for each $\Gmw = (G,m,w)\in T$, define a continuous map $\rho_\Gmw\col \sigma^1(\Gmw)\times[0,1]\rightarrow \sigma^1(\Gmw) \rightarrow\Delta_0$ as follows.  Let $\ell\col E(\Gmw)\rightarrow \R_{\ge 0}$ be a point in $\sigma^1(\Gmw)$ and $t\in[0,1]$, and let $e_0$ be the unique edge incident to the weight $g$ vertex in $G$. Then we define $\ell_t=\rho_\Gmw(\ell, t)\colon E(\Gmw)\rightarrow \R_{\ge 0}$ by
\[ \ell_t(e) = \begin{cases} (1-t)\ell(e)+t &\text{if $e = e_0$} \\ (1-t)\ell(e)&\text{else.}
\end{cases}
\] This is again evidently a continuous map, and the map $\coprod \rho_\Gmw$ again descends to a continuous map $\rho_0\colon \Delta_0\times [0,1]\to \Delta_0$ which is a deformation retraction onto a point in $\Delta_0$.  Namely, the retraction is onto the tropical curve that has a single bounded edge of length 1, between a vertex of weight $g$ and a vertex of weight $0$ with all $n$ markings.
\end{proof}

\section{Proofs of Theorems~\ref{thm:boundary}, \ref{thm:connected}, \ref{thm:cohomology}, and \ref{thm:dualbasis}} \label{sec:proofs}

In this section, we apply Theorem~\ref{thm:contractible}, which identifies $\rep$ as a large subcomplex in $\Delta_{g,n}$ that is contractible, to prove the rest of the theorems stated in the introduction.  We begin by showing that contracting $\Delta^{\mathrm{rep}}_{1,n}$ produces a bouquet of $(n-1)!/2$ spheres indexed by cyclic orderings of the set $\{1,\ldots,n\}$.

{ \renewcommand{\thetheorem}{\ref{thm:boundary}}
\begin{theorem} Both $\Delta_{1,1}$ and $\Delta_{1,2}$ are contractible.  For $n \geq 3$, the tropical moduli space $\Dn$ is homotopy equivalent to a wedge sum of $(n-1)! / 2$ spheres of dimension $n-1$.
 \end{theorem}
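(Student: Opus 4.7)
The overall plan is to invoke Theorem~\ref{thm:contractible}: for $n\geq 2$ the subcomplex $\Delta^{\mathrm{rep}}_{1,n}\subset\Dn$ of repeated-marking curves is contractible, so $\Dn\simeq \Dn/\Delta^{\mathrm{rep}}_{1,n}$. The bulk of the work is then the classification of the \emph{non-repeating} cells of $\Dn$ and the verification that this quotient has the asserted homotopy type.

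The key structural claim is that for $n\geq 2$, every non-repeating stable graph $\Gmw=(G,m,w)\in J_{1,n}$ is an $n$-cycle with $n$ distinctly marked weight-zero vertices. Indeed, if a tree $Y_v$ is attached to the core at $v$, then $Y_v$ has a leaf of weight zero and valence one, which stability $2w-2+\val>0$ forces to carry at least two markings, contradicting non-repetition. Hence $G$ equals its core. For $g=1$ the core is either a single weight-one vertex (which would carry all $n\geq 2$ markings, violating non-repetition) or a cycle of length $k\geq 1$ with trivial weights; in the latter case each vertex has valence two from the cycle, so stability forces exactly one marking per vertex and $k=n$. The isomorphism classes of such marked $n$-cycles are counted by cyclic orderings of $\{1,\dots,n\}$ modulo reflection, giving $(n-1)!/2$ for $n\geq 3$ and a single class for $n=2$.

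For $n\geq 3$ the distinct labels break the dihedral symmetry, so $\Aut(\Gmw_c)$ is trivial and the corresponding open cell of $\Dn$ is an $(n-1)$-simplex whose closure in $\Dn$ has its full boundary inside $\Delta^{\mathrm{rep}}_{1,n}$ (any nonempty edge contraction identifies at least two distinctly marked vertices). Collapsing $\Delta^{\mathrm{rep}}_{1,n}$ turns each such cell into an $(n-1)$-sphere, and since distinct top-dimensional non-repeating cells have disjoint interiors and boundaries in $\Delta^{\mathrm{rep}}_{1,n}$, the result is a wedge of $(n-1)!/2$ spheres of dimension $n-1$. The small cases are handled directly: $\Delta_{1,1}$ is the single point coming from a loop with the unique marking, and for $n=2$ the unique non-repeating type (two parallel edges between two marked vertices) has a $\Z/2$-automorphism reversing its $1$-simplex of edge-length functions, so its image in $\Dn$ is an interval attached to $\Delta^{\mathrm{rep}}_{1,2}$ at exactly one endpoint (the repeating loop with both markings obtained by contracting an edge), yielding a contractible total space. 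The principal delicate point is the degenerate behavior at $n=2$, where this edge-flipping automorphism prevents a sphere from appearing and gives contractibility rather than a wedge.
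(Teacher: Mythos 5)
Your proof is correct and follows essentially the same strategy as the paper's: invoke Theorem~\ref{thm:contractible}, classify the non-repeating combinatorial types in $J_{1,n}$ as marked $n$-cycles on weight-zero vertices, and analyze the quotient $\Dn/\Delta^{\mathrm{rep}}_{1,n}$, with the $n=2$ case handled separately via the $\Z/2$ edge-swap automorphism. Your discussion of why the $n=2$ cell is a half-interval attached at one point is in fact slightly more explicit than the paper's terse remark that it ``retracts to the repeated marking locus.''
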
 \addtocounter{theorem}{-1} }

\begin{proof} The core of a genus $1$ tropical curve is either a single vertex of weight 1 or a cycle.  Now if $\Gamma \in \Dn \setminus \Delta^{\mathrm{rep}}_{1,n}$ then it cannot possibly contain a weight 1 vertex; if it did, then the underlying graph of $\Gamma$ is a tree, and either it has just one vertex $v_0$ with all $n$ markings, or else and any leaf distinct from $v_0$ has more than one marking on it.  

So the core of $\Gamma$ is a cycle with all vertices of weight zero, and each vertex supports at most one marked point.  The stability condition then ensures that each vertex supports exactly one marked point.  In other words, the combinatorial types of tropical curves that appear outside the repeated marking locus consist of an $n$-cycle with the markings $\{1,\ldots,n\}$ appearing around that cycle in a specified order.  There are $(n-1)!/2$ possible orders $\tau$ of $\{1,\ldots,n\}$ up to symmetry of the $n$-cycle, so we have $(n-1)!/2$ such combinatorial types $\Gmw_\tau.$

Now, the case $n=2$ is special: in this case, the unique cell of $\Delta_{1,2}$ not in $\Delta^{\mathrm{rep}}_{1,2}$ consists of two vertices and two edges between them.  Swapping the edges gives a nontrivial $\Z/2\Z$ automorphism on this cell, which then retracts to the repeated marking locus.  So $\Delta_{1,2}$ is contractible by Theorem~\ref{thm:contractible}.  (The case $n=1$ is even easier, as $\Delta_{1,1}$ is just a point.)

For $n > 2$, each $\Gmw_\tau$ has no  nontrivial automorphisms, so the image of the interior of $\sigma^1(\Gmw_\tau)$ in $\Dn$ is an $(n-1)$-disc whose boundary is identified with the repeated marking locus.  Now it follows again from Theorem~\ref{thm:contractible} that $\Dn$ has the homotopy type of a wedge of $(n-1)!/2$ spheres of dimension $n-1$.  
\end{proof}

Next, we prove a connectivity bound for $\Dgn$.  The key technical input is Theorem~\ref{thm:contractible}(1).

{ \renewcommand{\thetheorem}{\ref{thm:connected}}
\begin{theorem} 
For $g>1$, the tropical moduli space $\Dgn$ is $(n - 5g + 4)$-connected.
 \end{theorem}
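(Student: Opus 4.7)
The plan is to combine Theorem~\ref{thm:contractible} with a simple dimension count on the cells that remain after collapsing the repeated marking locus. Because $\rep$ is a contractible subcomplex of $\Dgn$, the quotient map $\Dgn \twoheadrightarrow \Dgn/\rep$ is a homotopy equivalence, so I would reduce the theorem to showing that $\Dgn/\rep$ is $(n-3)$-connected, and establish the latter by exhibiting a lower bound on the dimensions of its cells.

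First, I would verify that $\rep$ is honestly a closed subcomplex of the generalized $\Delta$-complex $\Dgn$. Any edge contraction of a graph with repeated markings still has repeated markings, so the closed cell associated to any repeating combinatorial type lies entirely inside $\rep$. Equivalently, $\rep = \bigcup_{\Gmw \text{ repeating}} \bar{c}(\Gmw)$, where $\bar{c}(\Gmw)$ denotes the closed cell of $\Gmw$ in $\Dgn$. Consequently $\Dgn/\rep$ inherits a CW structure with a single $0$-cell (the image of $\rep$) together with one cell of dimension $|E(G)|-1$ for each isomorphism class of non-repeating $\Gmw = (G,m,w)\in J_{g,n}$.

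Second, I would bound the dimensions of these remaining cells from below. If the marking $m\colon\{1,\ldots,n\}\to V(G)$ is injective then $|V(G)|\geq n$, and since $G$ is connected we obtain $|E(G)|\geq |V(G)|-1 \geq n-1$. Hence every cell of $\Dgn/\rep$ other than the basepoint has dimension at least $n-2$. The $(n-3)$-skeleton of $\Dgn/\rep$ therefore reduces to a single point, which forces $\pi_i(\Dgn/\rep) = 0$ for $0\leq i\leq n-3$. Combining with the homotopy equivalence $\Dgn\simeq\Dgn/\rep$ yields the theorem.

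The only point requiring care is the first step: one should check that the inclusion $\rep \hookrightarrow \Dgn$ is a cofibration, so that collapsing $\rep$ really gives a homotopy equivalence, and that the induced cell structure on $\Dgn/\rep$ is as described. Both of these are standard properties of the CW structure on the geometric realization of a generalized $\Delta$-complex (\S\ref{sec:generalized}), and so present no real obstacle.
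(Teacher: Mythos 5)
Your high-level strategy matches the paper's: use the contractibility of $\rep$ to reduce to $\Dgn/\rep$, then give a dimension bound forcing the cells below degree $n-2$ to collapse. The crucial gap is in your second paragraph, where you assert that $\Dgn/\rep$ ``inherits a CW structure with\ldots one cell of dimension $|E(G)|-1$ for each isomorphism class of non-repeating $\Gmw$.'' This is false, and \S\ref{sec:generalized} does not claim it. In a generalized $\Delta$-complex the building blocks are the quotients $\Delta^p/H_x$ attached along $\partial\Delta^p/H_x$, and when $H_x = \Aut(\Gmw)$ permutes edges nontrivially the quotient of the open simplex is not an open cell at all: the action fixes the entire diagonal locus $\{\ell(e)=\ell(\sigma(e))\}$, which meets the interior (e.g.\ at the barycenter), so the interior quotient is an orbifold with singular locus, and $\partial\Delta^p/H_x$ is typically not a $(p-1)$-sphere. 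Concretely, for $g\ge 2$ there are nonrepeating $\Gmw$ with nontrivial edge-permuting automorphisms (take a cycle on the $n$ labeled vertices together with an extra parallel edge); your claimed cell structure already breaks there. For $g=1$ all nonrepeating types are rigid, which is why the parallel argument succeeds in Theorem~\ref{thm:boundary}, but that rigidity is special to $g=1$.

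The paper's proof addresses exactly this obstruction. Instead of taking the raw quotient simplices as cells, it subdivides each $\sigma^1(\Gmw)$ into polytopal pieces along the ``identifiability'' equivalence relation on edges, chosen fine enough that (a) the pieces are permuted freely enough by $\Aut(\Gmw)$ to descend to a genuine CW decomposition of $\sigma^1(\Gmw)/\Aut(\Gmw)$ compatible with all face identifications, and (b) coarse enough that each nonbasepoint cell has dimension at least (number of identifiability classes)$-1$. The dimension count you performed ($|E(G)|\ge n-1$ from injectivity of $m$ via a spanning tree) is then sharpened to show there are at least $n-1$ identifiability classes, which is what actually feeds into the bound. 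Your final three sentences (cofibration, collapse is a homotopy equivalence) are fine and not where the difficulty lies; the work is in producing a valid CW structure on the quotient whose cell dimensions are controlled, and that step is missing from your argument.
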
 \addtocounter{theorem}{-1} }

\noindent Note that the bound given by Theorem~\ref{thm:connected} does not hold when $g$ is $0$ or $1$.  Indeed, Theorem~\ref{thm:boundary} shows that that $\Dn$ is exactly $(n-2)$-connected, for $n \geq 3$.  Similarly, $\Delta_{0,n}$ is exactly $(n-5)$-connected, for $n \geq 5$.  Rational homology vanishing suggests that $\Delta_{g,n}$ may be even more highly connected.  See Proposition \ref{prop:topg}.

Let $J^\mathrm{rep}_{g,n}$ denote the full subcategory of $J_{g,n}$ consisting of graphs $\Gmw\in J_{g,n}$ with repeated markings. 
Now for every $\Gmw=(G,m,w) \in J_{g,n}\setminus J^\mathrm{rep}_{g,n}$, define $S(\Gmw)\subseteq E(\Gmw)$ to be the set of edges $e\in E(\Gmw)$ satisfying:
\begin{itemize}
\item $e$ has distinct endpoints $x$ and $y$, i.e.~$e$ is not a loop, 
\item both  $x$ and $y$  are once-marked, and  $$w(x)=w(y)=0\text{ and } \val_G(x)=\val_G(y)=2;$$ 
\item no edge is parallel to $e$.
\end{itemize}
Here and below, $\val_G(x)$ denotes the number of half-edges incident to $x$ in the underlying ummarked graph $G$.  It should not be confused with $\val(x)$, introduced in Section 2, which counts the number of half-edges plus the number of marked points at $x$ in the weighted, marked graph $\Gmw$.

\begin{lemma}\label{lem:n-5g+5} 
Let $g>1$. Then for any $\Gmw\in J_{g,n}\setminus J^\mathrm{rep}_{g,n}$, we have:
\begin{enumerate}[(i)]
\item For every $\rho\in \operatorname{Aut}(\Gmw)$ and $e\in S(\Gmw)$, we have $\rho\cdot e = e$.
\item If $e,f\in E(\Gmw)\setminus S(\Gmw)$ are distinct, and $\Gmw/e\not\in J^\mathrm{rep}_{g,n}$, then $\overline{f}\not \in S(\Gmw/e)$, where $\overline{f}$ denotes the image of $f$ in $\Gmw/e$.
\item $|E(\Gmw)|> |S(\Gmw)|\ge n-5g+5.$
\end{enumerate}
\end{lemma}
\begin{proof}
Statement (i) follows from the fact that $e$ is the only edge with the marked vertices $x$ and $y$ as its endpoints.  For statement (ii), we have that $f\not \in S(\Gmw),$ which is equivalent to at least one of the following conditions holding: 
\begin{enumerate}
\item $f$ is a loop, 
\item $f=xy$ is a nonloop with a parallel edge $f'$,
\item $f = xy$ is a nonloop with $w(x)>0$ or $w(y)>0$, or 
\item $f = xy$ is a nonloop with $\val_G(x)\ge 3$ or $\val_G\ge 3$.
\end{enumerate}
Moreover, if (1), (3), or (4) holds for $f$ then (1), (3), or (4) holds for $\ov{f}\in E(\Gmw/e)$ as well. If (2) holds for $f$ then (1) or (2) holds for $\ov{f}\in E(\Gmw/e)$ as well. So we conclude that $\ov{f}\not\in S(\Gmw/e)$.

For statement (iii), fix $g>1$ and fix $\Gmw=(G,m,w) \in J_{g,n}\setminus J^\mathrm{rep}_{g,n}$. First note that $S(\Gmw)\subsetneq E(\Gmw)$, since if $S(\Gmw)=E(\Gmw)$ then every vertex of $\Gmw$ would be 2-valent, once-marked and weight zero, which is impossible for $g>1$.  Next, since $\Gmw\not\in J^\mathrm{rep}_{g,n}$, we have that $m$ is injective.  Thus $\Gmw$ is obtained from some $\Gmw_0\in J_{g,0}$ by adding $n$ markings at distinct points, either on vertices of $\Gmw_0$ or interiors of edges of $\Gmw_0$. More formally, the latter operation is regarded as an introduction of a new weight zero vertex supporting that marking. At most $2g-2$ markings appear on vertices of $\Gmw_0$, so at least $n-2g+2$ markings appear on edges of $\Gmw_0$. Let $n_1,\ldots,n_k>0$ be the collection of positive numbers of marked points added on edges.  Note $k\le |E(\Gmw_0)|\le 3g-3$. Then we have that $|S(\Gmw)| = \sum_{i=1}^{k} (n_i-1) \ge (n-2g+2)-(3g-3) = n-5g+5.$
\end{proof}
 
Before proving Theorem~\ref{thm:connected}, we fix notation for simplices and their barycentric subdivisions.  
First, for $E$ any finite set, write 
$$\Delta^E = \{\ell\colon E\rightarrow \R_{\ge 0}~|~\sum \ell(e)=1\} \subset \R^E.$$
Now given any subset $Z\subseteq E$ we may regard $\Delta^Z $ as a face of $\Delta^E$. 
Write $Z^c = E\setminus Z$.
Given $\Delta_1\subseteq \Delta^Z$ and $\Delta_2 \subseteq \Delta^{Z^c}$, we write $\Delta_1\ast \Delta_2 = \operatorname{conv}(\Delta_1,\Delta_2)\subseteq \Delta^E$ for the convex hull; note that $\Delta_1\ast \Delta_2$ is naturally identified with the join of $\Delta_1$ and $\Delta_2$.  For example, $\Delta^E = \Delta^Z \ast \Delta^{Z^c}$.  For any simplex $\Delta$, write $\operatorname{Vert}(\Delta)$ for the vertex set of $\Delta$, and for $v\in\operatorname{Vert}(\Delta)$, write $\Delta_v$ for the unique facet of $\Delta$ not containing $v$.

For any simplex $\Delta$, let $\Delta^\circ$ denote the interior of $\Delta$ and let $\partial\Delta$ denote the boundary of $\Delta$.  If $\dim\Delta=0$, we take $\Delta^\circ = \Delta$ and $\partial\Delta=\emptyset$.  Let $\operatorname{Bar}(\Delta)$ denote the barycentric subdivision of $\Delta$. Thus $\operatorname{Bar}(\Delta)$ is a collection of closed simplices forming a polyhedral complex supported on $\Delta$.  Let $c(\Delta)$ denote the central vertex of $\operatorname{Bar}(\Delta)$; that is, $c(\Delta)$ is the unique simplex of $\operatorname{Bar}(\Delta)$ which does not meet $\partial\Delta$.  

Let $\operatorname{Bar}^\circ(\Delta)$ denote the subcollection of $\operatorname{Bar}(\Delta)$ consisting of those simplices containing $c(\Delta)$ as a vertex. Note that the members of $\operatorname{Bar}^\circ(\Delta)$ are still closed simplices, but $\operatorname{Bar}^\circ(\Delta)$ is no longer a polyhedral complex, since not all faces of simplices in $\operatorname{Bar}^\circ(\Delta)$ are in $\operatorname{Bar}^\circ(\Delta)$.  Then we observe the equality of sets 
\begin{equation}\label{eq:deltaint}
\coprod_{\Delta'\in \operatorname{Bar}^\circ(\Delta)}  \!\! (\Delta')^\circ = \Delta^\circ.
\end{equation}
 
\begin{proof}[Proof of Theorem~\ref{thm:connected}] For any positive $g$, we note that $\Delta_{g,n}$ is homotopy equivalent to $\Delta_{g,n}/\Delta_{g,n}^\mathrm{rep}$, using Theorem~\ref{thm:contractible}.  We will give $\Delta_{g,n}/\Delta_{g,n}^\mathrm{rep}$ a CW-complex structure with a single $0$-cell and all of whose positive dimensional cells have dimension at least $n-5g+5$, which is enough to prove the theorem.

We fix a single $0$-cell denoted $\bullet_{g,n}^\mathrm{rep}$ to be the image of $\Delta_{g,n}^\mathrm{rep}$ in $\Delta_{g,n}/\Delta_{g,n}^\mathrm{rep}$.  Now, let $\Gmw \in J_{g,n}\setminus J_{g,n}^\mathrm{rep}$.  Recall the definition of $S(\Gmw)$ preceding Lemma~\ref{lem:n-5g+5}; we will write $S=S(\Gmw)$ and $T = E(\Gmw)\setminus S$. Note that $T\ne\emptyset $ is guaranteed by Lemma~\ref{lem:n-5g+5}(3). Let $\Sigma(\Gmw)$ be the collection of simplices 
$$\Sigma(\Gmw)=\{\Delta^S\} \ast \operatorname{Bar}^\circ(\Delta^{T}) = \{\Delta^S\ast \Delta': \Delta'\in  \operatorname{Bar}^\circ(\Delta^{T})\}.$$  
Notice every simplex of $\Sigma(\Gmw)$ sits inside $\Delta^{E(\Gmw)}$.  We also record for future use the equality of sets
\begin{equation}\label{eq:interior}
(\Delta^{E(\Gmw)})^\circ = \coprod_{\Delta''\in \Sigma(\Gmw)} (\Delta'')^\circ.
\end{equation}

Now $\Aut(\Gmw)$ naturally acts on the points of $\Delta^{E(\Gmw)}$; write $\cdot_\Delta$ for this action.  We now claim the following.

\begin{claim}\label{claim:pointwise}
The action $\cdot_\Delta$ of $\Aut(\Gmw)$ on the points of  $\Delta^{E(\Gmw)}$ induces an action of $\Aut(\Gmw)$ on the finite set $\Sigma(\Gmw)$, which we denote $\cdot_\Sigma$.  Then if $\rho \cdot_\Sigma \Delta' = \Delta'$ for some $\rho \in \Aut(\Gmw)$ and some $\Delta'\in \Sigma(\Gmw)$, then $\rho\in\Aut(\Gmw) \cdot_\Delta p = p$  for each point $p\in \Delta'$.  In other words, if $\rho$ fixes a simplex $\Delta'$ setwise, then $\rho$ fixes $\Delta'$  pointwise.
\end{claim}
\begin{proof}[Proof of Claim~\ref{claim:pointwise}]
By Lemma~\ref{lem:n-5g+5}(1), the action $\cdot_\Delta$ of $\Aut(\Gmw)$ on the points of $\Delta^{E(\Gmw)}$
restricts to two actions on $\Delta^S$ and $\Delta^T$. The first action is trivial, while the second induces an action of $\Aut(\Gmw)$ on the finite set $\operatorname{Bar}^\circ(\Delta^T)$.  This latter action has the property that if $\rho\in\Aut(\Gmw)$ and $\rho\cdot\Delta'=\Delta'$ for $\Delta'\in \operatorname{Bar}^\circ(\Delta^T)$, then $\rho$ fixes $\Delta'$ pointwise under $\cdot_\Delta$, by properties of the barycentric subdivision.  Indeed, given a simplex in $\operatorname{Bar}^\circ(\Delta^T)$, every vertex lies in the interior of a unique face of $\Delta^T$ and all of those faces have dimensions that are different from each other.  Together these facts imply the claim.  
\end{proof}

Now let $\{\Delta^\Gmw_\alpha\}$ be a set of $\Aut(\Gmw)$-orbit representatives of $\Sigma(\Gmw)$.  For an element $\Delta^\Gmw_\alpha$ we let $\phi^\Gmw_\alpha$ be the composition of the natural maps
$$\Delta^\Gmw_\alpha \hookrightarrow \Delta^{E(\Gmw)} \rightarrow \Delta_{g,n}\rightarrow \Delta_{g,n}/\Delta_{g,n}^\mathrm{rep}.$$
Let $e^\Gmw_\alpha = \phi((\Delta^\Gmw_\alpha)^\circ).$  The following claim will finish the proof of Theorem~\ref{thm:connected}.

\begin{claim}\label{claim:cw}
As $\Gmw$ ranges over $J_{g,n}\setminus J_{g,n}^\mathrm{rep}$ and $\Delta^\Gmw_\alpha$ ranges over a set of $\Aut(\Gmw)$-orbit representatives for $\Sigma(\Gmw)$, the spaces $e^\Gmw_\alpha$, taken together with the $0$-cell $\bullet_{g,n}^\mathrm{rep}$, give a CW structure on $\Dgn/\Dgn^\mathrm{rep}$, with characteristic maps $\phi^\Gmw_\alpha\colon \Delta^\Gmw_\alpha \rightarrow \Dgn/\Dgn^\mathrm{rep}$, such that every cell except for $\bullet_{g,n}^\mathrm{rep}$ has dimension at least $n-5g+5$.
\end{claim}

\begin{proof}[Proof of Claim~\ref{claim:cw}]  We verify the following statements:
\begin{enumerate}[(i)]
\item For each $\Gmw\in J_{g,n}\setminus J_{g,n}^\mathrm{rep}$, each $\phi^\Gmw_\alpha|_{(\Delta^\Gmw_\alpha)^\circ}$ is a homeomorphism $(\Delta^\Gmw_\alpha)^\circ\xrightarrow{\cong} e^\Gmw_\alpha$, and $\dim e^\Gmw_\alpha \ge n-5g+5$.
\item $\Dgn/\Dgn^\mathrm{rep}=\{\bullet_{g,n}^\mathrm{rep}\}\amalg \coprod e^\Gmw_\alpha$ as sets.
\item Writing $(\Dgn/\Dgn^\mathrm{rep})^{(k)}$ for the union in $\Dgn/\Dgn^\mathrm{rep}$ of all cells of dimension at most $k$, 
we have $$\phi^\Gmw_\alpha(\partial \Delta^\Gmw_\alpha)\subseteq (\Dgn/\Dgn^\mathrm{rep})^{(\dim \Delta^\Gmw_\alpha - 1)}.$$
\end{enumerate}

{\em Proof of (i).}  Given $\Gmw\in J_{g,n}\setminus J_{g,n}^\mathrm{rep}$ and given any $\Delta^\Gmw_\alpha$, the map $\phi^\Gmw_\alpha\colon \Delta^\Gmw_\alpha\rightarrow\Dgn$ admits a quotient  factorization through 
\begin{equation}\label{eq:deltasim}
\Delta^\Gmw_\alpha/\!\sim\,\longrightarrow\Dgn/\Dgn^\mathrm{rep},
\end{equation}
where $\ell\sim\ell'$ if and only if $\phi^\Gmw_\alpha(\ell) = \phi^\Gmw_\alpha(\ell')$.  The map~\eqref{eq:deltasim} is a homeomorphism onto its image, being continuous from a compact space to a Hausdorff space.  To prove (i), it remains to verify that $\sim$ is trivial on $(\Delta^\Gmw_\alpha)^\circ$.  But this was verified in Claim~\ref{claim:pointwise}.  Indeed, if $\ell,\ell'\in (\Delta^\Gmw_\alpha)^\circ$, then $\ell\sim\ell'$ if and only if there exists $\rho\in\Aut(\Gmw)$ with $\rho(\ell)=\ell'$.  In particular by the disjointness of the sets in~\eqref{eq:deltaint}, $\rho$ sends $\Delta^\Gmw_\alpha$ to $\Delta^\Gmw_\alpha$, hence fixes it pointwise by Claim~\ref{claim:pointwise}.  Finally, note that $(\Delta^\Gmw_\alpha)$ is a simplex with at least $|S|+1$ vertices, so $\dim e^\Gmw_\alpha \ge |S| \ge n-5g+5.$

{\em Proof of (ii).} We want to show that the natural map
$$\coprod_{\Gmw\in J_{g,n}\setminus J_{g,n}^\mathrm{rep}}\left(\coprod_\alpha (\Delta^\Gmw_\alpha)^\circ \right)\longrightarrow (\Dgn/\Dgn^\mathrm{rep})-\{\bullet_{g,n}^\mathrm{rep}\}$$ 
yields an equality of sets.  It suffices to show that for each $\Gmw\in J_{g,n}\setminus J_{g,n}^\mathrm{rep}$, the natural map $$\coprod_\alpha (\Delta^\Gmw_\alpha)^\circ \rightarrow (\Delta^{E(\Gmw)})^\circ/\Aut(\Gmw)$$ 
yields an equality of sets.  But this follows from Equation~\eqref{eq:interior} and the fact that the $\{\Delta^\Gmw_\alpha\}$ was a choice of $\Aut(\Gmw)$-orbit representatives for $\Sigma(\Gmw)$. 

{\em Proof of (iii).} We proceed by induction on $\dim \Delta^\Gmw_\alpha$.  Given $\Gmw\in J_{g,n}\setminus J_{g,n}^\mathrm{rep}$ and given any $\Delta^\Gmw_\alpha$, it suffices to check that for every vertex $v$ of $\Delta^\Gmw_\alpha$, we have
\begin{equation}\label{eq:facetdims}
\phi^\Gmw_\alpha((\Delta^\Gmw_\alpha)_v) \subseteq (\Dgn/\Dgn^\mathrm{rep})^{(\dim \Delta^\Gmw_\alpha - 1)}.
\end{equation} Recall that $(\Delta^\Gmw_\alpha)_v$ denotes the unique facet of $\Delta^\Gmw_\alpha$ not containing $v$.

Recall the definitions of $S=S(\Gmw)$ and $T=E(\Gmw)\setminus S(\Gmw)$.  Recall that Lemma~\ref{lem:n-5g+5}(3) implies $T\ne \emptyset$.  Write $\Delta^\Gmw_\alpha = \Delta^S \ast \Delta'$ for $\Delta'\in \operatorname{Bar}^\circ(\Delta^T)$.  Then $\dim \Delta^\Gmw_\alpha = |S|+\dim \Delta'$. Let $\Delta'' = \Delta'_{c({\Delta^T})}$.  
Note that 
\begin{equation}\label{eq:vertexcases}
\operatorname{Vert}(\Delta^\Gmw_\alpha) = \operatorname{Vert}(\Delta^S) \amalg \operatorname{Vert}(\Delta'')\amalg \{c({\Delta^T})\}.
\end{equation}

Let $v\in \operatorname{Vert}(\Delta^\Gmw_\alpha)$.  We check the claim in~\eqref{eq:facetdims} in each of the three cases~\eqref{eq:vertexcases}.   First, if $v\in\operatorname{Vert}(\Delta^S)$ then we have $\phi^\Gmw_\alpha((\Delta^\Gmw_\alpha)_v) = \{\bullet_{g,n}^{\mathrm{rep}}\}$.  This is because for any $e\in S(\Gmw)$, we have $\Gmw/e\in J_{g,n}^\mathrm{rep}.$

Second, if $v\in\operatorname{Vert}(\Delta'')$ then $(\Delta^\Gmw_\alpha)_v=\Delta^S\ast \Delta'_v$ is $\Aut(\Gmw)$-equivalent to some $\Delta^\Gmw_{\alpha'}$.  Clearly $\dim \Delta^\Gmw_{\alpha'} < \dim \Delta^\Gmw_{\alpha}$, so $$\phi^\Gmw_\alpha((\Delta^\Gmw_{\alpha})_v) \subseteq \Delta_{g,n}^{(\dim \Delta^\Gmw_{\alpha}-1)}$$ as desired.

Finally, suppose $v = c({\Delta^T})$.  
Let $T''\subseteq T$ be such that $\Delta''\in \operatorname{Bar}^\circ(\Delta^{T''})$; note $T''$ is defined uniquely by this property.  (If $\Delta''=\emptyset$ then we take $T''=\emptyset$.)  Note also that $T''\ne T$.  Let $T'=T\setminus T''$, so $T'\ne\emptyset$.
 Let $\Gmw' = \Gmw/T'$; since $T'\ne\emptyset$, we regard $\Delta^{E(\Gmw')}$ as a proper face of $\Delta^{E(\Gmw)}$.  
 Now if $\Gmw'\in J_{g,n}^\mathrm{rep}$, then we have $\phi^\Gmw_\alpha((\Delta^\Gmw_{\alpha})_v) =\{\bullet_{g,n}^\mathrm{rep}\}$ and the claim is proved. Otherwise, let $S'=S(\Gmw')$.  By Lemma~\ref{lem:n-5g+5}(2), we may regard $S'$ as a subset of $S$.  Write $S''=S\setminus S'$.  So $E(\Gmw')\setminus S' = S''\cup T''$.  

We have
\begin{eqnarray*}
(\Delta^\Gmw_\alpha)_v &=& \Delta^S\ast \Delta'' \\
&=& \Delta^{S'}\ast(\Delta^{S''}\ast\Delta'').
\end{eqnarray*}
Now  $\Delta^{S''}\ast\Delta''\subset \Delta^{S''\cup T''}$ is a simplex, of dimension $\dim\Delta''+|S''|$.  Moreover, since $\Delta''\in\operatorname{Bar}^\circ(\Delta^{T''})$, it follows that $\Delta^{S''}\ast\Delta''\subset \Delta^{S''\cup T''}$ is a union of cells in $\operatorname{Bar}^\circ(\Delta^{S''\cup T''})$, each of dimension at most $\dim\Delta''+|S''|$.  Recalling that $S''\cup T''=E(\Gmw')\setminus S'$, it follows that $\Delta^{S'}\ast(\Delta^{S''}\ast\Delta'')$ is a union of a subset of cells $\Sigma'\subseteq \Sigma(\Gmw')$, each cell in $\Sigma'$ having dimension at most $\dim\Delta''+|S''|+|S'|=\dim\Delta''+|S|$.  Moreover, as $\Delta^{\Gmw'}_{\alpha'}$ runs over a set of $\Aut(\Gmw)$-orbit representatives of the cells in $\Sigma'$, we have $\dim \Delta^{\Gmw'}_{\alpha'}< \dim \Delta^\Gmw_\alpha$, so by induction $\phi^{\Gmw'}_{\alpha'}(\Delta^{\Gmw'}_{\alpha'}) \subseteq (\Dgn/\Dgn^\mathrm{rep})^{(\dim\Delta''+|S|)}.$  Finally, we have $\dim\Delta''+|S| = \dim\Delta'+|S|-1 = \dim\Delta^\Gmw_\alpha -1$ so statement (iii) is proved. 
\end{proof}

Having proved Claim~\ref{claim:cw}, the proof of Theorem~\ref{thm:connected} is complete.
\end{proof}

We note the following immediate consequence of Theorem~\ref{thm:connected}.

\begin{corollary} For $g>1$, the reduced homology $\widetilde H_k (\Dgn;\Z)$ vanishes for $k\le n-5g+4$.
\end{corollary}

\noindent Thus $\widetilde H_k (\Dgn;\Z)$ is concentrated in the top $8g-9$ degrees.  With rational coefficients, vanishing in an even wider range can be seen using Harer's computation of the virtual cohomological dimension of $\cM_{g,n}$ from \cite{Harer86}, as follows.

\begin{proposition} \label{prop:topg} The reduced rational homology of $\Dgn$ is supported in the top $g - \delta_{0,n}$ degrees, for $g \geq 1$.
\end{proposition}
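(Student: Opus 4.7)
The plan is to combine two ingredients. First, Proposition~\ref{prop:topweight} applied to $\cM_{g,n} \subset \ocM_{g,n}$ (which is admissible because $\ocM_{g,n}$ is a smooth and proper DM stack whose boundary divisor has normal crossings, and $\Delta(\ocM_{g,n}\setminus\cM_{g,n}) = \Dgn$ by Corollary~\ref{cor_dual_complex}) gives the isomorphism
\[
\widetilde H_{i-1}(\Dgn; \Q) \cong \Gr^W_{2d} H^{2d-i}(\cM_{g,n}; \Q),
\]
where $d = \dim_{\C} \cM_{g,n} = 3g - 3 + n$. The right-hand side is a subquotient of $H^{2d-i}(\cM_{g,n};\Q)$, so it vanishes whenever $2d - i$ exceeds the virtual cohomological dimension $\mathrm{vcd}(\cM_{g,n})$. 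Second, I would invoke Harer's theorem \cite{Harer86}, in the form $\mathrm{vcd}(\cM_{g,n}) = 4g - 4 + n$ for $g \geq 1$, $n \geq 1$, and $\mathrm{vcd}(\cM_g) = 4g - 5$ for $g \geq 2$, $n = 0$.

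Substituting these bounds into the inequality $2d - i > \mathrm{vcd}$ and passing to the homological degree $j = i - 1$ yields, for $n \geq 1$, that $\widetilde H_j(\Dgn;\Q) = 0$ for $j \leq 2g - 4 + n$, and for $n = 0$ with $g \geq 2$, that $\widetilde H_j(\Dgn;\Q) = 0$ for $j \leq 2g - 3$. Thus the possible nonvanishing reduced rational homology is concentrated in degrees $2g - 3 + n, \ldots, 3g - 4 + n = \dim \Dgn$ when $n \geq 1$, respectively $2g - 2, \ldots, 3g - 4$ when $n = 0$. These are exactly the top $g$, respectively top $g - 1$, degrees, a range uniformly described as the top $g - \delta_{0,n}$ degrees.

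Since this is a direct assembly of two standard inputs, the only real work is arithmetic bookkeeping and verifying the inputs apply; there is no genuine obstacle. The edge case $g = 1$, $n = 0$ is excluded by the convention $2g - 2 + n > 0$ needed for $\Dgn$ to be defined, and one quickly checks consistency with Theorem~\ref{thm:boundary}: for $g = 1$ and $n \geq 1$, the bound places possible nonvanishing reduced homology only in degree $n - 1 = \dim \Delta_{1,n}$, matching the wedge-of-spheres conclusion there.
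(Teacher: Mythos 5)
Your proposal is correct and matches the paper's proof: both use Proposition~\ref{prop:topweight} (via Corollary~\ref{cor_dual_complex}) to translate the question into a bound on top weight cohomology of $\cM_{g,n}$, and then apply Harer's computation of the virtual cohomological dimension, $\mathrm{vcd}(\cM_{g,n}) = 4g-4+n-\delta_{0,n}$, to get the vanishing range. The only cosmetic difference is that you split the $n\geq 1$ and $n=0$ cases explicitly and supply the upper degree bound from $\dim\Dgn = 3g-4+n$, whereas the paper packages the cases with a Kronecker delta and notes the lower cohomological degree $3g-3+n$ via the fact that the weight $2d$ can only occur in $H^k$ for $k\geq d$; these are equivalent observations.
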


\begin{proof}
Suppose $g \geq 1$.  By \cite{Harer86}, the virtual cohomological dimension of $\cM_{g,n}$ is $4g - 4 + n - \delta_{0,n}$, where $\delta_{ij}$ is the Kronecker delta function.  Therefore, the top weight cohomology of $\cM_{g,n}$ is supported in degrees $\{ 3g - 3 + n, \ldots, 4g - 4 + n - \delta_{0,n} \}$, and hence $\widetilde H_k(\Dgn, \Q)$ vanishes unless $2g - 3 + n + \delta_{0,n} \leq k \leq  3g - 4 + n$.
\end{proof}

Now we verify Theorem~\ref{thm:cohomology}, which gives the top weight rational cohomology of $\cM_{1,n}$ as an $S_n$-representation.  The result is:

{ \renewcommand{\thetheorem}{\ref{thm:cohomology}}
\begin{theorem} For each $n\ge 1$, the top weight cohomology of $\cM_{1,n}$ is
 \[ \Gr_{2n}^W H^i(\cM_{1,n}; \Q) \cong \left \{ \begin{array}{ll} \Q^{(n-1)!/2}
 & \mbox{ \ \ for $n \geq 3$ and $i = n$,} \\
 												0 & \mbox{ \ \ otherwise.} \end{array} \right.
 \] Moreover, for each $n\ge 3$, the representation of $S_n$ on $\Gr^W_{2n} H^n(\mathcal{M}_{1,n}; \Q)$ induced by permuting marked points is
 $$\mathrm{Ind}_{D_n,\phi}^{S_n} \, \mathrm{Res}^{S_n}_{D_n,\psi} \, \mathrm{sgn}.$$ 
 \end{theorem}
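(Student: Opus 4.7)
The plan is to deduce both assertions from Proposition~\ref{prop:topweight} together with Theorem~\ref{thm:boundary} and Theorem~\ref{thm:contractible}. First I would apply Proposition~\ref{prop:topweight} with $\cX = \cM_{1,n}$, noting that $d = \dim_{\C} \cM_{1,n} = n$, and use Corollary~\ref{cor_dual_complex} to identify the dual complex of the boundary with $\Dn$, obtaining
\[
\Gr_{2n}^W H^{i}(\cM_{1,n}; \Q) \cong \widetilde H_{2n-i-1}(\Dn; \Q).
\]
By Theorem~\ref{thm:boundary} the right-hand side vanishes unless $i = n$ (and $n \geq 3$), in which case it is $\Q^{(n-1)!/2}$. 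This yields the first statement.

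For the representation-theoretic claim, the key point is that the isomorphism from Proposition~\ref{prop:topweight} is natural and hence $S_n$-equivariant, so it suffices to compute $\widetilde H_{n-1}(\Dn; \Q)$ as an $S_n$-representation. Since the repeated marking subcomplex $\rep \subset \Dn$ is $S_n$-invariant and contractible (Theorem~\ref{thm:contractible}), the quotient map $\Dn \to \Dn/\rep$ is an $S_n$-equivariant homotopy equivalence onto a wedge of $(n-1)!/2$ spheres of dimension $n-1$. The top homology is therefore the top cellular chain group, and it is enough to describe the $S_n$-action there.

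The top cells of $\Dn \smallsetminus \rep$ are parametrized, as in the proof of Theorem~\ref{thm:boundary}, by the combinatorial types $\Gmw_\tau$ given by an $n$-cycle with a single marked point at each vertex, one for each cyclic ordering $\tau$ of $\{1,\dots,n\}$. The $S_n$-action on these types is transitive, and the stabilizer of the standard cyclic ordering is exactly the dihedral subgroup $D_n \subset S_n$ under the vertex embedding $\phi$. Each $\Gmw_\tau$ has trivial automorphism group in $J_{1,n}$, so the corresponding top cell is a genuine $(n-1)$-simplex whose vertices are indexed by the $n$ edges of the cycle. An element $g \in D_n$ stabilizing $\Gmw_\tau$ acts on this simplex by the permutation $\psi(g)$ of its vertex set, multiplying the orientation class by $\mathrm{sgn}(\psi(g))$. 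Assembling these contributions and applying Frobenius reciprocity gives
\[
\widetilde H_{n-1}(\Dn; \Q) \cong \mathrm{Ind}_{D_n,\phi}^{S_n}\,\mathrm{Res}^{S_n}_{D_n,\psi}\,\mathrm{sgn},
\]
as desired.

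The main point to be careful about is the orientation sign in the last step; this is what distinguishes the two homomorphisms $\phi$ and $\psi$ and is responsible for the nontrivial twist appearing in the formula. Everything else is bookkeeping once the contractibility of $\rep$ lets us replace $\Dn$ by a wedge of top-dimensional spheres with cells that are free $S_n$-orbits of simplices outside the dihedral stabilizers.
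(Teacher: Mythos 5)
Your argument is correct and follows essentially the same route as the paper: reduce to the $S_n$-action on $\widetilde H_{n-1}(\Delta_{1,n};\Q)$ via the equivariant identification with $\Gr^W_{2n} H^n(\cM_{1,n};\Q)$ coming from Proposition~\ref{prop:topweight} and Corollary~\ref{cor_dual_complex}, use Theorems~\ref{thm:contractible} and~\ref{thm:boundary} to replace $\Delta_{1,n}$ by the wedge of $(n-1)!/2$ spheres indexed by cyclic orderings, and then read off the induced representation from the transitive $S_n$-action on these cells with stabilizer $D_n$ (via $\phi$) acting on each orientation line by $\mathrm{sgn}\circ\psi$. One small terminological nit: the final step is not an application of Frobenius reciprocity but simply the structural recognition of an induced representation from a transitive action whose isotropy acts on a one-dimensional weight space by a character.
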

  \noindent Here $\phi\colon D_n \rightarrow S_n$ is the dihedral group of order $2n$ acting on the vertices of an $n$-gon, $\psi\colon D_n \rightarrow S_n$ is the corresponding embedding into the permutation group on the {\em edges} of the $n$-gon, and $\mathrm{sgn}$ denotes the sign representation of $S_n$.
\addtocounter{theorem}{-1} }

\begin{proof} The first statement follows from combining Theorem~\ref{thm:boundary}, Corollary~\ref{cor_dual_complex}, and Proposition~\ref{prop:topweight}.
Now let $n\ge 3$.  It suffices to identify the representation of $S_n$ on the vector space
$$V = H_{n-1}(\Dn/\Delta_{1,n}^{\mathrm{rep}}; \Q)$$
obtained by permuting the marked points.  Indeed, the compactification $\mathcal{M}_{g,n} \subset \overline{\mathcal{M}}_{g,n}$ and the identification of $\Delta_{g,n}$ with the dual complex of the Deligne--Mumford boundary divisor $\Delta(\ocM_{g,n}\smallsetminus \cM_{g,n})$ are certainly equivariant under permuting marked points, as is the contraction of $\Dgn^{\mathrm{rep}}$ studied in Theorem~\ref{thm:contractible}.  Recall that $V$ is the $\Q$-vector space spanned by the homology classes of the $(n-1)$-spheres in the wedge $\Dn/\Delta_{1,n}^{\mathrm{rep}}$, which are in bijection with cyclic orderings of $\{1,\ldots,n\}$, by Theorem~\ref{thm:boundary}.

Let $\phi\colon D_n \rightarrow S_n$ be the embedding of the dihedral group as a subgroup of the permutations of the vertices $\{1,\ldots,n\}$ of an $n$-cycle. Choose left coset representatives $\sigma_1,\ldots,\sigma_k$, where $k={(n-1)!/2}$, and write $[\sigma_i]$ for the corresponding basis elements of $V$.  For any $\pi \in S_n$, we have $\pi \sigma_i = \sigma_j \pi'$ for some $\pi'\in D_n$. Then $\pi\cdot[\sigma_i] = \pm [\sigma_j]$, where the sign depends exactly on the sign of the permutation on the {\em edges} of the $n$-cycle induced by $\pi'$. This is because the ordering of the edges determines the orientation of the corresponding sphere in $\Dn/\Delta_{1,n}^{\mathrm{rep}}$.  Therefore the representation of $S_n$ on $V$ is exactly $\mathrm{Ind}_{D_n,\phi}^{S_n}\mathrm{Res}_{D_n,\psi}^{S_n} \mathrm{sgn}$, where the restriction is according to the embedding of $\psi\colon D_n\rightarrow S_n$ into the group of permutations of edges of the $n$-cycle.
\end{proof}

We conclude with the proof of Theorem~\ref{thm:dualbasis}, giving an explicit dual basis of torus classes to the top-weight cohomology of $\cM_{1,n}$.

{ \renewcommand{\thetheorem}{\ref{thm:dualbasis}}
\begin{theorem} For each $n\ge 3$, the classes $[\mathfrak{t}_\Gmw] \in H_n(\cM_{1,n}; \Q)$ indexed by cycles with $n$ vertices labeled by distinct elements of $\{1, \ldots, n\}$ form a dual basis to $\Gr_{2n}^W H^n(\cM_{1,n}; \Q)$.
 \end{theorem}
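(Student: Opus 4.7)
The plan is to combine the isomorphism $\Gr_{2n}^W H^n(\cM_{1,n};\Q) \cong \widetilde H_{n-1}(\Dn;\Q)$ coming from Proposition~\ref{prop:topweight} and Corollary~\ref{cor_dual_complex} with an explicit cellular basis of $\widetilde H_{n-1}(\Dn;\Q)$, and then to verify that the torus classes $[\mathfrak t_\Gmw]$ pair with the transported basis via the Kronecker delta, using the residue integral formula of Section~\ref{sec:torus}.

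For the basis of $\widetilde H_{n-1}(\Dn;\Q)$, I will extract what is already implicit in the proof of Theorem~\ref{thm:boundary}. That argument shows that the top-dimensional cells of $\Dn$ lying outside the repeated marking locus $\rep$ are precisely the $(n-1)!/2$ marked $n$-cycles $\Gmw_\tau$, and that $\Dn/\rep$ is a wedge of $(n-1)!/2$ spheres of dimension $n-1$, one for each $\Gmw_\tau$. Since for $n\geq 3$ each $\Gmw_\tau$ has trivial automorphism group, Lemma~\ref{lem:cellularbasis} gives that the cellular classes $[\Gmw_\tau]\in C_{n-1}(\Dn;\Q)$ form part of a basis; combined with the contractibility of $\rep$ (Theorem~\ref{thm:contractible}), they descend to a basis $\{v_\tau\}$ of $\widetilde H_{n-1}(\Dn;\Q)$, and hence yield a basis $\{\bar v_\tau\}$ of $\Gr_{2n}^W H^n(\cM_{1,n};\Q)$ under Proposition~\ref{prop:topweight}.

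To verify the pairing, observe that the cellular cochains $\delta_{\Gmw_\tau}\in C^{n-1}(\Dn;\Q)$ dual to $[\Gmw_\tau]$ are precisely the cochains that enter the construction of the torus classes $[\mathfrak t_{\Gmw_\tau}]$ in Section~\ref{sec:torus}. As noted there, each $[\mathfrak t_{\Gmw_\tau}]$ annihilates $W_{2n-1}H^n(\cM_{1,n};\Q)$ and so induces a functional on $\Gr_{2n}^W H^n(\cM_{1,n};\Q)$. For any logarithmic representative $\omega\in\Gamma(\ocM_{1,n},W_n\cA^n_{\ocM_{1,n}}(\log\cD))$ of a top weight class, the identity at the end of Section~\ref{sec:torus} yields
\[
\bigl\langle[\mathfrak t_{\Gmw_\tau}],[\omega]\bigr\rangle \;=\; \frac{1}{(2\pi\sqrt{-1})^n}\int_{T_{\Gmw_\tau,\epsilon}}\omega \;=\; \Bigl\langle \delta_{\Gmw_\tau},\textstyle\int_{\widetilde\cD^{(n)}}\res\omega\Bigr\rangle.
\]
The proof of Proposition~\ref{prop:topweight} (given in the appendix) identifies $\Gr_{2n}^W H^n(\cM_{1,n};\Q)$ with rational cellular $(n-1)$-homology of $\Dn$ through this very residue map $[\omega]\mapsto \int_{\widetilde\cD^{(n)}}\res\omega$, so the right-hand side is the ordinary cellular Kronecker pairing $\langle \delta_{\Gmw_\tau}, v_{\tau'}\rangle = \delta_{\tau,\tau'}$ evaluated on the image of $[\omega]$. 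Thus $[\mathfrak t_{\Gmw_\tau}]$ is dual to $\bar v_{\tau'}$.

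The main obstacle is ensuring that the abstract isomorphism of Proposition~\ref{prop:topweight}, built in the appendix via the weight spectral sequence and Poincar\'e residues for a DM stack with normal (but not necessarily simple normal) crossings boundary, really is given by this explicit residue integral; this compatibility is the content of Proposition~\ref{prop:cyclesandboundaries} and Corollary~\ref{cor:relations}, and the care needed here comes from simultaneously handling DM stacks and non-simple normal crossings. A dimension count using Theorem~\ref{thm:cohomology}, which gives $\dim\Gr_{2n}^W H^n(\cM_{1,n};\Q) = (n-1)!/2$, then confirms that these $(n-1)!/2$ torus classes exhaust the full dual basis.
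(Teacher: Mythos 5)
Your proposal is correct and takes essentially the same route as the paper: identify $\Gr_{2n}^W H^n(\cM_{1,n};\Q)$ with $\widetilde H_{n-1}(\Dn;\Q)$ via Proposition~\ref{prop:topweight} and Corollary~\ref{cor_dual_complex}, use Theorems~\ref{thm:contractible} and~\ref{thm:boundary} to get the wedge-of-spheres picture and the cellular basis indexed by the $(n-1)!/2$ marked $n$-cycles, and then pair against torus classes using the residue integral formula of \S\ref{sec:torus}. The only substantive difference is organizational: the paper invokes Proposition~\ref{prop:cyclesandboundaries}(ii) to \emph{produce} closed logarithmic forms $\omega_\Gmw$ whose residues are prescribed cellular cycles, and then checks the pairing values directly, whereas you argue more abstractly that the Proposition~\ref{prop:topweight} isomorphism \emph{is} the residue map and reduce to the top-degree cellular Kronecker pairing (which is well defined because $\delta_{\Gmw_\tau}$ lives in top cochain degree and is automatically a cocycle). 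These are two phrasings of the same argument, and the closing dimension count is not actually needed once the pairing has been computed.
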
 \addtocounter{theorem}{-1} }

\begin{proof} The proof of Theorem~\ref{thm:boundary} shows that the tropical moduli space $\Dn$ has a contractible subcomplex $\Delta_{1,n}^{\mathrm{rep}}$ whose complement is the union of the $(n-1)!/2$ open cells of dimension $n-1$ corresponding to graphs $\Gmw\in J_{1,n}$ in the set $\mathcal L$ of loops with $n$ vertices labeled by distinct elements of $\{1, \ldots, n \}$.  Contracting this subcomplex gives a homotopy equivalence onto a bouquet of $(n-1) ! / 2$ spheres indexed by $\mathcal L$.  Let $\{e_\Gmw\}_{\Gmw \in \mathcal L}$ be the natural basis for $H_{n-1}(\Delta_{1,n}/\Delta_{1,n}^{\mathrm{rep}}; \Q)$ induced by a choice of orientation on each of these spheres (e.g. by making a choice depending on the cyclic ordering of the markings on $\Gmw$), with $e'_\Gmw$ the corresponding basis for $H_{n-1}(\Delta_{1,n}, \Q)$.

Suppose $F$ is a cellular $(n-1)$-chain on $\Delta_{1,n}$ whose homology class is $[F] = e'_\Gmw$.  Then the oriented face of $\Delta_{1,n}$ corresponding to $\Gmw$ appears in $F$ with multiplicity 1, and the faces corresponding to other loops with $n$ distinct labeled vertices appear with coefficient zero.  Therefore, by Proposition~\ref{prop:cyclesandboundaries}, the natural isomorphism $H_{n-1}(\Dn; \Q) \xrightarrow{\sim} \Gr_{2n}^W H^{n}(\cM_{1,n}; \Q)$ maps $e'_\Gmw$ to the class of a differential form $[\omega_\Gmw]$ such that $\int_{[\mathfrak t_\Gmw]} \omega_\Gmw = (2 \pi \sqrt{-1})^n$ and $\int_{[\mathfrak t_{\Gmw'}]} \omega_\Gmw = 0$ for $\Gmw' \neq \Gmw$ in $\mathcal L$.  In particular, the classes $\{ [\omega_\Gmw]/(2 \pi \sqrt{-1})^n \}_{\Gmw \in \mathcal L}$ form a basis for the top weight cohomology $\Gr_{2n}^W H^n(\cM_{1,n}; \Q)$ and the classes $\{[\mathfrak t_\Gmw]\}_{\Gmw \in \mathcal L}$ are a dual basis.
\end{proof}

\bigskip

\bigskip

\appendix
\section{Logarithmic forms, residues, and weight filtrations for DM stacks}

In this appendix, we briefly review logarithmic forms and weight filtrations in the context of simple normal crossings partial compactifications of smooth varieties, following standard constructions from \cite{Deligne71, KulikovKurchanov98, Voisin02b}.  Using the language of generalized $\Delta$-complexes, the constructions naturally generalize to the case of (not necessarily simple) normal crossings divisors from which it is easy to generalize to DM stacks.  We include, in particular, proofs of Propositions~\ref{prop:topweight} and \ref{prop:cyclesandboundaries}.  For additional background, details, and references regarding the extensions of de Rham cohomology and mixed Hodge structures to stacks, see \cite{Behrend04} and \cite{Teleman98, Dhillon06}, respectively.

\subsection{Logarithmic forms and weight filtrations} \label{app:logforms}

Let $V$ be a smooth variety of dimension $d$, let $D \subset V$ be a divisor with simple normal crossings, and let $U = V \smallsetminus D$, with $\iota\col U \hookrightarrow V$ the open inclusion.  We shall later require $V$ to be compact but for now it need not be.  We shall write $\mathcal{A}_U^q$ for the sheaf of complex-valued smooth (i.e.,\ $C^\infty$) differential $q$-forms on $U$ and recall that $\mathcal{A}_U^\bullet$ with the exterior differential forms a resolution of the constant sheaf $\C$ by acyclic sheaves.  In particular the cohomology of the induced complex of global sections
\[ 0 \rightarrow \Gamma(U, \mathcal A^0) \rightarrow \cdots \rightarrow \Gamma(U, \mathcal A^{2d}) \rightarrow 0
\] is canonically identified with the (sheaf cohomology and hence the) singular cohomology $H^*(U, \C)$.

There are subsheaves $W_m \mathcal{A}^q_V(\log D) \subset \iota_*\mathcal{A}_U^q$ on $V$ defined by the following local condition.  If $z_1, \dots, z_d$ are local holomorphic coordinates on an open subset of $V$, in which $D$ is defined by $z_1 \cdots z_k = 0$, then we require the $q$-form in these local coordinates to be of the form
\begin{equation*}
  \sum_I \omega_I \wedge \frac{dz_{i_1}}{z_{i_1}} \wedge \dots \wedge \frac{dz_{i_m}}{z_{i_m}},
\end{equation*}
where the sum is over all multi-indices $ I = (1 \leq i_1 < \dots < i_m \leq d)$ and the $\omega_I$ are sections of $\mathcal{A}^{q-m}_V$.  Adding over all $q$ gives subsheaves $W_m \mathcal{A}_V^\bullet(\log D) \subset \iota_* \mathcal{A}_U^\bullet$ which are closed under exterior derivative and wedge product with sections of $\mathcal{A}_V^\bullet$.  We have $W_m \mathcal{A}^q_V(\log D) \subset W_{m+1}\mathcal{A}^q_V(\log D)$ and we write $\mathcal{A}_V^q(\log D) = W_q \mathcal{A}_V^q(\log D)$ for the union.

It is easy to see that the inclusion $\mathcal{A}_V^\bullet(\log D) \hookrightarrow \iota_*\mathcal{A}_U$ induces a quasi-isomorphism on stalks, and since each $\mathcal{A}_V^q(\log D)$ is acyclic and $\mathcal{A}_U$ is $\iota_*$-acyclic (see, e.g. \cite[\S4.3]{KulikovKurchanov98}), it induces a quasi-isomorphism on global sections.  Hence the cohomology of the complex
\[ 0 \rightarrow \Gamma(V, \mathcal A^0_V(\log D)) \rightarrow \cdots \rightarrow \Gamma(V, \mathcal A^{2d}_V(\log D)) \rightarrow 0
\] is canonically identified with $H^*(U; \C)$.

The filtration of $\mathcal{A}^\bullet_V(\log D)$ by the subsheaves $W_m \mathcal{A}^\bullet(\log D)$ then gives rise to a filtration on $H^q(U;\C)$ in which $W_{m+q}H^q(U;\C)$ is the image of the $q$th cohomology group of $\Gamma(V,W_m \mathcal{A}^\bullet_V(\log D))$, i.e.,\ it consists of the cohomology classes which admit de Rham representatives which are global sections of $W_m\mathcal{A}^\bullet(\log D)$.  We shall write $\Gr^W H^\bullet(U;\C)$ for the associated graded object, i.e.,\ $\Gr^W_{m+q} H^q(U;\C) = W_{m+q} H^q(U;\C))/(W_{m+q-1}H^q(U;\C))$.  We may also pass to associated graded before taking cohomology and we shall write $\Gr^W_m \Gamma(V,\mathcal{A}_V^\bullet(\log D)) = \Gamma(V,W_m \mathcal{A}_V^\bullet(\log D)) / \Gamma(V,W_{m-1} \mathcal{A}_V^\bullet(\log D))$.  As usual the filtered chain complex gives rise to a spectral sequence, which in this case has
\begin{align*}
  E_\infty^{-p,q} & = \Gr^W_q H^{q-p}(U;\C)\\
  E_0^{-p,q} &= \Gr^W_p \Gamma(V,\mathcal{A}_V^{q-p}(\log D)),
\end{align*}
concentrated in the region $0 \leq p \leq d = \mathrm{dim}_\C(V)$ and $p \leq q \leq 2d + p$.  The differentials have bidegrees $d_r: E_r^{-p,q} \to E_r^{-p+r,q-r+1}$ and $d_0$ is induced by exterior derivation $d: \mathcal{A}_V^{q-p}(\log D) \to \mathcal{A}_V^{q+1-p}(\log D)$.  This spectral sequence is called the \emph{weight spectral sequence} and it converges because the filtration has finitely many steps.

\subsection{Residues and the weight spectral sequence}  \label{app:residues}

The $E_1$ page of the weight spectral sequence may be described explicitly using the \emph{Poincar\'e residue} homomorphism, which gives an isomorphism from $E_1^{-p,q}$ to the singular cohomology of the normalization of the closure of the codimension $p$ stratum $D^{(p)} \subset D$.  In the literature this is commonly defined in the case where $D \subset V$ has simple normal crossings and moreover there is chosen an ordering $D_1, \dots, D_r$ of its irreducible components.  Using the language of generalized $\Delta$-complexes, the definition is easily generalized to arbitrary normal crossing divisors provided we keep track of the monodromy.  We briefly give the definitions.

Recall that as part of our definition of $\Delta(D)$ as a generalized $\Delta$-complex in Section~\ref{sec:dual-compl-norm} we defined for each $p \geq 0$ a smooth variety $\FFF{p} \to D \subset V$, where $\FFF{0} \to D$ is the normalization and the closed points of $\FFF{p}$ are the $(p+1)$-tuples of distinct closed points in $\FFF{0}$ which lie over the same point in $D$.  The symmetric group $S_p$ acts freely on $\FFF{p-1}$ by permuting coordinates and the quotient $\FFF{p-1}/S_p$ is a smooth variety immersing into $V$.  In fact $\FFF{p-1}/S_p$ may be interpreted as the normalization of the closure of the codimension $p$ stratum $D^{(p)} \subset V$.  We shall write $\Z^\mathrm{sign}$ for the local system on $\FFF{p-1}/S_p$ corresponding to the sign representation $S_p \to \{\pm 1\}$ and similarly $\C^\mathrm{sign}$.  The sheaf $\mathcal{A}^{\bullet}_{\FFF{p-1}/S_p} \otimes_\C \C^\mathrm{sign}$ is then the sheaf of smooth differential forms on $\FFF{p-1}/S_p$ with values in the flat vector bundle $\C^\mathrm{sign}$.

The \emph{Poincar\'e residue} is a homomorphism
\begin{equation}\label{eq:7}
  \begin{aligned}
    E_0^{-p,q} \xrightarrow{\,\mathrm{res}\,} {}& {} (\Gamma(\FFF{p-1},
    \mathcal{A}^{q-2p}_{\FFF{p-1}}) \otimes_\C \C^\mathrm{sign})^{S_p}\\
    ={} & {} \Gamma(\FFF{p-1}/S_p, \mathcal{A}^{q-2p}_{\FFF{p-1}/S_p}
    \otimes_\C \C^\mathrm{sign}),
  \end{aligned}
\end{equation}
under which the differential $d_0$ in the spectral sequence corresponds to the exterior derivative in the de Rham complex of $\FFF{p-1}$, defined as follows.  If $z_1, \dots, z_d$ are local holomorphic coordinates on an open subset of $V$ in which $D$ is defined by $z_1 \cdots z_k = 0$ then each equation $z_i = 0$ determines a smooth branch of $D$ which lifts canonically to an open subset of $F([0])$.  Given distinct indices $i_0, \dots, i_{p-1} \in \{1, \dots, k\}$ we then obtain $p$ open subsets of $F([0])$ whose fiber product over $V$ defines an open subset $D_{(z_{i_0}, \dots, z_{i_{p-1}})} \subset \FFF{p-1}$ injecting to the subset of $D$ defined by the equation $z_{i_0} \cdots z_{i_{p-1}} = 0$.  By varying over local coordinates $(z_1, \dots, z_d)$ in which $D$ is defined by $z_1 \cdots z_k = 0$, the resulting open subsets $D_{(z_{i_0}, \dots, z_{i_{p-1}})}$ cover $\FFF{p-1}$.  Any weight $p$ differential $(q-p)$-form $\omega \in W_p\Gamma(V;\mathcal{A}_V^{q-p}(\log D))$ may locally be written as
\begin{equation*}
  \eta_{(i_0, \dots, i_{p-1})} \wedge \frac{dz_{i_0}}{z_{i_0}} \wedge \dots \wedge \frac{dz_{i_{p-1}}}{z_{i_{p-1}}} + \eta' + \sum_J \eta_J \frac{dz_J}{z_J},
\end{equation*}
where $\eta'$ has weight $p-1$ and the sum is over multiindices $J = (1 \leq j_0 < \dots < j_{p-1} \leq k)$ that are \emph{not} permutations of $(i_0, \dots, i_{p-1})$.  The residue $\mathrm{res}(\omega) \in \Gamma(\FFF{p-1}, \mathcal{A}^{q-2p}_{\FFF{p-1}})$ may be defined by the local formula
\begin{equation*}
  \mathrm{res}(\omega) \vert_{D_{(z_{i_0}, \dots, z_{i_{p-1}})}} = \eta_{(i_0, \dots, i_{p-1})} \vert_{D_{(z_{i_0}, \dots, z_{i_{p-1}})}},
\end{equation*}
which is easily seen to be well defined and to give an $S_p$ invariant element of $\Gamma(\FFF{p-1}, \mathcal{A}^{q-2p}_{\FFF{p-1}}) \otimes_\C \C^\mathrm{sign}$.

Just as in the case of simple normal crossings, the residue homomorphism~(\ref{eq:7}) sends the $d_0$ differential in the spectral sequence to the exterior derivative of forms, and is in fact a quasi-isomorphism.  Hence it induces an isomorphism
\begin{equation}\label{eq:6}
  \begin{aligned}
    E_1^{-p,q} \xrightarrow{\,\mathrm{res}\,} {} & {} (H^{q-2p}(\FFF{p-1};\C)
    \otimes \Z^\mathrm{sign})^{S_p}\\  ={} & {} H^{q-2p}(\FFF{p-1}/S_{p};
    \C^\mathrm{sign}).
  \end{aligned}
\end{equation}
Since $\dim_\R(\FFF{p-1}/S_p) = \dim_\R(V) - 2p = 2d - 2p$ we see that $E_1^{-p,q} = 0$ for $q > 2d$ so there are no differentials coming into the row $E_r^{-*,2d}$ for $r \geq 2$.  Hence we obtain an edge homomorphism
\begin{equation*}
  H^{2d-p}(U;\C) \twoheadrightarrow \Gr_{2d}^W H^{2d-p}(U;\C) = E_\infty^{-p,2d} \hookrightarrow E_2^{-p,2d}.
\end{equation*}

Assuming now in addition that $V$ is compact we can say more about the row $E_2^{*,2d}$ and in fact the whole spectral sequence.  Indeed, if $V$ is compact then the smooth variety $\FFF{p-1}$ is also compact, and hence we have Poincar\'e duality isomorphisms
\begin{equation*}
  E_1^{-p,q} = H^{q-2p}(\FFF{p-1}/S_{p}; \C^\mathrm{sign}) \cong H_{2d-q}(\FFF{p-1}/S_p;\C^\mathrm{sign}).
\end{equation*}
In particular for $q=2d$ we have 
\begin{equation*}
  E_1^{-p,2d} = H^{2d-2p}(\FFF{p-1}/S_{p}; \C^\mathrm{sign}) \cong H_0(\FFF{p-1}/S_p;\C^\mathrm{sign}) = \widetilde C_{p-1}(\Delta(D);\C)
\end{equation*}
where $C_{p-1}(\Delta(D);\C)$ is the cellular chains of the generalized $\Delta$-complex $\Delta(D)$, as defined in Section~\ref{sec:cellular-chains}.  A diagram chase shows that the isomorphism sends the differential $d_1$ in the spectral sequence to the boundary map in the cellular chains, and hence induces an isomorphism
\begin{equation*}
  E_2^{-p,2d} \cong \widetilde H_{p-1}(\Delta(D);\C).
\end{equation*}
Finally, the Hodge filtration of $\mathcal{A}_V^\bullet(\log D)$ gives a mixed Hodge structure on the entire spectral sequence, and when $V$ is compact the rows of the $E_1$ page are pure of different weight.  Hence the same is true for the $E_2$ page, and since no differential can exist between pure Hodge structures of different weight the spectral sequence must collapse on the $E_2$ page.  Hence in this case the edge homomorphism
\begin{equation*}
  \Gr_{2d}^W H^{2d-p}(U;\C) = E_\infty^{-p,2d} \hookrightarrow E_2^{-p,2d} = \widetilde H_{p-1}(\Delta(D);\C)
\end{equation*}
is an isomorphism, proving Proposition~\ref{prop:topweight}.

The edge homomorphism on the $E_1$ page of the spectral sequence described above is induced by an explicit composition
\begin{equation*}
  W_p \Gamma(V, \mathcal{A}_V^{2d-p}(\log D)) \twoheadrightarrow E_0^{-p,2d} \xrightarrow{\,\mathrm{res}\,} \Gamma(\FFF{p-1};\mathcal{A}^{2d-2p}) \xrightarrow{\,\int\,} \widetilde C_{p-1}(\Delta(D);\C),
\end{equation*}
where $\int$ is the following homomorphism.  Recall that $C_{p-1}(\Delta(D);\C)$ is spanned over $\C$ by symbols $[\cY]$, where $\cY \subset \FFF{p-1}$ runs through path components, subject to the relation $[\sigma \cY] = \mathrm{sgn}(\sigma) [\cY]$ for $\sigma \in S_p$.  Then $\int$ sends a top degree differential form $\eta$ on $\FFF{p-1}$ the chain $\frac1{p!} \sum_\cY (\int_\cY \eta)[\cY] \in C_{p-1}(\Delta(D);\C)$, where the sum is over path components $\cY \subset \FFF{p-1}$.  For a weight $p$ form $\omega \in \Gamma(V, \mathcal{A}_V^{2d-p}(\log D))$ we can view $\mathrm{res}(\omega)$ as a top differential form on the stratum $D^{(p)} \subset D$ with values in the flat vector bundle $\C^\mathrm{sign}$, extending to a form on the normalization of the closure of this stratum.  Hence we shall also write $\omega \mapsto \int_{D^{(p)}} \res(\omega) \in C_{p-1}(\Delta(D);\C)$ for this composition.

\subsection{De Rham cohomology of smooth DM stacks}

As remarked earlier, the definition of $\Delta(D)$ as a generalized $\Delta$ complex applies equally well in the case of a normal crossings divisor $\mathcal{D}$  in a proper DM stack $\ocX$.  The weight spectral sequence and the residue integral isomorphism from $\Gr^W_p H^{q-p}(\mathcal{X};\C)$ to $\widetilde H_{p-1}(\Delta(\mathcal{D});\C)$ are easily generalized to this setting.

Recall that the de Rham complex of $\ocX$ is defined using an etale atlas $V \to \ocX$ as the equalizer
\begin{equation*}
  \Gamma(\ocX, \mathcal{A}^\bullet_{\ocX}) \to \Gamma(V, \mathcal{A}^\bullet_V) \double \Gamma(V \times_{\ocX} V, \mathcal{A}^\bullet_{V \times_{\ocX} V})
\end{equation*}
and similarly for $\mathcal{A}^\bullet_{\mathcal{X}}$ and the subsheaves $W_m \mathcal{A}^\bullet_{\ocX}(\log \mathcal{D}) \subset \mathcal{A}^\bullet_{\ocX})$.

Just as in the special case of a variety $\overline X$ with a simple normal crossing divisor $D$, discussed above, the sheaves $\mathcal A^\bullet_{\ocX}(\log \mathcal D)$ are acyclic, the inclusion $\mathcal A^\bullet_{\ocX}(\log D) \subset \iota_* \mathcal A^\bullet(\cX)$ is a quasi-isomorphism, and $\mathcal A^\bullet (\cX)$ is $\iota_*$-acyclic, where $\iota\col \cX \hookrightarrow \ocX$ is the open immersion.  Therefore, the cohomology of the complex
\[ 0 \rightarrow \Gamma (\ocX, \mathcal A^0_{\ocX} (\log \cD)) \rightarrow \cdots \rightarrow \Gamma (\ocX, \mathcal A^{2d}_{\ocX}(\log \cD)) \rightarrow 0
\] is naturally identified with $H^*(\cX, \C)$.

With these definitions, the construction and properties of the weight spectral sequence carries through verbatim to the case of DM stacks.  To phrase the result in terms of residue integrals, let us also recall that the integral of a compactly supported top degree differential form $\omega$ on a global quotient stack $V = [U/G]$, is defined as $\int_V \omega = \frac1{|G|} \int_U p^* \omega$, where $p: U \to V$ is the quotient map.  On stacks that are not global quotients the integral is defined using partitions of unity.  If $V$ is a smooth proper DM stack of complex dimension $d$, possibly with several components, the integral then gives an isomorphism $H^{2d}_{\mathrm{dR}}(V;\C) \to H_0(V;\C)$, just as in the case of varieties.

As for varieties, the spectral sequence again collapses at the $E_2$ page and the Poincar\'e residue gives an isomorphism $E_1^{-p,2d} \to \widetilde C_{p-1}(\Delta(\cD);\C)$, assuming of course that $\ocX$ is proper.  We use this to prove Proposition \ref{prop:cyclesandboundaries}, as follows. The isomorphism
\begin{align*}
  E_0^{-p,2d} & = \Gr^W_p \Gamma(\ocX, \cA^{2d-p}_{\ocX}(\log \cD))\\
  &= W_p \Gamma(\ocX, \cA^{2d-p}_{\ocX}(\log \cD))
    / W_{p-1} \Gamma(\ocX, \cA^{2d-p}_{\ocX}(\log \cD))
\end{align*}
and the fact that $E_1$ is the cohomology of $E_0$, we see that any chain
$F \in \widetilde C_{p-1}(\Delta(\cD);\C)$ is of the form
$\int_{\cD^{(p)}} \mathrm{res}(\omega)$ for some $\omega$ which represents a class in $E_1^{-p,2d}$, i.e.,\ $\omega$ has weight $p$ but $d\omega$ has weight $p-1$.  This proves (i).  The fact that the residue integral induces an isomorphism $E_1^{-p,2d} \cong \widetilde C_{p-1}(\Delta(\cD);\C)$ easily implies (iii).  Claim (ii) is slightly more subtle; it may be deduced from the collapse of the spectral sequence, as follows.  Let $F \in \widetilde C_{p-1}(\Delta(\cD);\C)$ be a cycle and write $F = \int_{\cD^{(p)}} \mathrm{res}(\omega)$ for some $\omega$ with  weight $p$ and $d\omega$ weight $p-1$.  Then the class $[\omega] \in E_1$ survives to $E_2$ because $F$ is a cycle, by the chain isomorphism $E_1^{-p,2d} \cong \widetilde C_{p-1}(\Delta(\cD);\C)$.  By construction of the spectral sequence associated to a filtered cochain complex, the class $[\omega]$ survives to $E_2$ if and only if there is a representative differential form $\omega$ such that $d\omega$ has weight $p-2$.  Similarly, if we assume inductively that $[\omega]$ survives to represent a class on the $E_r$ page, then the differential $d^r[\omega]$ is the obstruction to choosing a representative differential form with $d\omega$ having weight $p-(r+1)$.  Therefore the collapse of the spectral sequence implies that any $[\omega]$ which survives to $E_2$ survives to $E_\infty$, and hence that if $F \in \widetilde C_{p-1}(\Delta(\cD);\C)$ is a cycle, there is no obstruction to finding a \emph{closed} $\omega$ of weight $p$ with $F = \int_{\cD^{(p)}} \mathrm{res}(\omega)$, proving (ii).

\subsection{Top weight cohomology with rational coefficients} \label{app:topweight} We now give a second proof of Proposition~\ref{prop:topweight}, identifying the reduced rational homology of the boundary complex with the top weight cohomology of the stack, which does not involve logarithmic forms and residues.  Instead, we closely follow the proof for varieties given in \cite[Theorem~4.4]{boundarycx}.  The one additional fact needed is that the cohomology of smooth DM stack $\cY$ with projective coarse moduli space $Y$ is pure.  To see this, note that the natural map $\cY \to Y$ induces an isomorphism $H^*(Y;\Q) \to H^*(\mathcal{Y};\Q)$ (see \cite{Behrend04} or \cite[Theorem~4.40]{Edidin13}) and, since $Y$ is a K\"ahler $V$-manifold, its cohomology is pure \cite[Theorem~2.43]{PetersSteenbrink08}.

\begin{proof}[Proof of Proposition~\ref{prop:topweight}]
After a finite sequence of blow-ups, we may assume that the irreducible components $\cD_1,\ldots,\cD_r$ of $\cD$ are smooth.  Then there is a simplicial stack whose $(k-1)$-simplices are given by the disjoint union
\begin{equation}\label{eq:1}
\coprod_{1 \leq i_0 \leq \dots \leq i_{k-1} \leq r} \cD_{i_0} \times_{\ocX} \dots \times_{\ocX} \cD_{i_{k-1}}.
\end{equation}
The resulting spectral sequence converges to $H^*(\cD;\Q)$ and has $E_1^{p,q}$ given by $H^q(-;\Q)$ of~\eqref{eq:1}.  See \cite[5.2.1.1]{Deligne74b} for more details on this spectral sequence. The entire spectral sequence preserves the weight filtration, and we obtain a spectral sequence converging to $W_0H^*(\cD;\Q)$ whose $E_1^{p,q}$ is given by $W_0H^q(-;\Q)$ of~\eqref{eq:1}.  Then, since each $\cD_{i_0} \times_{\ocX} \dots \times_{\ocX} \cD_{i_p}$ in~\eqref{eq:1} is a smooth and proper DM stack with projective coarse moduli space, its mixed Hodge structure is pure, as noted above.  In particular, the weight zero cohomology of $\cD$ is concentrated in degree zero.  The spectral sequence is therefore concentrated on the line $E^1_{*,0}$, where it is isomorphic to the cellular chain complex for $\Delta(\cD)$.  This proves
\begin{equation*}
      W_0 H^k(\cD; \Q) \cong H^k(\Delta(\cD); \Q).
\end{equation*} The long exact pair sequence for $(\ov \cX, \cD)$ and Poincar\'e duality identify $W_0 H^k(\cD; \Q)$ with $\Gr_{2n}^W H^{2n-k-1}(\cX; \Q)$, and the theorem follows.
\end{proof}

\subsection{Rational homology of $\|\Delta(D_\bullet)\|$}\label{sec:simplicialproof}  

Finally, we prove Proposition~\ref{p:simplicial}, that for a normal crossings compactification $D\subset X$, the natural map $\|\Delta(D_\bullet)\| \rightarrow |\Delta(D)|$ induces an isomorphism in rational homology, where as $\|\Delta(D_\bullet)\|$ is as discussed in \S\ref{sec:simplicialinvariance}.

\begin{proof}
We shall study the simplicial abelian group $[q] \mapsto C_p(\Delta(D_q);\Q)$ for each fixed $p$.  There is an associated chain complex
\begin{equation*}
    \dots \to C_p(\Delta(D_q);\Q) \xrightarrow{\partial}
    C_p(\Delta(D_{q-1});\Q) \to
    \dots \to C_p(\Delta(D_0);\Q) \to C_p(\Delta(D);\Q) \to 0,
\end{equation*}
in which $\partial = \sum_{i=0}^q (-1)^i (d_i)_*$.  It suffices to prove that this chain complex is acyclic for each $p$ and in fact we shall construct an explicit chain contraction.

We shall write $V_{-1} = X$ and $d_0: V=V_0 \to V_{-1} = X$ and similarly for $D_{-1} = D$.  We will use the fact that $d_0: V_{q} \to V_{q-1}$ is an \'etale map and that $D_{q} = V_{q} \times_{V_{q-1}} D_{q-1}$ to produce ``transfer'' maps
\begin{equation*}
    \tau: C_p(\Delta(D_{q-1});\Q) \to C_p(\Delta(D_{q});\Q)
\end{equation*}
for all $q \geq 0$ with the properties that $d_0 \circ \tau = \mathrm{id}$ and $d_i \circ \tau = \tau \circ d_{i-1}$ for $i = 1, \dots, q+1$.  This implies $\partial \circ \tau + \tau \circ \partial = \mathrm{id}$ giving the required chain contraction.

To construct this operation $\tau$, let $E \to D$ be the normalization and let $$\FFF{p} \subset E \times_X \dots \times_X E$$ be as in \S\ref{sec:dual-compl-norm}, with the symmetric groups $S_{p+1}$ acting by permuting the coordinates. Recall that the $\Q$ vector space $C_p(\Delta(D);\Q)$ is generated by symbols $[C]$ where $C \subset \FFF{p}$ is an irreducible component, subject to the relations $[\sigma C] = \mathrm{sgn}(\sigma)[C]$ for $\sigma \in S_{p+1}$. Similarly we have normalizations $E_q \to D_q$ and subvarieties $\FFFF{q}{p} \subset E_q \times_{V_q} \dots \times_{V_q} E_q$ of the $(p+1)$-fold fiber product and $C_p(\Delta(D_q);\Q)$ is generated by symbols $[C]$ where $C \subset \FFFF{q}{p}$ is an irreducible component.  By the same argument as in the proof of Lemma~\ref{lemma:etale-descent}, we have a pullback diagram
\begin{equation*}
    \xymatrix{
      \FFFF{q}{p} \ar[r] \ar[d]_{d_0} & V_q \ar[d]^{d_0}\\
      \FFFF{q-1}{p} \ar[r] & V_{q-1},
    }
\end{equation*}
and in particular $\FFFF{q}{p} \to \FFFF{q-1}{p}$ is an \'etale map between smooth (but likely disconnected) varieties.  For each component $C \subset \FFFF{q-1}{p}$ the inverse image in $\FFFF{q}{p}$ is a disjoint union of smooth components $C'_1, \dots, C'_r$ and to each of them we attach the number $m_j \in \Z_{> 0}$ defined as the degree of the field extension $\kappa(C) \subset \kappa(C'_j)$ and let $w_j = m_j/(\sum_k m_k)$.  We then define $\tau: C_p(\Delta(D_{q-1});\Q) \to C_p(\Delta(D_q);\Q)$ by sending
$[C]$ to $\sum w_j [C'_j]$.

It is then clear that $d_0 \tau([C]) = d_0 (\sum w_i [C'_i]) = \sum w_i [C] = [C]$ and it remains to see that $d_i \tau([C]) = \tau (d_{i-1}[C])$ for $i > 0$, which may be deduced from the pullback diagram
\begin{equation*}
      \xymatrix{
        \FFFF{q}{p} \ar[r]^{d_i} \ar[d]_{d_0} & \FFFF{q-1}{p} \ar[d]^{d_0}\\
        \FFFF{q-1}{p} \ar[r]_{d_{i-1}} & \FFFF{q-2}{p}.  }
\end{equation*}
Indeed, if the inverse image of $C'' = d_{i-1}(C) \subset \FFFF{q-2}{p}$ is the disjoint union of components $C'_1, \dots, C'_r \subset \FFFF{q-1}{p}$ then the inverse image of $C$ itself is the disjoint union of $C \times_{C''} C'_1, \dots, C \times_{C''} C'_1 \subset \FFFF{q}{p}$. Then $\tau([C])$ is a weighted average of the components of $\coprod_i C \times_{C''} C'_i$ and $d_i \tau([C])$ is a weighted average of their images in $\FFFF{q-1}{p}$ which is a weighted average the $C'_i$.  The class $\tau(d_{i-1})([C]) = \tau(C'')$ is also a weighted average of the $C'_i$ and it remains to see that the weights agree.  This follows from the fact that the ring $\kappa(C) \otimes_{\kappa(C'')} \kappa(C'_i)$ splits as a product of field extensions of $\kappa(C)$ whose degrees add up to the degree of the field extension $\kappa(C'') \to \kappa(C'_i)$.
\end{proof}

\bibliographystyle{amsalpha}
\bibliography{math}

\providecommand{\bysame}{\leavevmode\hbox to3em{\hrulefill}\thinspace}
\providecommand{\MR}{\relax\ifhmode\unskip\space\fi MR }
\providecommand{\MRhref}[2]{%
  \href{http://www.ams.org/mathscinet-getitem?mr=#1}{#2}
}
\providecommand{\href}[2]{#2}
\begin{thebibliography}{dFKX17}

\bibitem[ABW13]{ABW13}
D.~Arapura, P.~Bakhtary, and J.~W{\l}odarczyk, \emph{Weights on cohomology,
  invariants of singularities, and dual complexes}, Math. Ann. \textbf{357}
  (2013), no.~2, 513--550.

\bibitem[ACG11]{ACG11}
E.~Arbarello, M.~Cornalba, and P.~Griffiths, \emph{Geometry of algebraic
  curves. {V}olume {II}}, Grundlehren der Mathematischen Wissenschaften, vol.
  268, Springer, Heidelberg, 2011.

\bibitem[ACP15]{acp}
D.~Abramovich, L.~Caporaso, and S.~Payne, \emph{The tropicalization of the
  moduli space of curves}, Ann. Sci. \'Ec. Norm. Sup\'er. (4) \textbf{48}
  (2015), no.~4, 765--809.

\bibitem[AF59]{AndreottiFrankel59}
A.~Andreotti and T.~Frankel, \emph{The {L}efschetz theorem on hyperplane
  sections}, Ann. of Math. (2) \textbf{69} (1959), 713--717.

\bibitem[Beh04]{Behrend04}
K.~Behrend, \emph{Cohomology of stacks}, Intersection theory and moduli, ICTP
  Lect. Notes, XIX, Abdus Salam Int. Cent. Theoret. Phys., Trieste, 2004,
  pp.~249--294.

\bibitem[BF07]{BrendleFarb07}
T.~Brendle and B.~Farb, \emph{The {B}irman-{C}raggs-{J}ohnson homomorphism and
  abelian cycles in the {T}orelli group}, Math. Ann. \textbf{338} (2007),
  no.~1, 33--53.

\bibitem[BMV11]{BrannettiMeloViviani11}
S.~Brannetti, M.~Melo, and F.~Viviani, \emph{On the tropical {T}orelli map},
  Adv. Math. \textbf{226} (2011), no.~3, 2546--2586.

\bibitem[Bol12]{Boldsen12}
S.~Boldsen, \emph{Improved homological stability for the mapping class group
  with integral or twisted coefficients}, Math. Z. \textbf{270} (2012),
  297--329.

\bibitem[BT07]{BergstromTommasi07}
J.~Bergstr{\"o}m and O.~Tommasi, \emph{The rational cohomology of
  {$\overline{\mathcal M}_4$}}, Math. Ann. \textbf{338} (2007), no.~1,
  207--239.

\bibitem[Cap13]{Caporaso13}
L.~Caporaso, \emph{Algebraic and tropical curves: comparing their moduli
  spaces}, Handbook of moduli. {V}ol. {I}, Adv. Lect. Math. (ALM), vol.~24,
  Int. Press, Somerville, MA, 2013, pp.~119--160.

\bibitem[CF12]{ChurchFarb12}
T.~Church and B.~Farb, \emph{Parameterized {A}bel-{J}acobi maps and abelian
  cycles in the {T}orelli group}, J. Topol. \textbf{5} (2012), no.~1, 15--38.

\bibitem[CF13]{ChurchFarb13}
\bysame, \emph{Representation theory and homological stability}, Adv. Math.
  \textbf{245} (2013), 250--314.

\bibitem[Cha15]{Chan15}
M.~Chan, \emph{Topology of the tropical moduli spaces ${M}_{2,n}$},
  arXiv:1507.03878, 2015.

\bibitem[CV86]{CullerVogtmann86}
M.~Culler and K.~Vogtmann, \emph{Moduli of graphs and automorphisms of free
  groups}, Invent. Math. \textbf{84} (1986), no.~1, 91--119.

\bibitem[Dan75]{Danilov75}
V.~Danilov, \emph{Polyhedra of schemes and algebraic varieties}, Mat. Sb.
  (N.S.) \textbf{139} (1975), no.~1, 146--158, 160.

\bibitem[Del71]{Deligne71}
P.~Deligne, \emph{Th\'eorie de {H}odge. {II}}, Inst. Hautes \'Etudes Sci. Publ.
  Math. (1971), no.~40, 5--57.

\bibitem[Del74]{Deligne74b}
\bysame, \emph{Th\'eorie de {H}odge. {III}}, Inst. Hautes \'Etudes Sci. Publ.
  Math. (1974), no.~44, 5--77.

\bibitem[dFKX17]{deFernexKollarXu17}
T.~de~Fernex, J.~Koll\'{a}r, and C.~Xu, \emph{The dual complex of
  singularities}, Higher dimensional algebraic geometry, Adv. Stud. Pure Math,
  vol.~74, 2017, In honor of Yujiro Kawamata's 60th birthday, pp.~103--130.

\bibitem[Dhi06]{Dhillon06}
A.~Dhillon, \emph{On the cohomology of moduli of vector bundles and the
  {T}amagawa number of {${\rm SL}_n$}}, Canad. J. Math. \textbf{58} (2006),
  no.~5, 1000--1025.

\bibitem[Edi13]{Edidin13}
D.~Edidin, \emph{Equivariant geometry and the cohomology of the moduli space of
  curves}, Handbook of moduli. {V}ol. {I}, Adv. Lect. Math. (ALM), vol.~24,
  Int. Press, Somerville, MA, 2013, pp.~259--292.

\bibitem[Gal11]{Galatius11}
S.~Galatius, \emph{Stable homology of automorphism groups of free groups}, Ann.
  of Math. (2) \textbf{173} (2011), no.~2, 705--768.

\bibitem[Get99]{Getzler99}
E.~Getzler, \emph{Resolving mixed {H}odge modules on configuration spaces},
  Duke Math. J. \textbf{96} (1999), no.~1, 175--203.

\bibitem[Hac08]{Hacking08}
P.~Hacking, \emph{The homology of tropical varieties}, Collect. Math.
  \textbf{59} (2008), no.~3, 263--273.

\bibitem[Har85]{Harer}
J.~Harer, \emph{Stability of the homology of the mapping class groups of
  orientable surfaces}, Ann. of Math. (2) \textbf{121} (1985), no.~2, 215--249.

\bibitem[Har86]{Harer86}
\bysame, \emph{The virtual cohomological dimension of the mapping class group
  of an orientable surface}, Invent. Math. \textbf{84} (1986), no.~1, 157--176.

\bibitem[Hat02]{Hatcher02}
A.~Hatcher, \emph{Algebraic topology}, Cambridge University Press, Cambridge,
  2002.

\bibitem[HV98]{HatcherVogtmann98}
A.~Hatcher and K.~Vogtmann, \emph{Cerf theory for graphs}, J. London Math. Soc.
  (2) \textbf{58} (1998), no.~3, 633--655.

\bibitem[HW10]{HatcherWahl10}
A.~Hatcher and N.~Wahl, \emph{Stabilization for mapping class groups of
  3-manifolds}, Duke Math. J. \textbf{155} (2010), no.~2, 205--269.

\bibitem[Iva93]{Ivanov93}
N.~Ivanov, \emph{On the homology stability for {T}eichm\"uller modular groups:
  closed surfaces and twisted coefficients}, Mapping class groups and moduli
  spaces of {R}iemann surfaces ({G}\"ottingen, 1991/{S}eattle, {WA}, 1991),
  Contemp. Math., vol. 150, Amer. Math. Soc., Providence, RI, 1993,
  pp.~149--194.

\bibitem[JR11]{JimenezRolland11}
R.~Jim\'{e}nez~Rolland, \emph{Representation stability for the cohomology of
  the moduli space {$\mathcal M^n_g$}}, Algebr. Geom. Topol. \textbf{11}
  (2011), no.~5, 3011--3041.

\bibitem[Kar77]{Karcjauskas77}
K.~Kar{\v{c}}jauskas, \emph{A generalized {L}efschetz theorem}, Funkcional.
  Anal. i Prilo\v zen. \textbf{11} (1977), no.~4, 80--81.

\bibitem[KK98]{KulikovKurchanov98}
V.~Kulikov and P.~Kurchanov, \emph{Complex algebraic varieties: periods of
  integrals and {H}odge structures}, Algebraic geometry, {III}, Encyclopaedia
  Math. Sci., vol.~36, Springer, Berlin, 1998, pp.~1--217, 263--270.

\bibitem[KK14]{KapovichKollar14}
M.~Kapovich and J.~Koll{\'a}r, \emph{Fundamental groups of links of isolated
  singularities}, J. Amer. Math. Soc. \textbf{27} (2014), no.~4, 929--952.

\bibitem[Kol13]{Kollar13}
J.~Koll\'{a}r, \emph{Links of complex analytic singularities}, Surveys in
  {D}ifferential {G}eometry, vol.~18, Int. Press, Somerville, MA, 2013,
  pp.~157--193.

\bibitem[KS09]{KorkmazStipsicz09}
M.~Korkmaz and A.~Stipsicz, \emph{Lefschetz fibrations on 4-manifolds},
  Handbook of {T}eichm\"uller theory. {V}ol. {II}, IRMA Lect. Math. Theor.
  Phys., vol.~13, Eur. Math. Soc., Z\"urich, 2009, pp.~271--296.

\bibitem[KX16]{KollarXu16}
J.~Koll\'ar and C.~Xu, \emph{The dual complex of {C}alabi-{Y}au pairs}, Invent.
  Math. \textbf{205} (2016), no.~3, 527--557.

\bibitem[Loo93]{Looijenga93}
E.~Looijenga, \emph{Cohomology of {${\mathcal M}_3$} and {${\mathcal M}^1_3$}},
  Mapping class groups and moduli spaces of {R}iemann surfaces ({G}\"ottingen,
  1991/{S}eattle, {WA}, 1991), Contemp. Math., vol. 150, Amer. Math. Soc.,
  Providence, RI, 1993, pp.~205--228.

\bibitem[ML98]{MacLane98}
S.~Mac~Lane, \emph{Categories for the working mathematician}, second ed.,
  Graduate Texts in Mathematics, vol.~5, Springer-Verlag, New York, 1998.

\bibitem[MW07]{MadsenWeiss07}
I.~Madsen and M.~Weiss, \emph{The stable moduli space of {R}iemann surfaces:
  {M}umford's conjecture}, Ann. of Math. (2) \textbf{165} (2007), no.~3,
  843--941.

\bibitem[Pay13]{boundarycx}
S.~Payne, \emph{Boundary complexes and weight filtrations}, Michigan Math. J.
  \textbf{62} (2013), 293--322.

\bibitem[Pet14]{Petersen14}
D.~Petersen, \emph{The structure of the tautological ring in genus one}, Duke
  Math. J. \textbf{163} (2014), no.~4, 777--793.

\bibitem[PS08]{PetersSteenbrink08}
C.~Peters and J.~Steenbrink, \emph{Mixed {H}odge structures}, Ergebnisse der
  Mathematik und ihrer Grenzgebiete. 3. Folge., vol.~52, Springer-Verlag,
  Berlin, 2008.

\bibitem[RW96]{RobinsonWhitehouse96}
A.~Robinson and S.~Whitehouse, \emph{The tree representation of
  {$\Sigma_{n+1}$}}, J. Pure Appl. Algebra \textbf{111} (1996), no.~1-3,
  245--253.

\bibitem[Sim16]{Simpson16}
C.~Simpson, \emph{The dual boundary complex of the {$SL_2$} character variety
  of a punctured sphere}, Ann. Fac. Sci. Toulouse Math. (6) \textbf{25} (2016),
  no.~2-3, 317--361.

\bibitem[Ste06]{Stepanov06}
D.~Stepanov, \emph{A remark on the dual complex of a resolution of
  singularities}, Uspekhi Mat. Nauk \textbf{61} (2006), no.~1(367), 185--186.

\bibitem[Ste08]{Stepanov08}
\bysame, \emph{A note on resolution of rational and hypersurface
  singularities}, Proc. Amer. Math. Soc. \textbf{136} (2008), no.~8,
  2647--2654.

\bibitem[Tel98]{Teleman98}
C.~Teleman, \emph{Borel--{W}eil--{B}ott theory on the moduli stack of
  {$G$}-bundles over a curve}, Invent. Math. \textbf{134} (1998), no.~1, 1--57.

\bibitem[Thu07]{Thuillier07}
A.~Thuillier, \emph{G\'eom\'etrie toro\"\i dale et g\'eom\'etrie analytique non
  archim\'edienne}, Manuscripta Math. \textbf{123} (2007), no.~4, 381--451.

\bibitem[Tom07]{Tommasi07}
O.~Tommasi, \emph{Rational cohomology of {$\mathcal M_{3,2}$}}, Compos. Math.
  \textbf{143} (2007), no.~4, 986--1002.

\bibitem[Vog90]{Vogtmann90}
K.~Vogtmann, \emph{Local structure of some {${\rm Out}(F_n)$}-complexes}, Proc.
  Edinburgh Math. Soc. (2) \textbf{33} (1990), no.~3, 367--379.

\bibitem[Vog15]{Vogtmann15}
\bysame, \emph{On the geometry of outer space}, Bull. Amer. Math. Soc. (N.S.)
  \textbf{52} (2015), no.~1, 27--46.

\bibitem[Voi02]{Voisin02b}
C.~Voisin, \emph{Hodge theory and complex algebraic geometry. {I}}, Cambridge
  Studies in Advanced Mathematics, vol.~76, Cambridge University Press,
  Cambridge, 2002, Translated from the French original by Leila Schneps.

\end{thebibliography}

\end{document}